\numberwithin{equation}{section}
\newcommand{\cor}{\color{BrickRed}}
\newcommand{\D}{\mathrm{d}}
\newcommand{\I}{\mathrm{i}}
\newcommand{\scp}[2]{\langle #1,#2\rangle}
\let\O\relax
\newcommand{\O}[1]{\mathcal{O}\left(#1\right)}
\newcommand{\E}[1]{\mathds{E}\left[#1\right]}
\newcommand{\EL}[1]{\mathds{E}\left[#1\middle\vert\bm{\lambda}\right]}
\newcommand{\ELH}[1]{\mathds{E}\left[#1\middle\vert \bm{H_0},\bm{\lambda}\right]}
\newcommand{\unn}[2]{[\![#1,#2]\!]}
\let\Im\relax
\DeclareMathOperator{\Im}{Im}
\let\Re\relax
\DeclareMathOperator{\Re}{Re}
\DeclareMathOperator{\Tr}{Tr}
\DeclareMathOperator{\per}{per}
\newcommand{\specialcell}[2][c]{
  \begin{tabular}[#1]{@{}c@{}}#2\end{tabular}}
\newcommand{\mding}[1]{\text{\ding{#1}}}
\theoremstyle{plain}
\newtheorem{theorem}{Theorem}[section]
\newtheorem{lemma}[theorem]{Lemma}
\newtheorem{corollary}[theorem]{Corollary}
\newtheorem{proposition}[theorem]{Proposition}
\theoremstyle{definition}
\newtheorem{remark}[theorem]{Remark}
\newtheorem{definition}[theorem]{Definition}
\titleformat{\paragraph}[runin]{\bfseries\normalsize}{\theparagraph}{}{}
\titleformat{\subparagraph}[runin]{\itshape\normalsize}{\theparagraph}{0em}{}
\titleformat{\section}[block]{\normalfont\filcenter}{\Large\bf\thesection .}{.5em}{\Large\bf}
\titleformat{\subsection}[block]{\normalfont}{\large \bf \thesubsection .}{.5em}{\large\bf}
\begin{document}
\addtocontents{toc}{\protect\setcounter{tocdepth}{1}}
\title{\textbf{Fermionic eigenvector moment flow}}
\author{L. Benigni\\\vspace{-0.15cm}\footnotesize{\it{LPSM, Université Paris Diderot}}\\\footnotesize{\it{lbenigni@lpsm.paris}}}
\date{}
\maketitle
\begin{abstract}
\small{We exhibit new functions of the eigenvectors of the Dyson Brownian motion which follow an equation similar to the Bourgade-Yau eigenvector moment flow \cite{bourgade2017eigenvector}. These observables can be seen as a Fermionic counterpart to the original (Bosonic) ones. By analyzing both Fermionic and Bosonic observables, we obtain new correlations between eigenvectors: 
\begin{itemize}
\item[$(i)$]The fluctuations $\sum_{\alpha\in I}\vert u_k(\alpha)\vert ^2-{\vert I\vert}/{N}$ decorrelate for distinct eigenvectors as the dimension $N$ grows.
\item[$(ii)$]  An optimal estimate on the partial inner product $\sum_{\alpha\in I}u_k(\alpha)\overline{u_\ell}(\alpha)$ between two eigenvectors is given. 
\end{itemize}
These static results obtained by integrable dynamics are stated for generalized Wigner matrices and should apply to wide classes of mean field models.}
\end{abstract}

\tikzset{cross/.style={cross out, draw=black, minimum size=2*(#1-\pgflinewidth), inner sep=0pt, outer sep=0pt},cross/.default={1pt}}

{\hypersetup{linkcolor=black}\tableofcontents}

\section{Introduction}
Eigenvector statistics of large random matrices were studied extensively as they appear in numerous physical problems and models. In the past decade, a large amount of work was done to prove \emph{universality} of statistics of large random matrices in the sense that they only depend on the symmetry of the matrix but not on the actual entry distribution. While this universality phenomenon was first conjectured and proved for eigenvalue statistics, the same paradigm holds for eigenvectors. 

The integrable model of symmetric random matrices is given by the Gaussian Orthogonal Ensemble which can be seen as a distribution on the space of symmetric random matrices given by
\[
\mathds{P}_{\mathrm{GOE}}(\D H)
=
\frac{1}{Z_N}
\mathrm{e}^{-\frac{N}{4}\Tr H^2}\D H
\]
where $\D H$ is the product Lebesgue measure. Eigenvector statistics for this model are trivial as the distribution is invariant under orthogonal conjugation. The whole normalized eigenbasis is Haar distributed on the orthogonal group and each eigenvector is uniformly distributed on the sphere. Numerous properties of eigenvectors of this ensemble, denoted $u_k$, can be obtained from their exact distribution. Note that eigenvectors are well defined up to a sign (or a phase in the Hermitian case), to remove this ambiguity we can consider them multiplied by an independent centered $\pm 1$ random variable. One has the following probability bound for the extremal coordinate of an eigenvector: for every positive $D$ there exists $C$ such that 
\begin{equation}
\label{eq:integrdeloc}
\mathds{P}\left(
	\text{there exists }k\in\unn{1}{N},\,
	\Vert u_k\Vert_\infty
	\geqslant
	\sqrt{\frac{C\log N}{N}}
\right)\leqslant N^{-D}
\end{equation}
where $\unn{1}{N}\coloneqq\{1,2,\dots,N\}.$

One can also consider the asymptotic distribution of eigenvector entries. Since eigenvectors are uniformly distributed on the sphere and independent, for two sets of indices $I$ and $J$ of fixed cardinality, we have
\begin{equation}\label{eq:integrdist}
\left(
	\sqrt{N}u_k(\alpha)
\right)_{(k,\alpha)\in I\times J}
\xrightarrow[N\rightarrow\infty]{}
\left(
	\mathcal{N}_{k,\alpha}
\right)_{(k,\alpha)\in I\times J}
\end{equation}
where $\mathcal{N}_{k,\alpha}$ are independent standard random variables. Note that while the convergence is stated for a set of indices with fixed cardinality, one can make the set depend on the dimension of the matrix. Indeed, if one considers a single eigenvector, \cite{diaconis1987dozen} proved that the total variation distance between $o(N)$ entries and independent standard normal random variables vanishes. This was refined in \cite{jiang2006how} where the number of eigenvectors could also depend on $N$: this approximation by a Gaussian vector holds if $\vert I\vert=o(\sqrt{N})$ and $\vert J\vert=o(\sqrt{N})$.

Another property is concentration of the eigenvector mass: for two indices $k,\,\ell\in\unn{1}{N}$ and a set of indices $\vert I\vert\gg 1$ then we have for any small $\varepsilon>0$ and large $D>0$,
\[
\mathds{P}\left(
	\left\vert
		\sum_{\alpha\in I}
		u_k(\alpha)^2
		-
		\frac{\vert I\vert}{N}
	\right\vert
	\geqslant 
	N^\varepsilon
	\frac{\sqrt{2\vert I\vert}}{N}
\right)\leqslant N^{-D}
\quad\text{and}\quad
\mathds{P}\left(
	\left\vert
		\sum_{\alpha\in I}
		u_k(\alpha)u_\ell(\alpha)
	\right\vert
	\geqslant
	N^\varepsilon\frac{\sqrt{\vert I\vert}}{N}
\right)\leqslant N^{-D}.
\]
The first bound gives a notion of flatness of the eigenvectors as it states that their mass is evenly spaced on any set of indices while the second bound shows that two eigenvectors are approximately orthogonal when projected onto any subspace of indices. These bounds are a type of quantum unique ergodicity first stated as a conjecture by Rudnick--Sarnak \cite{rudnick1994behaviour} in a different context for eigenfunctions of the Laplace--Beltrami operator on some manifolds.\\[2ex]
\indent The Gaussian Orthogonal Ensemble is the most straightforward example of a \emph{delocalized} system. In the study of correlated quantum systems, it is believed that there are two main behaviors for eigenstates : a \emph{conductor} phase where eigenvectors are delocalized with  strongly correlated eigenvalues and an \emph{insulator} phase where eigenvectors are localized \cite{Anderson} and eigenvalues behaves independently. The GOE is the integrable system corresponding to this \emph{conductor} phase and our paper is a contribution to the study of eigenvectors statistics for more general delocalized systems.

Beyond the GOE integrable system, eigenvectors are no longer uniformly distributed on the sphere. However, one can asks whether eigenvectors behaves in a similar way asymptotically as the dimension grows to infinity. We give now some examples of eigenvector properties in the context of matrices with independent entries which were considered recently and we refer to \cite{orourke2016eigenvectors} for a more complete survey on the subject. The study of extremal coordinates as in \eqref{eq:integrdeloc} has seen a lot of progress for a wide class of models where the first upper bounds were given in \cites{erdos2009semicircle, Tao2011random} using spectral methods which was then improved optimally for bulk eigenvectors in \cite{vu2015random} (this result can be combined with \cite{rudelson2013hanson} for a more general entry distribution). Another quantitative upper bound on eigenvectors was given in \cite{rudelson2015delocalization} using a novel geometric method which also gives an upper bound for non-symmetric random matrices. One can also consider the smallest coordinates and ask if it behaves as if the eigenvectors were uniformly distributed on the sphere: such a lower bound was obtained in \cite{orourke2016eigenvectors}. Another type of eigenvector delocalization was proved in \cite{rudelson2016nogaps} to be universal: eigenvectors have substantial mass on any macroscopic set of coordinates. Finally, one can also consider the asymptotic distribution of eigenvector entries as in \eqref{eq:integrdist}. The works \cites{Tao2012random, knowles2013eigenvector, bourgade2017eigenvector} proved that entries are asymptotically Gaussian and independent. Our contribution is the understanding of correlation between distinct eigenvectors, as described below. \\[2ex]
\indent Denote $W$ a symmetric random matrix with independent entries (up to the symmetry) and consider \linebreak$\lambda_1\leqslant \dots\leqslant \lambda_N$ its ordered eigenvalues and $(u_1,\dots,u_N)$ the associated eigenvectors. For a fixed deterministic sequence of indices $k$ and $I\subset[\![1,N]\!]$ a $N$-dependent set of indices, let
\begin{equation}\label{eq:deffluct}
\tilde{p}_{kk}:=\frac{1}{\sqrt{2\vert I\vert}}\sum_{\alpha\in I}\left(
Nu_k(\alpha)^2-1
\right)
\quad\text{and}\quad
\tilde{p}_{k\ell}:=
\frac{1}{\sqrt{\vert I\vert}}\sum_{\alpha\in I}Nu_k(\alpha)u_\ell(\alpha).
\end{equation}
For asymptotically independent and normally distributed eigenvector entries, we would expect that these random variables converge to a Gaussian random variable in light of the central limit theorem.

While earlier studies on eigenvector distribution give some information on such fluctuations, it is not yet possible to study all joint moments between different entries of different eigenvectors for all Wigner matrices (such as $\mathds{E}u_k^2(1)u_\ell^2(2)$ for instance). The asymptotic Gaussianity of eigenvector entries was proved in \cites{Tao2012random, knowles2013eigenvector, bourgade2017eigenvector} and thus gives some understanding of the distribution of $(\tilde{p}_{k\ell})_{k,\ell}$. The results from \cites{Tao2012random, knowles2013eigenvector} apply to all eigenvector entries but are perturbative: they require models where the moments of the matrix entries match the Gaussian ones up to fourth order. On the other hand, the method of \cite{bourgade2017eigenvector} is non-perturbative but does not give the distribution of entries for \emph{distinct} eigenvectors. As a consequence, the problem of correlations for general eigenvector entries and entry distribution was left open.

The key new ingredient in this paper is the exhibition of a new moment observable that follows the eigenvector moment flow, the dynamics introduced in \cite{bourgade2017eigenvector}. In \cite{bourgade2018random}, another observable involving fluctuations of eigenvectors such as \eqref{eq:deffluct}
was introduced. By gaining information through the observable from \cite{bourgade2018random} and the one from this paper we are able to obtain the Gaussianity and decorrelation of the fluctuations $\tilde{p}_{kk}$. We also expect asymptotic Gaussianity of the mixed overlaps $\tilde{p}_{k\ell}$; here we are able to obtain their asymptotic variance, giving information on their actual size (see Theorem \ref{theo:mainresult}). As will be apparent in Subsection \ref{subsec:1.1}, the observable introduced in our paper actually also give some higher moment information on the $\tilde{p}_{k\ell}$'s.

Finally, we note that the study of fluctuations of eigenvectors were first on the global scale, in the sense that they involved a macroscopic number of eigenvectors. The first result comes from the eigenvectors of large sample covariance matrices in \cite{silverstein1990weak} where it was seen that some form of fluctuations involving all eigenvectors converges weakly to the Brownian bridge. Also, in the case of Gaussian matrices, say symmetric matrices, \cite{donati2012truncations} proved that the process
\begin{equation}\label{eq:averagefluct}
\left(
	\frac{1}{\sqrt{2}}\sum_{\substack{1\leqslant i\leqslant Ns\\1\leqslant j\leqslant Nt}}
	\left(
		\vert u_i(j)\vert^2-\frac{1}{N}
	\right)
\right)_{(s,t)\in[0,1]^2}
\end{equation}
converges to a bivariate Brownian bridge. This result was then generalized to more general model of matrices such as Wigner matrices in \cites{benaych2012universality}. Another form of convergence to the Brownian bridge for Wigner matrices was also proved in \cite{bao2014universality}. In contrast to averages of type \eqref{eq:averagefluct}, our work considers correlations between individual eigenvectors.
\subsection{Main algebraic results: Fermionic observables}\label{subsec:1.1}

Our main result consider dynamics of eigenvectors of random matrices which consists of an Ornstein-Uhlenbeck process on the space of symmetric matrices. The main characteristics of this dynamics is the explicit flow of eigenvectors along the process and the short time to relaxation to the equilibrium measure. For eigenvectors, this measure consists in the Haar measure on orthogonal matrices so that we obtain asymptotic Gaussianity and independence of eigenvectors entries. We now give the definition for the Dyson Brownian motion.

\begin{definition}\label{def:dyson}
Let $B$ be a symmetric $N\times N$ matrix such that $B_{ij}$ for $i<j$ and $B_{ii}/\sqrt{2}$ are standard independent brownian motions and $H_0$ a symmetric matrix. The symmetric Dyson Brownian motion with initial condition $H_0$ is given by the stochastic differential equation 
\begin{equation}\label{eq:dyson}
\D H_s=\frac{1}{\sqrt{N}}\D B_s-\frac{1}{2} H_s\D s.
\end{equation}
Besides, its eigenvalues and eigenvectors have the same distribution at time $s$ than the solution of the following system of coupled stochastic differential equations,
\begin{align}
\mathrm{d}\lambda_k(s)&=\frac{\mathrm{d}\widetilde{B}_{kk}(s)}{\sqrt{N}}+\left(
	\frac{1}{N}\sum_{\ell\neq k}\frac{1}{\lambda_k(s)-\lambda_\ell(s)}-\frac{\lambda_k(s)}{2}
\right)\D s,\label{eq:dysonval}\\
\mathrm{d}u_k^s&=\frac{1}{\sqrt{N}}\sum_{\ell\neq k}\frac{\mathrm{d}\widetilde{B}_{k\ell}(s)}{\lambda_k(s)-\lambda_\ell(s)}u_\ell^s-\frac{1}{2N}\sum_{\ell\neq k}\frac{\mathrm{d}s}{(\lambda_k(s)-\lambda_\ell(s))^2}u_k^s\label{eq:dysonvect}
\end{align}
where $\widetilde{B}$ is an independent copy of $B$.
\end{definition}

The explicit dynamics of eigenvectors \eqref{eq:dysonvect} and the independence of the noises driving \eqref{eq:dysonval} and \eqref{eq:dysonvect}  are key ingredients in our study. While this dynamics is hard to analyze directly, one can look at some observables on eigenvector moments which follows a parabolic equation. This equation, the eigenvector moment flow, was first considered in \cite{bourgade2017eigenvector} to study eigenvectors of generalized Wigner matrices, then in \cite{bourgade2017huang} for sparse matrices and in \cite{benigni2017eigenvectors} for eigenvectors of deformed Wigner matrices. It was also used in a refined way to study band matrices in \cite{bourgade2018random} by introducing observables which follow this eigenvector moment flow. In the rest of the article, we will refer to this observable as \emph{Bosonic} (see \eqref{eq:perfobs}) as a counterpart to the \emph{Fermionic} one we introduce now. 

The Fermionic observables are functions of the fluctuations \eqref{eq:deffluct}. Consider now the $\ell^2$-normalized eigenvectors of $H_s$ as in \eqref{eq:dyson}, $\bm{u}^s=(u_1^s,\dots,u_N^s)$,  and their associated ordered eigenvalues $\bm{\lambda}(s)=(\lambda_1(s)\leqslant\dots\leqslant\lambda_N(s))$. Let $(\mathbf{q}_\alpha)_{\alpha\in I}$ be a family of (non necessarily orthogonal) deterministic fixed vectors.  We can slightly generalize fluctuations by defining for $k\neq \ell$ in $[\![1,N]\!]$ and any $C_0>0$,
\begin{equation}\label{eq:defpkk}
p_{kk}(s)=\sum_{\alpha\in I}\scp{\mathbf{q}_\alpha}{u_k^s}^2-C_0\quad\text{and}\quad p_{k\ell}(s)=\sum_{\alpha\in I}\scp{\mathbf{q}_\alpha}{u_k^s}\scp{\mathbf{q}_\alpha}{u_\ell^s}.
\end{equation}
For $\mathbf{k}=(k_1,\dots,k_n)$ with $k_i$ pairwise distinct indices in $[\![1,N]\!]$, we  define the following $n\times n$ (symmetric) matrix of fluctuations
\begin{equation}\label{eq:defmatrix}
P_s(\mathbf{k})=
\begin{pmatrix}
	p_{k_1 k_1}(s)&p_{k_1 k_2}(s)&\dots &p_{k_1 k_n}(s)\\
	\vdots&\vdots&\dots&\vdots\\
	p_{k_nk_1}(s)&p_{k_nk_2}(s)&\dots&p_{k_n k_n}(s)
\end{pmatrix}.
\end{equation}
The Fermionic observable consists in the expectation of the determinant of our matrix of fluctuations,
\begin{equation}\label{eq:fermion}
f_{s,\bm{\lambda}}^{\mathrm{Fer}}(\mathbf{k})=\mathds{E}\left[
	\det P_s(\mathbf{k})
	\middle\vert \bm{\lambda}
\right],
\end{equation}
where we conditioned on the whole trajectory of eigenvalues from $0$ to $1$.
We use the following notation in order to describe the dynamics followed by $f^{\mathrm{Fer}}$, it consists of replacing the $i$-th coordinated by another index $\ell\notin\{k_1,\dots,k_n\}$:
\[
\mathbf{k}^i(\ell):=(k_1,\dots,k_{i-1},\ell,k_{i+1},\dots,k_n)\quad\text{and}\quad \vert \mathbf{k}\vert=\vert \mathbf{k}^i(\ell)\vert=n.
\]
The functions $f_{s}^{\mathrm{Fer}}$ undergo the following flow.
\begin{theorem}
\label{theo:emf}
Let $(\mathbf{u},\bm{\lambda})$ be the solution to the coupled flows as in Definition \ref{def:dyson} and let $f_{s,\bm{\lambda}}^{\mathrm{Fer}}$ be as in \eqref{eq:fermion}. Then for any $\mathbf{k}$ a pairwise distinct set of indices such that $\vert\mathbf{k}\vert=n$,
\begin{equation}\label{eq:emf}
\partial_s f_{s,\bm{\lambda}}^{\mathrm{Fer}}(\mathbf{k})=2\sum_{i=1}^n\sum_{\substack{\ell\in[\![1,N]\!]\\\ell\notin\{k_1,\dots,k_n\}}}
\frac{f_{s,\bm{\lambda}}^{\mathrm{Fer}}(\mathbf{k}^i(\ell))-f_{s,\bm{\lambda}}^{\mathrm{Fer}}(\mathbf{k})}{N(\lambda_{k_i}-\lambda_\ell)^2}.
\end{equation}
\end{theorem}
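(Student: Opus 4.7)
\medskip
\noindent\textbf{Proof plan.} The plan is to apply It\^o's formula to $\det P_s(\mathbf{k})$ using the eigenvector SDE \eqref{eq:dysonvect}, and then to condition on the eigenvalue trajectory $\bm{\lambda}$. Since the Brownian motion $\widetilde{B}$ driving the eigenvectors is independent of the one driving $\bm{\lambda}$, every stochastic integral appearing in the It\^o expansion has zero conditional mean, and only the drift and quadratic-variation terms contribute to $\partial_s f^{\mathrm{Fer}}_{s,\bm{\lambda}}(\mathbf{k})$. This reduces \eqref{eq:emf} to a pointwise algebraic identity in the entries $(p_{ab})$ and the spectrum $\bm{\lambda}$.

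From \eqref{eq:dysonvect} a direct computation gives the stochastic differential of each entry of $P_s(\mathbf{k})$: on the diagonal one finds
\[
\mathrm{d}p_{aa}(s)=\mathrm{d}\mathcal{M}_{aa}+\frac{1}{N}\sum_{\ell\neq a}\frac{p_{\ell\ell}(s)-p_{aa}(s)}{(\lambda_a-\lambda_\ell)^2}\,\mathrm{d}s,
\]
with a similar but slightly longer expression for $p_{ab}$ with $a\neq b$. The covariations $\mathrm{d}\langle p_{ab},p_{cd}\rangle$ are also explicit because two martingale parts share a Brownian $\widetilde{B}_{k\ell}$ only when $\{k,\ell\}\subset\{a,b,c,d\}$; this gives expressions quadratic in the $p$'s with poles $(\lambda_k-\lambda_\ell)^{-2}$. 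Applying It\^o to the polynomial $F(P)=\det P$ then gives
\[
\mathrm{d}(\det P)=\sum_{i,j}\mathrm{cof}_{ij}(P)\,\mathrm{d}p_{k_ik_j}+\tfrac{1}{2}\sum_{i,j,r,s}\partial^2_{ij,rs}\det(P)\,\mathrm{d}\langle p_{k_ik_j},p_{k_rk_s}\rangle,
\]
and conditioning on $\bm{\lambda}$ discards the stochastic integrals, leaving a deterministic polynomial in the $p_{ab}$'s parametrized by $\bm{\lambda}$.

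The heart of the argument is the reorganization of this deterministic expression into the right-hand side of \eqref{eq:emf}. By multilinearity of the determinant in rows and columns, $\det P(\mathbf{k}^i(\ell))-\det P(\mathbf{k})$ is exactly what arises from substituting the $i$-th row and column of $P(\mathbf{k})$; tracking the poles $(\lambda_{k_i}-\lambda_\ell)^{-2}$ attached to each cofactor and covariation contribution, the full It\^o drift should collapse into the sum indexed by $i=1,\dots,n$ and $\ell$. The restriction $\ell\notin\mathbf{k}$ in \eqref{eq:emf} has a transparent origin: a term with $\ell=k_j$ for some $j\neq i$ would yield a matrix with two identical rows and therefore a vanishing determinant---this is the Fermionic exclusion that motivates the name. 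The main obstacle is precisely this combinatorial bookkeeping: one must verify that every first-order cofactor-times-drift term pairs correctly with a second-order covariation to assemble into $\det P(\mathbf{k}^i(\ell))$, and that all off-support contributions, in particular those with $\ell\in\mathbf{k}$, cancel exactly thanks to the antisymmetry of the determinant.
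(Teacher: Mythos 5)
Your high-level framework is sound and matches the combinatorial proof given in Appendix \ref{app:proof} of the paper (not the Grassmann proof of Section \ref{sec:grassmann}): conditioning on $\bm{\lambda}$ kills the martingale part, and the remaining drift is $\mathds{E}[L_t \det P_s(\mathbf{k})\mid\bm{\lambda}]$ with $L_t=\sum_{k<\ell}\tfrac{1}{N(\lambda_k-\lambda_\ell)^2}X_{k\ell}^2$ from Proposition \ref{prop:generator}. You then need exactly three facts: $X_{k\ell}^2\det P(\mathbf{k})=0$ when $\{k,\ell\}\cap\mathbf{k}=\emptyset$ (trivial); $X_{k_i\ell}^2\det P(\mathbf{k})=2(\det P(\mathbf{k}^i(\ell))-\det P(\mathbf{k}))$ for $\ell\notin\mathbf{k}$; and $X_{k_ik_j}^2\det P(\mathbf{k})=0$ for $i\ne j$ in $\{1,\dots,n\}$. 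You state all three as things that ``should'' hold but prove none of them. The second and third are the entire content of the theorem, so what you have is a plan, not a proof.

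More importantly, your stated reason for the restriction $\ell\notin\mathbf{k}$ is incorrect, which suggests you have not correctly located where the work is. You argue that setting $\ell=k_j$ produces a matrix with two identical rows, so such terms vanish and the restriction is harmless. But look at what would actually appear if the sum ran over all $\ell\neq k_i$:
\[
\frac{2\big(f^{\mathrm{Fer}}_s(\mathbf{k}^i(k_j))-f^{\mathrm{Fer}}_s(\mathbf{k})\big)}{N(\lambda_{k_i}-\lambda_{k_j})^2}
=\frac{2\big(0-f^{\mathrm{Fer}}_s(\mathbf{k})\big)}{N(\lambda_{k_i}-\lambda_{k_j})^2}\neq 0.
\]
So those terms do not drop out by antisymmetry; the restriction $\ell\notin\mathbf{k}$ is not innocuous. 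The actual origin of the restriction is the separate, non-obvious identity $X_{k_ik_j}^2\det P(\mathbf{k})=0$, which requires a cancellation between many terms. In the Grassmann formulation this falls out from $\langle u_{k_i}\rangle_{\bm{\eta}}^2=0$ and anticommutativity (see the computation in the proof of Lemma \ref{lem:emf}); in the combinatorial formulation it is the bulk of Appendix \ref{app:proof}, where one tracks the action of $X_{k_ik_j}^2$ on each permutation (via the relations \eqref{eq:algrel}) and groups the resulting determinant-like terms by signature to see them cancel. Until you verify this identity explicitly --- or equivalently, carry out the cofactor/covariation bookkeeping you describe --- the proof has a genuine gap exactly at its Fermionic heart.
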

Note that these equations actually only need for $\mathbf{u}$ to solve \eqref{eq:dysonvect} and we could consider any deterministic path of eigenvalues (regular enough) or initial condition. Thus \eqref{eq:emf} can also be understood as $\binom{N}{n}$ coupled deterministic (ordinary) differential equations.

	{
	\centering
	Multi-particle representation of \eqref{eq:emf}:\hspace{4em}
	\raisebox{-.1\height}{\begin{tikzpicture}
	\begin{scope}[yshift=-1cm]
		\draw[-] (0,0) to (8,0);
		\foreach \a in {1,3,5}{		
		\node[draw, circle, fill=black, scale=.5] (\a) at (\a,0) {};
		}
		\node[draw, cross, BrickRed, scale=4] (11) at (1,.5) {};
		\node[draw, circle, BrickRed, scale=.5] (11) at (1,.5) {};
		\node[draw, fill=White, circle, scale=.5] (12) at (2,0) {};
		\node[draw, fill=White, circle, scale=.5] (14) at (4,0) {};
		\node[draw, fill=White, circle, scale=.5] (15) at (6,0) {};
		\node[draw, fill=White, circle, scale=.5] (13) at (7,0) {};
		\path[->, >=stealth, thick, BrickRed] (3) edge[bend right=60] (11);
		\node[draw=none] at (3,1) {\cor{Impossible}};
		\path[->, >=stealth, thick] (3) edge[bend right=40] (12);
		\path[->, >=stealth, thick] (3) edge[bend left=40] (13);
		\path[->, >=stealth, thick] (3) edge[bend left=40] (14);
		\path[->, >=stealth, thick] (3) edge[bend left=40] (15);

	\end{scope}
	\end{tikzpicture}}
	
}
\vspace{4ex}
This dynamics on eigenvector moments can also be represented as a multi-particle random walk in a random environment: every configuration of particles has at most one particle on each site and each particle jumps on an empty site at a rate depending on the eigenvalue process.

We call this observable Fermionic by comparison with the observable from \cites{bourgade2017eigenvector, bourgade2018random} which we define now. 
Consider a configuration of $n$ particles $\bm{\xi}:\unn{1}{N}\to \mathbb{N}$  where $\xi_i$ is seen as the number of particles at site $i$. While this formalism makes it simpler to read, we can also write this configuration as an ordered list of the sites where there are particles counted with their multiplicity. In other words, if one denotes $k_1<\dots<k_p$ the indices such that $\xi_{k_i}\geqslant 1$ and $\xi_\ell=0$ for $\ell\notin\{k_1,\dots,k_p\}$, we can write $\bm{\xi}=(k_1,\dots,k_1,k_2,\dots,k_2,\dots,k_p,\dots,k_p)$ where each $k_i$ appears $\xi_{k_i}$ times.

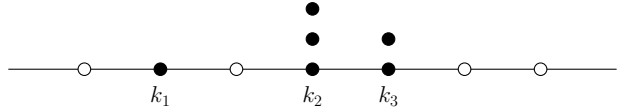
\begin{figure}[H]
	\centering
	\tikzset{every node/.style={draw,circle,fill=black, scale=.5}}
	\begin{tikzpicture}
		\draw[-] (-1,0) -- (7,0);
		\node[fill=White] at (0,0) {};  
 		\node[label={below: \LARGE $k_1$}] at (1,0) {};
		\node[fill=White] at (2,0) {};
		\node[label=below: \LARGE $k_2$] at (3,0) {};
		\node at (3,0.4) {};
		\node at (3,.8) {}; 
		\node[label=below: \LARGE $k_3$] at (4,0) {};
		\node at (4,.4) {};
		\node[fill=White] at (5,0) {};
		\node[fill=White] at (6,0) {};
	\end{tikzpicture}
	\caption{A configuration of particles $\bm{\xi}=(k_1,k_2,k_2,k_2,k_3,k_3)$}
\end{figure}

The Bosonic observable can be given in a matricial way. Given a configuration $\bm{\xi}$ and denoting the particles' positions $(k_1,\dots,k_p)$, define the matrices, 
\[
E^{(ij)}=\left(\frac{\delta_{ik}\delta_{j\ell}+\delta_{i\ell}\delta_{jk}}{1+\delta_{ij}}\right)_{1\leqslant k,\ell\leqslant n}
\quad\text{and}
\quad
Q^{(ij)}_s=\begin{pmatrix}
1 &1\\
1 &1
\end{pmatrix}p_{k_ik_j}(s),
\quad
i,j\in\unn{1}{n}.
\]
Define the following symmetric $2n\times 2n$ matrix involving fluctuations
\[
Q_s(k_1,\dots,k_n)=\sum_{1\leqslant i\leqslant j\leqslant n}E^{(ij)}\otimes Q^{(ij)}_s,\quad\text{for instance}\quad 
Q_s(k_1, k_2)
=
\begin{pmatrix}
p_{k_1k_1}&p_{k_1k_1}&p_{k_1k_2}&p_{k_1k_2}\\
p_{k_1k_1}&p_{k_1k_1}&p_{k_1k_2}&p_{k_1k_2}\\
p_{k_2k_1}&p_{k_2k_1}&p_{k_2k_2}&p_{k_2k_2}\\
p_{k_2k_1}&p_{k_2k_1}&p_{k_2k_2}&p_{k_2k_2}
\end{pmatrix}.
\]
Then the Bosonic observable is 
\begin{equation}\label{eq:perfobs}
f^{\mathrm{Bos}}_{s,\bm{\lambda}}(\bm{\xi})=\frac{1}{\mathcal{M}(\bm{\xi})}\mathds{E}\left[\mathrm{Haf}\,Q_s(\mathbf{\bm{\xi}})
\middle\vert\bm{\lambda}
\right]
\quad\text{with}\quad
\mathcal{M}(\bm{\xi}) \coloneqq \prod_{i=1}^N(2\xi_i)!! = \prod_{i=1}^N \prod_{\substack{k\leqslant 2\xi_i\\k\text{ odd}}}k
\end{equation}
where the Hafnian of a $2n\times 2n$ matrix $A$ is
\[
\mathrm{Haf}\, A = \frac{1}{n!2^n}\sum_{\sigma\in\mathfrak{S}_{2n}}\prod_{j=1}^n A_{\sigma(2j-1),\sigma(2j)}.
\]
The formula \eqref{eq:perfobs} is different than the original one from \cite{bourgade2018random}. It was defined as a sum over perfect matchings on a graph given by the configuration of particles which exactly consists in computing a Hafnian. The following theorem gives the evolution of $f_s^{\mathrm{Bos}}$ which we call the \emph{Bosonic} eigenvector moment flow.

\begin{theorem}[\cite{bourgade2018random}] Suppose that $\bm{u}^s$ is the solution of the Dyson vector flow \eqref{eq:dysonvect} and $f_{s}^{\mathrm{Bos}}(\bm{\xi})$ is given by \eqref{eq:perfobs}.  Then it satisfies the equation
\begin{equation}\label{eq:discs2}
\partial_s f^{\mathrm{Bos}}_{s,\bm{\lambda}}(\bm{\xi})=\sum_{k\neq \ell}\frac{2\xi_k(1+2\xi_\ell)\left(f^{\mathrm{Bos}}_{s,\bm{\lambda}}(\bm{\xi}^{k,\ell})-f^{\mathrm{Bos}}_{s,\bm{\lambda}}(\bm{\xi})\right)}{(\lambda_k(s)-\lambda_\ell(s))^2},
\end{equation}
where $\bm{\xi}^{k,\ell}$ is the configuration obtained by moving a particle from the site $k$ to the site $\ell$.
\end{theorem}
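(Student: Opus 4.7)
The plan is to apply Itô's formula to the matrix entries $p_{k\ell}(s)$ and then to the products appearing in the Hafnian expansion, take the conditional expectation given $\bm{\lambda}$ to kill the martingale terms, and finally match the resulting drift to the right-hand side of \eqref{eq:discs2} via a combinatorial argument.

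I would start by deriving the stochastic differentials of $z_k(s)=\scp{\mathbf{q}}{u_k^s}$ and their quadratic covariations from \eqref{eq:dysonvect}. Using independence of the off-diagonal entries of $\widetilde{B}$, one gets
$$
\D\langle z_k,z_\ell\rangle_s=\frac{z_k z_\ell}{N(\lambda_k-\lambda_\ell)^2}\D s\quad(k\neq\ell),\qquad \D\langle z_k,z_k\rangle_s=\frac{1}{N}\sum_{m\neq k}\frac{z_m^2}{(\lambda_k-\lambda_m)^2}\D s,
$$
together with the drift $-\frac{z_k}{2N}\sum_{m\neq k}(\lambda_k-\lambda_m)^{-2}\D s$. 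Summing over the family $(\mathbf{q}_\alpha)_{\alpha\in I}$ produces an explicit drift and cross-variation for each $p_{k\ell}(s)$ which is a linear combination of the $p_{ij}$'s; the constant shift $-C_0$ in \eqref{eq:defpkk} drops out because only differences $p_{kk}-p_{mm}$ enter the drift.

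Next I would expand $\mathrm{Haf}\, Q_s(\bm{\xi})$ as a sum over perfect matchings $M$ of the $2n$ row/column indices of $Q_s$; each matching contributes a product of $n$ entries $p_{ij}(s)$ determined by the site labels of the matched indices. Applying the multivariate Itô formula to each such product and conditioning on $\bm{\lambda}$ removes the martingale parts and leaves two kinds of drift contribution: (i) single-factor drifts coming from the drift of one $p_{ij}$ inside a product, and (ii) cross-variation drifts coming from each pair of factors. Each of these is again a linear combination of monomials in the $p_{ij}$'s that can be re-identified with sums of Hafnians, either of $\bm{\xi}$ itself or of transported configurations $\bm{\xi}^{k,\ell}$.

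The main obstacle is the combinatorial reassembly in this last step. For each ordered pair of sites $k\neq\ell$ with $\xi_k\geqslant 1$, one must show that the total drift channelled through the transition $\bm{\xi}\to\bm{\xi}^{k,\ell}$ equals exactly $\frac{2\xi_k(1+2\xi_\ell)}{(\lambda_k-\lambda_\ell)^2}\bigl(f^{\mathrm{Bos}}_{s,\bm{\lambda}}(\bm{\xi}^{k,\ell})-f^{\mathrm{Bos}}_{s,\bm{\lambda}}(\bm{\xi})\bigr)$. The factor $2\xi_k$ counts the choice of which of the $2\xi_k$ matching positions occupied by a particle at $k$ is moved, while $1+2\xi_\ell$ accounts for the two kinds of destinations: either the moved particle creates a new pair (the $+1$, produced by a cross-variation that re-pairs) or it attaches to one of the $2\xi_\ell$ existing positions at $\ell$, which is the bosonic weight. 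The normalization $\mathcal{M}(\bm{\xi})^{-1}$ converts the matching count into the announced prefactor. I would verify this identity by induction on $n=|\bm{\xi}|$, using a row-expansion of the Hafnian to reduce the inductive step to the one-particle case $\partial_s \mathds{E}[p_{kk}\,|\,\bm{\lambda}]$ directly computed in the first step.
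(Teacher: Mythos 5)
Your plan — apply It\^{o} directly to the Hafnian expansion, kill the martingale parts by conditioning on $\bm{\lambda}$, and reassemble the drift combinatorially — is the approach of the cited reference \cite{bourgade2018random} and is in the same spirit as the combinatorial proof of the Fermionic analogue in Appendix~\ref{app:proof}. The present paper instead leans on the identity \eqref{eq:cook}: it realizes $\mathcal{M}(\bm{\xi})^{-1}\mathds{E}[\mathrm{Haf}\,Q_s(\bm{\xi})\mid\bm{\lambda}]$ as a Gaussian average over random vectors $\mathbf{q}$ of the scalar moment observable $\mathds{E}[\prod_i\scp{\mathbf{q}}{u_{k}^s}^{2\xi_i}\mid\bm{\lambda}]$ of \cite{bourgade2017eigenvector}, and since the flow coefficients do not depend on $\mathbf{q}$, \eqref{eq:discs2} follows from the one-$\mathbf{q}$ case with no new combinatorics. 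That reduction is considerably shorter than a direct It\^{o}-plus-matchings calculation.

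There is a concrete error at the start of your argument. For $k\neq\ell$ you claim $\D\langle z_k,z_\ell\rangle_s=\frac{z_kz_\ell}{N(\lambda_k-\lambda_\ell)^2}\D s$. The only Brownian motion shared by $\D z_k$ and $\D z_\ell$ is $\widetilde{B}_{k\ell}=\widetilde{B}_{\ell k}$; its coefficient in $\D z_k$ is $\frac{z_\ell}{\sqrt{N}(\lambda_k-\lambda_\ell)}$ while in $\D z_\ell$ it is $\frac{z_k}{\sqrt{N}(\lambda_\ell-\lambda_k)}=-\frac{z_k}{\sqrt{N}(\lambda_k-\lambda_\ell)}$, so the correct cross-variation is
\begin{equation*}
\D\langle z_k,z_\ell\rangle_s=-\frac{z_kz_\ell}{N(\lambda_k-\lambda_\ell)^2}\,\D s .
\end{equation*}
That sign is load-bearing: it is what makes the cross-factor contributions in It\^{o}'s formula combine with the one-factor drifts to produce the discrete \emph{difference} $f^{\mathrm{Bos}}_s(\bm{\xi}^{k,\ell})-f^{\mathrm{Bos}}_s(\bm{\xi})$; with the wrong sign the would-be generator fails to annihilate constants and the identity cannot hold.

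The larger gap is that the combinatorial reassembly in your final paragraph is asserted, not proved. You give a counting heuristic for the weight $2\xi_k(1+2\xi_\ell)$ but do not exhibit how the sum of It\^{o} drift contributions over all perfect matchings and over all ordered pairs of matched factors resolves into transported Hafnians, nor how the normalization $\mathcal{M}(\bm{\xi})^{-1}$ interacts with the move (note $\mathcal{M}(\bm{\xi}^{k,\ell})\neq\mathcal{M}(\bm{\xi})$ whenever $\xi_k>1$ or $\xi_\ell>0$, and this ratio is exactly what converts a raw matching count into the announced prefactor). Your proposed induction via row-expansion of the Hafnian is also shaky in this Bosonic setting: sites can carry multiplicities $\xi_i>1$, so deleting the two rows and columns of a matched pair produces a configuration of $n-1$ particles whose occupancies and normalization do not reduce to the one-particle base case in any direct way. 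This is precisely the difficulty that the determinantal (Fermionic) observable of Appendix~\ref{app:proof} avoids, since pairwise distinct indices make the row-expansion clean, and it is why the reference proof tracks perfect matchings and occupancies explicitly rather than inducting on $n$.
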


\subsection{Main analytical results: decorrelation and optimal size of overlaps}
The flow of eigenvectors of the Dyson Brownian motion can now be studied through the above two families of observables. Combining the information we can extract through this analysis, we are able to compute new eigenvector statistics. We give here the results for generalized Wigner matrices for simplicity but the results should hold for a wide class of random matrices.
\begin{definition}
Let $W$ be a $N\times N$ symmetric matrix such that its entries $(w_{ij})_{1\leqslant i\leqslant j\leqslant N}$ are centered independent random variables of variance $s_{ij}$ such that there exists two positive constants $c$ and $C$ such that 
\[
\frac{c}{N}\leqslant s_{ij}\leqslant \frac{C}{N}\quad\text{for all }i,j\quad\text{and}\quad \sum_{i,j=1}^Ns_{ij}=1\quad\text{for all }j.
\]
We also assume that the matrix entries have all finite moments in the following sense, for every $p\in\mathbb{N}$ there exists a constant $\mu_p$ independent of $N$ such that
\[
\mathds{E}\left[
	\left(
		s_{ij}^{-1/2}w_{ij}
	\right)^p
\right]\leqslant \mu_p.
\]
\end{definition}

%
 The local behavior of eigenvectors was first considered in the case of Wigner matrices in \cites{knowles2013eigenvector, Tao2012random} with a moment matching condition. It was shown that if two matrix ensembles have the same first four moments, the bulk and edge eigenvectors have asymptotically the same distribution (note that only two moments need to match for edge eigenvectors from \cite{knowles2013eigenvector}).
 
The moment condition was removed in \cite{bourgade2017eigenvector} using a dynamical proof to show asymptotic Gaussianity of projections of eigenvectors:
If $(u_1,\dots,u_N)$ denotes the $\ell^2$-normalized eigenvectors of $W$ a generalized Wigner matrix, for any deterministic set of indices $I\subset\unn{1}{N}$ of fixed cardinality $\vert I\vert=m$ and any
 $k\in\unn{1}{N}$,
\begin{equation}\label{theo:gaussianity}
\left(
	\sqrt{N}u_k(\alpha)
\right)_{\alpha\in I}\xrightarrow[N\rightarrow\infty]{}\left(\mathcal{N}_i\right)_{i=1}^m
\end{equation}
with $(\mathcal{N}_i)$ a family of independent centered unit variance Gaussian random variables and the convergence holds in the sense of moments.

The main contribution of this paper does not concern the Gaussianity of fluctuations of eigenvectors but the correlations between fluctuations. 
\begin{theorem}\label{theo:mainresult}
Let $\vartheta\in(0,\frac{1}{2}]$ be a (small) positive constant. Consider $(k_N,\ell_N)$ two deterministic sequences of indices in $\unn{1}{N}$ such that $k_n\neq \ell_N$, let $I$ be a $N$-dependent set of indices such that $N^{\vartheta}\leqslant \vert I\vert\leqslant N^{1-\vartheta}$ then there exist $\delta_1, \delta_2>0$ depending only on $\vartheta$ such that 
\begin{equation}\label{eq:correlres}
\mathds{E}\left[
	\frac{N^2}{2\vert I\vert}
	\left(
		\sum_{\alpha\in I}u_k(\alpha)^2-\frac{\vert I\vert}{N}
	\right)
	\left(
		\sum_{\alpha\in I}u_\ell(\alpha)^2-\frac{\vert I\vert}{N}
	\right)
\right]\leqslant N^{-\delta_1}.
\end{equation}
Besides, we also have that
\begin{equation}\label{eq:varres}
\left\vert
	\mathds{E}\left[
		\left(
			\frac{N}{\sqrt{\vert I\vert}}\sum_{\alpha\in I}u_k(\alpha)u_\ell(\alpha)
		\right)^2
	\right]
	-
	1
\right\vert
\leqslant N^{-\delta_2}.
\end{equation}
In particular, the last bound gives that for any $\lambda>0$, we have
\begin{equation}
\mathds{P}\left(
	\left\vert
		\sum_{\alpha\in I}
		u_k(\alpha)u_\ell(\alpha)
	\right\vert
	\geqslant
	\lambda
	\frac{\sqrt{\vert I\vert}}{N}
\right)\leqslant \lambda^{-2}(1+o(1)).\label{eq:partialoverlap}
\end{equation}
\end{theorem}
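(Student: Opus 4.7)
The plan is to read off both moments appearing in \eqref{eq:correlres} and \eqref{eq:varres} simultaneously by applying the Fermionic and Bosonic flows to the two-site configuration supported on $\{k,\ell\}$. With the choice $\mathbf{q}_\alpha=e_\alpha$ for $\alpha\in I$ and $C_0=|I|/N$ in \eqref{eq:defpkk}, one checks directly that
\[
\det P_s(k,\ell)=p_{kk}(s)p_{\ell\ell}(s)-p_{k\ell}(s)^2,\qquad \mathrm{Haf}\,Q_s(k,\ell)=p_{kk}(s)p_{\ell\ell}(s)+2p_{k\ell}(s)^2,
\]
so $f^{\mathrm{Fer}}_{s,\bm\lambda}(k,\ell)$ and $f^{\mathrm{Bos}}_{s,\bm\lambda}(\mathbf{1}_k+\mathbf{1}_\ell)$ provide, after averaging over the eigenvalue trajectory, two invertible linear combinations of $\mathds{E}[p_{kk}p_{\ell\ell}]$ and $\mathds{E}[p_{k\ell}^2]$. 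After rescaling to the normalization of \eqref{eq:deffluct}, \eqref{eq:correlres} and \eqref{eq:varres} reduce to a relative $N^{-\delta}$ comparison of these two averages with their Haar-measure values, which can be computed from standard Weingarten identities on the orthogonal group and satisfy $\mathds{E}_{\mathrm{Haar}}[\tilde p_{kk}\tilde p_{\ell\ell}]=\O{1/N}$ and $\mathds{E}_{\mathrm{Haar}}[\tilde p_{k\ell}^2]=1-\O{|I|/N}$.

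\textbf{Short-time relaxation.} I let $H_s$ solve the Dyson Brownian motion \eqref{eq:dyson} starting from a generalized Wigner matrix, up to a time $t=N^{-1+\omega}$ with $\omega>0$ small. Conditionally on the eigenvalue trajectory, both $f_s^{\mathrm{Fer}}(k,\ell)$ and $f_s^{\mathrm{Bos}}(\mathbf{1}_k+\mathbf{1}_\ell)$ satisfy parabolic equations whose jump rates $(N(\lambda_i-\lambda_j)^2)^{-1}$ are, by eigenvalue rigidity for the DBM, comparable to those of a discrete Laplacian on $\unn{1}{N}$. The associated heat kernel equilibrates after time $N^{-1+o(1)}$, so---combined with a priori bounds on $p_{kk}(0)$ and $p_{k\ell}(0)$ coming from the isotropic local law---a maximum-principle argument in the spirit of \cites{bourgade2017eigenvector,bourgade2018random} brings both observables within a relative error $N^{-\delta}$ of their Haar values by time $t$.

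\textbf{Comparison and tail bound.} The estimates of the previous step hold for $H_t$, which differs from $H_0$ only by a small Gaussian increment of variance $t$. To transfer them back to $H_0$, the final step is a Green function / moment matching comparison as in \cite{bourgade2017eigenvector}: one represents $p_{kk}$ and $p_{k\ell}$ via contour integrals of products of resolvent entries and controls the change of $\mathds{E}[p_{kk}p_{\ell\ell}]$ and $\mathds{E}[p_{k\ell}^2]$ along the OU flow by a cumulant expansion. Once \eqref{eq:varres} is proved, the probability estimate \eqref{eq:partialoverlap} follows by a direct application of Markov's inequality to $(\sum_{\alpha\in I}u_k(\alpha)u_\ell(\alpha))^2$.

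\textbf{Main obstacle.} The delicate point lies on the Fermionic side: since $p_{kk}p_{\ell\ell}$ and $p_{k\ell}^2$ are individually of the same order $|I|/N^2$, the decorrelation \eqref{eq:correlres} asks for a genuine cancellation in the determinant at a relative scale $N^{-\delta}$, not merely a bound of the natural size. Only the coupling between the parabolic equation \eqref{eq:emf} for the Fermionic observable and the equilibrium value provided by the Bosonic one seems able to produce such a cancellation, and propagating the resulting power gain through the Green function comparison step is the main technical hurdle.
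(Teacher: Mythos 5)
Your overall outline (Fermionic and Bosonic two-particle observables as an invertible linear system for $\mathds{E}[p_{kk}p_{\ell\ell}]$ and $\mathds{E}[p_{k\ell}^2]$, maximum principle for the eigenvector moment flow, Green function comparison, then Markov) matches the architecture of the paper's proof, and your Haar-equilibrium values are correct. However, there are two genuine gaps.

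First, the relaxation time $t=N^{-1+\omega}$ is too short and the argument would break at this scale. The maximum-principle estimate on $f_s^{\mathrm{Fer}}$ needs as input a quantum unique ergodicity bound on $p_{kk}(s)$, $p_{k\ell}(s)$ whose error $\Psi_1(s)$ decreases in $s$ and is only useful once $s\geqslant N^{-1/3+\omega}$ (Proposition~\ref{lem:queedge}); even more restrictively, the decorrelation estimate discussed below requires $s\geqslant N^{-2/3+\delta'}$. At $s\sim N^{-1+\omega}$ the a priori bound on $p_{kk}(s)$ is barely better than the initial $|I|/N$ and the error terms in the Gronwall inequality swamp the target scale $|I|/N^2$. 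What is actually needed, and what the paper uses, is a macroscopic-type time $s\in[N^{-\theta},1]$ with $\theta$ \emph{small}, not $\theta$ close to $1$. The fact that the single-index heat kernel of the eigenvector moment flow equilibrates locally at time $N^{-1+o(1)}$ is not enough: the observable here must reach an error $N^{-\delta}$ relative to $|I|/N^2$, which is well below the natural size of its individual summands, so mere local equilibration of the jump process does not produce the required cancellation.

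Second, the mechanism that produces that cancellation is missing from your argument, not merely flagged as an obstacle. When one runs the maximum principle on $f_s^{\mathrm{Fer}}(\mathbf{k^m})$, the term
\[
\frac{1}{N}\sum_{\alpha,\beta\in I}\EL{u_{k_2}^s(\alpha)\,u_{k_2}^s(\beta)\,\Im G^s_{\alpha\beta}(z_{k_1})}
\]
appears. Using only delocalization and the isotropic local law on the two factors separately gives an error $\asymp |I|^2/(N^2\sqrt{N\eta})$, which is larger than $|I|/N^2$ as soon as $|I|\gg\sqrt{N}$; so for the full range $|I|\leqslant N^{1-\vartheta}$ a naive bound fails. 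The paper's key new input is Proposition~\ref{prop:decorr}, a decorrelation estimate between eigenvector entries and off-diagonal resolvent entries of $H_s$, proved via the characteristics method for the resolvent SDE \eqref{eq:dynareso} together with a maximum principle for $\ELH{u_j^s(\alpha)u_j^s(\beta)}$. This is not a routine local-law consequence and is exactly what lets the $\alpha\neq\beta$ contribution drop to order $N^{-\kappa}|I|/N^2$, leaving the diagonal contribution $\frac{|I|}{N^2}\Im m(z)$ as the leading term. Your proposal has no replacement for this step, so the ``genuine cancellation'' you rightly identify as the crux is not established.

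The final comparison step is realized in the paper through the eigenvector Green function comparison theorem (Theorem~\ref{theo:greencomp}, a variant of Knowles--Yin), which acts directly on smooth functionals of $N\scp{\mathbf{q}}{u_j}^2$ and needs a level repulsion input; your cumulant-expansion-on-resolvents variant is a reasonable alternative but is not how the paper proceeds. Your Markov step for \eqref{eq:partialoverlap} is correct and matches the paper.
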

\begin{remark}
The condition on the cardinality of the set $I$ is optimal as correlations between entries of eigenvectors begin to appear as $\vert I\vert \asymp N$ since eigenvectors are normalized and orthogonal. Note also that while the corresponding size of fluctuations of the overlap in \eqref{eq:partialoverlap} is optimal, the probability bound is not.
\end{remark}
To understand the link between Theorem \ref{theo:mainresult} and Theorem \ref{theo:emf}, we can consider both the Bosonic and Fermionic observables with two particles. Consider a configuration $\bm{\xi}$ consisting in one particle at site $k$ and one at site $\ell$. The Bosonic observable from \cite{bourgade2018random} can be written as 
\[
f^{\mathrm{Bos}}_{s,\bm{\lambda}}(\bm{\xi}) = \EL{p_{kk}p_{\ell\ell}+2p_{k\ell}^2}.
\]
While we can analyze this quantity, we cannot extract information on either $\E{p_{kk}p_{\ell\ell}}$ or $\E{p_{k\ell}^2}$. However, if we now consider the corresponding Fermionic observable of the form
\[
f^{\mathrm{Fer}}_{s,\bm{\lambda}}(\bm{\xi}) = \EL{p_{kk}p_{\ell\ell}-p_{k\ell}^2},
\]
we can combine it with the Bosonic observable to obtain Theorem \ref{theo:mainresult}.
\subsection{Organization of the paper}
Section \ref{sec:grassmann} is devoted to the proof of Theorem \ref{theo:emf} using Grassmann variables and the Wick theorem. A combinatorial proof is also given in Appendix \ref{app:proof}. The Fermionic observable can be seen as an anti-commutative Gaussian integral defined in the next section.  

In Section \ref{sec:apriori}, we give several a priori estimates we need for the dynamics analysis, such as local laws, quantum unique ergodicity, and decorrelations of eigenvectors and the resolvent along the dynamics. The local laws were established in previous papers \cites{erdos2012rigidity, bloemendal2014isotropic} and the quantum unique ergodicity has to be developed for edge eigenvectors by adapting the proof from \cite{bourgade2018random}. \\ 
A key new analytic input of our paper is the following. In the analysis of the Fermionic observable, we have to bound terms of the form $\mathds{E}[u_k^s(\alpha) u_k^s(\beta)G^s_{\alpha\beta}(z)]$ where $G(z)=(H_s-z)^{-1}$ is the resolvent. While estimates on the size of eigenvector and resolvent entries are available, such bounds are not sufficient in our paper: the decorrelation between these two quantities needs to be seen. The proof of this decorrelation is based on the stochastic equation followed by the resolvent which we can solve using a method of characteristics. The solution is then given by the initial resolvent at time $s=0$ which, by conditioning on the initial generalized Wigner matrix, decorrelates from the eigenvector $u_k^s$ for $s\gg N^{-1}$.  

These a priori estimates are used in Section \ref{sec:relaxation} to analyze the Fermionic observable first for 2 particles in order to prove Theorem \ref{theo:mainresult} and then for $n$ particles in Theorem \ref{theo:resultinterm} under an additional assumption on the set of indices: $\vert I\vert\ll \sqrt{N}$.
Finally, while we state all our result in the symmetric case, they still hold for Hermitian matrices. While the dynamics \eqref{eq:dysonvect} changes, the Fermionic observable is the same. This is explained in Appendix \ref{app:hermi}.

\paragraph{Acknowledgments.} The author would like to kindly thank his advisors P. Bourgade and S. Péché for their help during the writing of the paper, N. Cook for interesting discussion and the derivation of \eqref{eq:cook} and anonymous referees for helpful suggestions on how to improve the present article.

\section{Proof of Theorem \ref{theo:emf}}\label{sec:grassmann}
In this section, we give a proof of Theorem \ref{theo:emf} using a representation of this determinant as an anticommutative Gaussian integral. For a more through introduction to Grassmann integration we refer the reader to \cite{zinn1989quantum}*{Chapter 1}. 
\subsection{Preliminaries}
The proof of Theorem \ref{theo:emf} involves a supersymmetric representations of our determinant \eqref{eq:fermion}. In order to develop the proof and the tools, we recall in this subsection notions of Grassmann variables and Gaussians expectations with respect to these variables. Grassmann variables can be seen as anticommutative numbers, we first consider four families of Grassmann variables $\{\eta_i,\xi_i, \varphi_i, \psi_i\}_{i=1}^{N}$, they follow the relations of commutation for $i,j$ two indices in $[\![1,N]\!]$ given by
\[
\eta_i\eta_j=-\eta_j\eta_i,\quad \xi_i\xi_j=-\xi_j\xi_i\text{ and}\quad \eta_i\xi_j=-\xi_j\eta_i
\]
and we have similar relations between $(\bm{\eta},\bm{\varphi})$, $(\bm{\eta},\bm{\psi})$, $(\bm{\xi},\bm{\varphi})$, $(\bm{\xi},\bm{\psi}),$ and $(\bm{\varphi},\bm{\psi})$. In particular, see that $\eta_i^2=\xi_i^2=\varphi^2_i=\psi^2_i=0$ and that the variables $\{\eta_i\xi_j\}$ and $\{\varphi_i\psi_j\}$ all commute.
\begin{remark}
A possible representation of such variables is given by matrices. For instance, the Clifford-Wigner-Jordan representation of these Grassmann variables is given by the following: if we want a family of $m$ Grassmann variables $\theta_1,\dots,\theta_m$, one can represent them by $m$ matrices of size $2^m\times 2^m$ with
\[
\theta_i = \bigotimes_{j=1}^{i-1}
\begin{pmatrix}
1 & 0 \\ 0 & -1
\end{pmatrix}
\otimes 
\begin{pmatrix}
0 & 0 \\ 1 & 0
\end{pmatrix}
\otimes
\bigotimes_{j=i+1}^m
\begin{pmatrix}
1 & 0\\0 & 1
\end{pmatrix}.
\]
For instance, we would consider $2^{4N}\times 2^{4N}$ matrices in our case. 
\end{remark} 
Now that we have defined these Grassmann variables, we define our generalized projections. Namely, we can define for a $N$-dimensional vector $v\in\mathbb{C}^N$ the following quantity
\begin{equation}\label{eq:projection}
\langle v\rangle_{\bm{\eta}}=\sum_{\alpha=1}^{N} v(\alpha)\eta_\alpha.
\end{equation}
We can also define functions of these Grassmann variables, note that by Taylor expansion and the commutations rules, it is enough to define polynomials of such variables. Thus we define a function
\[
F(\bm{\eta},\bm{\xi},\bm{\varphi},\bm{\psi})
=
\hspace{-1em}
\sum_{I,J,K,L\subset[\![1,N]\!]}
a_{I,J,K,L}
\prod_{i_1\in I\cap J}\eta_{i_1}\xi_{i_1}
\prod_{j_1\in I\setminus J}\eta_{j_1}
\prod_{k_1\in J\setminus I}\xi_{k_1}
\prod_{i_2\in K\cap L}\varphi_{i_2}\psi_{i_2}
\prod_{j_2\in K\setminus L}\varphi_{j_2}
\prod_{k_2\in L\setminus K}
\psi_{k_2}
\]
where $a_{I,J,K,L}$ are real numbers for our purpose. By the matricial representation, one can then see such a function as a matrix. From this definition of a function, we can define the integral of a function by, 
\[
\int F(\bm{\eta},\bm{\xi},\bm{\varphi},\bm{\psi})\prod_{i=1}^{N} \D\eta_i \D\xi_i \D \varphi_i\D\psi_i= a_{[N],[N],[N],[N]}
\]
where we shortened $[N]:=[\![1,N]\!]$. As explained earlier, we can define functions through a Taylor expansion, in order to construct a Gaussian expectation, we need to construct the exponential. It is straightforward to define it as
\[
\exp\left(
	F(\bm{\eta},\bm{\xi},\bm{\varphi},\bm{\psi})
\right)=\sum_{m=1}^\infty \frac{F(\bm{\eta},\bm{\xi},\bm{\varphi},\bm{\psi})^m}{m!}=\sum_{m=1}^{m_0}\frac{F(\bm{\eta},\bm{\xi},\bm{\varphi},\bm{\psi})^m}{m!}
\]
for some $m_0$ via the commutation relations. We can define our Gaussian expectation as, for an invertible $N\times N$ matrix $\Delta,$
\begin{equation}\label{eq:superexpect}
\mathcal{E}_{\bm{\eta},\bm{\xi},\bm{\varphi},\bm{\psi}}^{\Delta}\left[F(\bm{\eta},\bm{\xi},\bm{\varphi},\bm{\psi})\right]
=
\frac{1}{\det(\Delta^{-1})}\int F(\bm{\eta},\bm{\xi},\bm{\varphi},\bm{\psi})\exp\left(
	\sum_{i,j=1}^{N}\eta_i(\Delta^{-1})_{ij} \xi_j
	+
	\sum_{i=1}^N \varphi_i\psi_i\right)\prod_{i=1}^{2N}\D\eta_i\D\xi_i\D \varphi_i\D \psi_i.
\end{equation}
The Fermionic Wick theorem allows us to compute joint Gaussian moments with respect to this super-expectation. We give it here with respect to our Gaussian expectation and the moments we need later.
\begin{lemma}[Fermionic Wick theorem \cite{zinn1989quantum}*{Section 1.9}]\label{lem:wick}
Consider $\{(i_k,j_k)\}_{k=1}^m\subset [\![1,N]\!]\times[\![1,N]\!]$, and $\{\eta_i,\xi_i, \varphi_i,\psi_i\}_{i=1}^{N}$ a family of Grassmann variables, we have for any $C_0>0$,
\begin{equation}
\mathcal{E}_{\bm{\eta},\bm{\xi},\bm{\varphi},\bm{\psi}}^{\Delta}\left[
	\prod_{k=1}^m
	\left(
		\eta_{i_k}+\I\sqrt{C_0}\varphi_{i_k}
	\right)
	\left(
		\xi_{j_k}+\I\sqrt{C_0}\psi_{i_k}
	\right)
\right]
=
\det \left(
	(\Delta-C_0\mathrm{Id}_m)_{i_k,j_\ell}
\right)_{k,\ell=1}^m.
\end{equation}
\end{lemma}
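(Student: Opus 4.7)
The plan is to view this as the classical Fermionic Wick theorem for a single Gaussian Grassmann pair, applied to an enlarged family of variables obtained by linear combination. Because the quadratic weight in the exponential of \eqref{eq:superexpect} is the sum of two decoupled pieces, one in $(\bm{\eta},\bm{\xi})$ with covariance $\Delta$ and one in $(\bm{\varphi},\bm{\psi})$ with covariance $\mathrm{Id}$, the super-expectation factorizes across the two sectors, and the only nonzero pair covariances are
\[
\mathcal{E}^{\Delta}[\eta_i\xi_j]=\Delta_{ij},\qquad \mathcal{E}^{\Delta}[\varphi_i\psi_j]=\delta_{ij},
\]
all other pairings ($\eta\eta$, $\xi\xi$, $\varphi\varphi$, $\psi\psi$, and any $\eta$--$\varphi$, $\eta$--$\psi$, $\xi$--$\varphi$, $\xi$--$\psi$ cross-sector pair) vanishing. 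In particular the combined family $(\bm{\eta},\bm{\xi},\bm{\varphi},\bm{\psi})$ is jointly Grassmann Gaussian, so any complex linear combinations of these variables remain Grassmann Gaussian and inherit the Wick theorem with the induced pair covariances.

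Set $A_k:=\eta_{i_k}+\I\sqrt{C_0}\,\varphi_{i_k}$ and $B_k:=\xi_{j_k}+\I\sqrt{C_0}\,\psi_{i_k}$. Bilinearity of the covariance together with the vanishing of all same-letter and cross-sector pairings listed above gives at once
\[
\mathcal{E}^{\Delta}[A_kA_\ell]=\mathcal{E}^{\Delta}[B_kB_\ell]=0,\qquad \mathcal{E}^{\Delta}[A_kB_\ell]=\Delta_{i_k,j_\ell}-C_0\,\delta_{i_k,i_\ell}.
\]
Under the (implicit) assumption that $i_1,\ldots,i_m$ are pairwise distinct, which is the relevant setting for our application where they index the points of $I$, one has $\delta_{i_k,i_\ell}=\delta_{k\ell}$, and the right-hand side is precisely $(\Delta-C_0\mathrm{Id}_m)_{i_k,j_\ell}$ with the identity read as the $m\times m$ Kronecker symbol on the row/column labels.

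The lemma now reduces to the single-pair Fermionic Wick theorem, which for a Grassmann Gaussian family with vanishing $AA$ and $BB$ covariances asserts
\[
\mathcal{E}^{\Delta}\bigl[A_1B_1\cdots A_mB_m\bigr]=\det\bigl(\mathcal{E}^{\Delta}[A_kB_\ell]\bigr)_{k,\ell=1}^{m}.
\]
To verify this I would expand the Grassmann exponential sector by sector, match the pairings to permutations in $\mathfrak{S}_m$, and check that the signs produced by reordering Grassmann variables reconstruct precisely the Leibniz expansion of the determinant; equivalently, one can differentiate the generating functional $\mathcal{E}^{\Delta}[\exp(\sum_k s_k A_k+t_k B_k)]=\exp(\sum_{k,\ell}s_k t_\ell\mathcal{E}^{\Delta}[A_kB_\ell])$ with respect to all the $s_k,t_\ell$ at the origin. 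I do not anticipate a serious analytic obstacle; the only real subtlety is the sign bookkeeping from reordering Grassmann variables, which is handled uniformly by the standard reduction of \cite{zinn1989quantum}*{Section 1.9}.
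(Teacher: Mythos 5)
Your reduction to the single-pair Fermionic Wick theorem for the Grassmann-Gaussian linear combinations $A_k$, $B_k$ is the standard route, and the paper in fact only cites this lemma to Zinn-Justin rather than proving it, so there is no in-text proof to weigh your argument against. The ingredients you invoke are sound: the super-expectation \eqref{eq:superexpect} factorizes between the $(\bm{\eta},\bm{\xi})$ and $(\bm{\varphi},\bm{\psi})$ sectors, all same-letter and cross-sector pair covariances vanish, and the determinant then emerges from the usual pairing/sign bookkeeping (the $m=2$ check gives $\mathcal{E}[A_1B_1]\mathcal{E}[A_2B_2]-\mathcal{E}[A_1B_2]\mathcal{E}[A_2B_1]$ exactly).

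The patch you apply to reconcile your pair covariance with the claimed right-hand side, however, is where the argument goes wrong, and it papers over a genuine typo in the lemma as printed. Taking the statement at face value with $\psi_{i_k}$, you correctly obtain $\mathcal{E}^{\Delta}[A_kB_\ell]=\Delta_{i_k,j_\ell}-C_0\,\delta_{i_k,i_\ell}$, which does not equal $(\Delta-C_0\mathrm{Id})_{i_k,j_\ell}=\Delta_{i_k,j_\ell}-C_0\,\delta_{i_k,j_\ell}$ in general. Reinterpreting $\mathrm{Id}_m$ as an $m\times m$ Kronecker on the labels $k,\ell$ requires the $i_k$ to be pairwise distinct, and this is precisely \emph{not} the setting in which the lemma is applied: in the proof of the subsequent lemma the indices $i_1,\dots,i_n$ and $j_1,\dots,j_n$ are summed over all of $[\![1,N]\!]$ (so they certainly collide), the product there is written with $\psi_{j_k}$, not $\psi_{i_k}$, and the resulting determinant uses $(\Delta-C_0\mathrm{Id})_{i_p,j_q}$ with $\mathrm{Id}$ the $N\times N$ identity. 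The correct resolution is that the $\psi$-index in the lemma should read $j_k$; with $B_k=\xi_{j_k}+\I\sqrt{C_0}\,\psi_{j_k}$ your pair covariance becomes $\Delta_{i_k,j_\ell}-C_0\,\delta_{i_k,j_\ell}$ exactly, no distinctness assumption is needed, and the remainder of your argument goes through unchanged.
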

\subsection{Construction of the Fermionic observable}
In the following definitions we fix a set of indices $I \subset \unn{1}{N}$ and consider $(\mathbf{q}_i)_{i\in I}$ a family of vectors of $\mathbb{R}^N$ not necesarily orthogonal. We are able to construct an observable based on the families of Grassmann variables which follows \eqref{eq:emf}. Then by taking the Gaussian expectation defined in \eqref{eq:superexpect} we obtain the observable \eqref{eq:fermion} by choosing the right covariance matrix $\Delta$ depending on the family $(\mathbf{q}_i)$.

We consider the observable, for $\mathbf{u}^s$ the solution to the Dyson vector flow \eqref{eq:dysonvect}
\begin{equation}\label{eq:defferm}
g^{\mathrm{Fer}}_s(k_1,\dots,k_n)=g^{\mathrm{Fer}}_s(k_1,\dots,k_n;\bm{\eta},\bm{\xi},\bm{\varphi},\bm{\psi})\coloneqq\mathds{E}\left[
	\prod_{i=1}^n\langle {u}^s_{k_i}\rangle_{\bm{\eta}+\I\sqrt{C_0}\bm{\varphi}}
	\langle {u}^s_{k_i}\rangle_{\bm{\xi}+\I\sqrt{C_0}\bm{\psi}}
\middle\vert \bm{\lambda}
\right].
\end{equation}
Note that we dropped the subscript $\bm{\lambda}$ for simplicity and will do so for $f^{\mathrm{Fer}}_s$ and $f^{\mathrm{Bos}}_s$ in the rest of the article.
\begin{remark}
Note that in this definition, the product is commutative since we have quantities of order 2 in Grassmann variables. See also that this is a similar quantity as the moment observable from \cite{bourgade2017eigenvector}. Indeed if one considers a configuration with a single particle at sites $k_1,\dots,k_n$ then the observable would be written as 
\[
g^{\mathrm{Bos}}_s(k_1,\dots,k_n)=\mathds{E}\left[
	\prod_{i=1}^n\scp{\mathbf{q}}{u_{k_i}}^2
	\middle\vert\bm{\lambda}
\right].
\]
\end{remark}
In order to see that $g_s^{\mathrm{Fer}}$ follows a form of the eigenvector moment flow \eqref{eq:emf}, first see the following proposition from \cite{bourgade2017eigenvector} which gives us the generator of the Dyson vector flow.
\begin{proposition}[\cite{bourgade2017eigenvector}]\label{prop:generator}
The generator acting on smooth functions of the diffusion \eqref{eq:dysonvect} is given by 
\begin{equation}
L_t = \sum_{1\leqslant k<\ell\leqslant N}\frac{1}{N(\lambda_k-\lambda_\ell)^2}X_{k\ell}^2
\end{equation}
with the operator $X_{k\ell}$ defined by
\begin{equation}
X_{k\ell}=\sum_{\alpha=1}^N\left(
	u_k(\alpha)\partial_{u_\ell(\alpha)}-u_\ell(\alpha)\partial_{u_k(\alpha)}
\right)
\end{equation}
\end{proposition}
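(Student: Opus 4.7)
My plan is to derive the generator directly from the stochastic differential equation \eqref{eq:dysonvect} via Itô's formula and then verify by a direct operator expansion that the resulting expression coincides with $\sum_{k<\ell}\frac{1}{N(\lambda_k-\lambda_\ell)^2}X_{k\ell}^2$. For a smooth test function $F(u_1,\dots,u_N)$ depending on the vector coordinates, the generator $L_t$ has the form $L_t F = \mu\cdot\nabla F + \tfrac{1}{2}\mathrm{Tr}(Q\,\nabla^2 F)$, where $\mu$ is read off directly from the drift part of \eqref{eq:dysonvect} as $\mu_{k,\alpha}=-\frac{1}{2N}\sum_{\ell\neq k}\frac{u_k(\alpha)}{(\lambda_k-\lambda_\ell)^2}$, and $Q_{k\alpha,k'\alpha'}$ is the quadratic covariation of the martingale parts of $du_k(\alpha)$ and $du_{k'}(\alpha')$. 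Since one conditions on $\bm{\lambda}$, the $\lambda_k$'s are treated as frozen coefficients for this computation.

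The main analytic input is the covariation of the Brownian noises: by the symmetry $\widetilde B_{k\ell}=\widetilde B_{\ell k}$ and independence of the entries $(\widetilde B_{k\ell})_{k\le \ell}$, one has $d\langle\widetilde B_{k\ell},\widetilde B_{k'\ell'}\rangle = \mathbf{1}_{\{k,\ell\}=\{k',\ell'\}}\,ds$. This splits the covariation sum into two pieces. The ``diagonal'' case $k=k',\ \ell=\ell'$ contributes, when $k=k'$, the term $\frac{1}{N}\sum_{\ell\neq k}\frac{u_\ell(\alpha)u_\ell(\alpha')}{(\lambda_k-\lambda_\ell)^2}\,ds$; the ``swap'' case $k=\ell',\ \ell=k'$ (with $k\ne k'$) contributes $-\frac{1}{N(\lambda_k-\lambda_{k'})^2}u_{k'}(\alpha)u_k(\alpha')\,ds$. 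Assembling the generator then yields, after relabelling $\sum_{k\neq\ell}=2\sum_{k<\ell}$,
\begin{equation*}
L_t F = \sum_{k<\ell}\frac{1}{N(\lambda_k-\lambda_\ell)^2}\,\mathcal{D}_{k\ell}F,
\end{equation*}
where $\mathcal{D}_{k\ell}$ collects the drift pieces $-\sum_\alpha(u_k(\alpha)\partial_{u_k(\alpha)}+u_\ell(\alpha)\partial_{u_\ell(\alpha)})$, the two diagonal second-order pieces $\sum_{\alpha,\alpha'}u_\ell(\alpha)u_\ell(\alpha')\partial_{u_k(\alpha)}\partial_{u_k(\alpha')}+(k\leftrightarrow\ell)$, and a cross second-order piece $-2\sum_{\alpha,\alpha'}u_\ell(\alpha)u_k(\alpha')\partial_{u_k(\alpha)}\partial_{u_\ell(\alpha')}$.

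It then remains to verify the algebraic identity $\mathcal{D}_{k\ell}=X_{k\ell}^2$ as operators. Writing $A=\sum_\alpha u_k(\alpha)\partial_{u_\ell(\alpha)}$ and $B=\sum_\alpha u_\ell(\alpha)\partial_{u_k(\alpha)}$, one has $X_{k\ell}^2=A^2+B^2-AB-BA$. Expanding $AB$ produces the commutator $[\partial_{u_\ell(\alpha)},u_\ell(\alpha')]=\delta_{\alpha\alpha'}$, giving a first-order piece $\sum_\alpha u_k(\alpha)\partial_{u_k(\alpha)}$ plus the pure second-order piece $\sum_{\alpha,\alpha'}u_k(\alpha)u_\ell(\alpha')\partial_{u_\ell(\alpha)}\partial_{u_k(\alpha')}$, and symmetrically for $BA$. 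Under the symmetry $\alpha\leftrightarrow\alpha'$, the two cross second-order pieces coincide and combine with a factor $2$, matching precisely the cross term in $\mathcal{D}_{k\ell}$; the drift pieces from the commutators match as well, and $A^2$, $B^2$ supply the diagonal second-order terms.

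The only real obstacle is the bookkeeping: one must carefully track that (i) on the Itô side, unordered pair matching of Brownian motions produces both the diagonal and the swap covariation pieces, and (ii) on the operator side, the non-commutativity of $\partial_{u_\ell(\alpha)}$ with $u_\ell(\alpha')$ generates exactly the first-order drift needed to cancel what Itô calls $\mu\cdot\nabla$, with the factor $\tfrac12$ in Itô's formula compensated by the doubling $\sum_{k\ne\ell}=2\sum_{k<\ell}$. Once the two expansions are laid out side by side the identification is term by term.
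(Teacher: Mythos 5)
The paper itself does not prove this proposition; it quotes it from \cite{bourgade2017eigenvector}, so your route (Itô's formula for the flow \eqref{eq:dysonvect} plus a direct operator expansion of $X_{k\ell}^2$) is the natural one, and most of it is sound: the drift identification, the two covariation cases (diagonal and ``swap'', with the minus sign from $(\lambda_k-\lambda_{k'})(\lambda_{k'}-\lambda_k)$), and the operator identity $\mathcal{D}_{k\ell}=X_{k\ell}^2$ including the commutator bookkeeping that produces the first-order part, are all correct.

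The genuine gap is the final assembly, located exactly in your claim that the factor $\tfrac12$ in Itô's formula is ``compensated by the doubling $\sum_{k\neq\ell}=2\sum_{k<\ell}$''. No such compensation occurs for the drift or for the diagonal Hessian pieces: regrouping the ordered sum $\sum_k\sum_{\ell\neq k}$ into unordered pairs produces, for each pair $\{k,\ell\}$, the two \emph{different} terms $u_k(\alpha)\partial_{u_k(\alpha)}$ and $u_\ell(\alpha)\partial_{u_\ell(\alpha)}$ (respectively the two different diagonal Hessian blocks), not twice the same term; and for the cross term the factor $2$ coming from the two orderings is already spent in forming the single piece $-2\sum_{\alpha,\alpha'}u_\ell(\alpha)u_k(\alpha')\partial_{u_k(\alpha)}\partial_{u_\ell(\alpha')}$, so the Itô $\tfrac12$ survives throughout. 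Carrying your own intermediate formulas through correctly gives $L_t=\sum_{k<\ell}\frac{1}{2N(\lambda_k-\lambda_\ell)^2}X_{k\ell}^2$ for \eqref{eq:dysonvect} exactly as written in Definition \ref{def:dyson}, i.e.\ half the displayed operator. Quick checks: $X_{k\ell}^2u_k(\alpha)=-u_k(\alpha)$, so the stated $L_t$ applied to $F=u_k(\alpha)$ gives $-\frac1N\sum_{\ell\neq k}(\lambda_k-\lambda_\ell)^{-2}u_k(\alpha)$, twice the drift $-\frac1{2N}\sum_{\ell\neq k}(\lambda_k-\lambda_\ell)^{-2}u_k(\alpha)$ of the SDE; likewise a direct Itô computation for $F=\langle \mathbf{q},u_k\rangle^2$ gives $\sum_{\ell\neq k}\frac{\langle \mathbf{q},u_\ell\rangle^2-\langle \mathbf{q},u_k\rangle^2}{N(\lambda_k-\lambda_\ell)^2}$, half of what the stated generator yields, since $X_{k\ell}^2\langle \mathbf{q},u_k\rangle^2=2(\langle \mathbf{q},u_\ell\rangle^2-\langle \mathbf{q},u_k\rangle^2)$. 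So the constant in the proposition and the normalization of \eqref{eq:dysonvect} differ by a factor $2$ (a tension the paper itself exhibits: compare the factor $2$ in Theorem \ref{theo:emf} with its absence in Lemma \ref{lem:emf}); in any case your derivation, as written, reaches the displayed constant only through the erroneous compensation step, so you should redo the bookkeeping and state explicitly which normalization you are matching.
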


We thus need to prove the following lemma, showing that $g_s^{\mathrm{Fer}}$ follows the eigenvector moment flow

\begin{lemma}\label{lem:emf} For $g_s^{\mathrm{Fer}}$ defined as in \eqref{eq:defferm} and $\mathbf{k}=(k_1,\dots,k_n)$ with $k_i\neq k_j$ for $i\neq j$, we have
\begin{equation}
\partial_s g_s^{\mathrm{Fer}}(\mathbf{k})=\sum_{i=1}^n\sum_{\substack{\ell=1\\\ell\notin\{k_1,\dots,k_n\}}}^N\frac{g_s^{\mathrm{Fer}}(\mathbf{k}^i(\ell))-g_s^{\mathrm{Fer}}(\mathbf{k})}{N(\lambda_{k_{i}}-\lambda_\ell)^2}.
\end{equation}
\end{lemma}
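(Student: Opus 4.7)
The plan is to invoke Proposition \ref{prop:generator} conditionally on the eigenvalue path $\bm{\lambda}$. Writing
\begin{equation*}
F_s := \prod_{i=1}^n \langle u_{k_i}^s \rangle_A \, \langle u_{k_i}^s \rangle_B,\qquad A := \bm{\eta}+\I\sqrt{C_0}\bm{\varphi},\quad B := \bm{\xi}+\I\sqrt{C_0}\bm{\psi},
\end{equation*}
definition \eqref{eq:defferm} reads $g_s^{\mathrm{Fer}}(\mathbf{k}) = \mathds{E}[F_s \mid \bm{\lambda}]$, and since the Grassmann generators are independent of the Dyson vector flow the lemma reduces to computing $L_s F_s$ as an identity in the Grassmann algebra, where $L_s = \sum_{k<\ell}\frac{X_{k\ell}^2}{N(\lambda_k-\lambda_\ell)^2}$. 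Setting $a_m := \langle u_m^s\rangle_A$, $b_m := \langle u_m^s\rangle_B$ and $\phi_m := a_m b_m$ so that $F_s = \prod_i \phi_{k_i}$, I would first record the elementary identity $X_{k\ell} a_m = \delta_{m\ell}\, a_k - \delta_{mk}\, a_\ell$ (and the analog for $b_m$), and then decompose the sum $\sum_{k<\ell} X_{k\ell}^2 F_s$ according to how many of the indices $k,\ell$ lie in $\mathbf{k}$.

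The empty-intersection case contributes nothing, because $X_{k\ell}$ then annihilates every factor of $F_s$. In the one-intersection case, say $k=k_i$ and $\ell\notin\mathbf{k}$, only $\phi_{k_i}$ is affected, and a direct Leibniz computation gives
\begin{equation*}
X_{k\ell}^2\phi_{k_i} = 2(\phi_\ell - \phi_{k_i}),
\end{equation*}
so that $X_{k\ell}^2 F_s = 2[F_s(\mathbf{k}^i(\ell)) - F_s(\mathbf{k})]$. Each admissible pair $(i,\ell)$ with $i\in\unn{1}{n}$ and $\ell\notin\mathbf{k}$ corresponds to exactly one ordered pair $(k,\ell)$ with $k<\ell$, and these contributions assemble into precisely the form appearing on the right-hand side of \eqref{eq:emf}.

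The crucial step, and the only piece with genuine algebraic content, is the two-intersection case $k=k_i$, $\ell=k_j$ with $i\neq j$. Expanding $X_{k\ell}(\phi_{k_i}\phi_{k_j})$ by Leibniz yields four cross-monomials; after using the anticommutation of the $A$- and $B$-type variables to regroup them, each monomial contains a repeated linear form in the Grassmann variables and hence vanishes by nilpotency ($a_p^2 = b_p^2 = 0$ for every $p$). Thus $X_{k\ell}(\phi_{k_i}\phi_{k_j}) = 0$ already at first order, and \emph{a fortiori} $X_{k\ell}^2 F_s = 0$ in this case. This cancellation is the main hurdle: it has no commutative analog, and without it the four surviving cross-monomials would couple distinct indices of $\mathbf{k}$ and prevent the flow from closing on $g_s^{\mathrm{Fer}}$. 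Once this vanishing is established, collating the contributions of the three cases and taking the conditional expectation over the Dyson vector trajectory produces the evolution equation claimed in the lemma.
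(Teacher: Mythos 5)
Your proof is correct and follows essentially the same strategy as the paper: condition on the eigenvalue path, invoke Proposition \ref{prop:generator} for the generator $L_s$ of the Dyson vector flow, and split the sum over pairs $(k,\ell)$ according to whether zero, one, or two of the indices lie in $\mathbf{k}$, handling the two-intersection case via the anticommutativity and nilpotency of the Grassmann forms. The one genuine improvement is in the two-intersection case: you observe that $X_{k_i k_j}(\phi_{k_i}\phi_{k_j})$ already vanishes at \emph{first} order, since each of the four Leibniz monomials contains a repeated odd element (e.g.\ $a_{k_j}\,b_{k_i}\,a_{k_j}\,b_{k_j}= -b_{k_i}\,a_{k_j}^2\,b_{k_j}=0$), whereas the paper expands $X^2_{k_ik_j}(\phi_{k_i}\phi_{k_j})$ via the second-order product rule into seven monomials and verifies that their signed sum cancels. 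Your version is cleaner and avoids the bookkeeping of the cross-term cancellation. One small remark: your intermediate identity $X^2_{k_i\ell}F_s = 2\bigl[F_s(\mathbf{k}^i(\ell))-F_s(\mathbf{k})\bigr]$ carries the factor $2$, so after summing against the generator $\smash{\sum_{k<\ell}(N(\lambda_k-\lambda_\ell)^2)^{-1}X^2_{k\ell}}$ one lands on the right-hand side of \eqref{eq:emf} (Theorem \ref{theo:emf}), which \emph{does} have the prefactor $2$; the displayed equation in Lemma \ref{lem:emf} omits that factor, which appears to be a typographical inconsistency in the paper rather than a defect in your argument.
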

\begin{proof}
First see by definition of the operator that since the eigenvectors for $k\notin\{k_1,\dots,k_n\}$ are not considered in the observable $g_s^{\mathrm{Fer}}(\mathbf{k})$ we clearly have
\[
X_{k\ell}^2g_s^{\mathrm{Fer}}(\mathbf{k})=0\quad\text{for }k\notin\{k_1,\dots,k_n\}.
\]
Now, we need to show that for fixed $i,j\in\unn{1}{n}$ we also have $X_{k_i k_j}^2g_s^{\mathrm{Fer}}(\mathbf{k})=0$. This equality actually comes from the anticommutativity of the Grassmann variables. First see that we have the relations
\[
X_{k\ell}\langle {u}_k\rangle_{\bm{\eta}}=-\langle {u}_\ell\rangle_{\bm{\eta}}\quad\text{and}\quad X_{k\ell}\langle {u}_\ell\rangle_{\bm{\eta}}=\langle {u}_k\rangle_{\bm{\eta}}.
\]
Besides, by definition of the operator $X_{k\ell}$ we only need to look at the part of the observable involving the eigenvectors ${u}_{k}$ and ${u}_{\ell}$, hence computing the quantity
\begin{align*}
X^2_{k_i k_j}&\left(
	\langle {u}_{k_i}\rangle_{\bm{\eta}}\langle {u}_{k_i}\rangle_{\bm{\xi}}\langle {u}_{k_j}\rangle_{\bm{\eta}}\langle {u}_{k_j}\rangle_{\bm{\xi}}
\right)
=
2(
	\langle {u}_{k_j}\rangle_{\bm{\eta}}\langle {u}_{k_j}\rangle_{\bm{\xi}}\langle {u}_{k_j}\rangle_{\bm{\eta}}\langle {u}_{k_j}\rangle_{\bm{\xi}}
	+
	\langle {u}_{k_i}\rangle_{\bm{\eta}}\langle {u}_{k_i}\rangle_{\bm{\xi}}\langle {u}_{k_i}\rangle_{\bm{\eta}}\langle {u}_{k_i}\rangle_{\bm{\xi}}
	\\
	&-
	2\langle {u}_{k_i}\rangle_{\bm{\eta}}\langle {u}_{k_i}\rangle_{\bm{\xi}}\langle {u}_{k_j}\rangle_{\bm{\eta}}\langle {u}_{k_j}\rangle_{\bm{\xi}}
	-
	\langle {u}_{k_j}\rangle_{\bm{\eta}}\langle {u}_{k_i}\rangle_{\bm{\xi}}\langle {u}_{k_i}\rangle_{\bm{\eta}}\langle {u}_{k_j}\rangle_{\bm{\xi}}
	-
	\langle {u}_{k_i}\rangle_{\bm{\eta}}\langle {u}_{k_j}\rangle_{\bm{\xi}}\langle {u}_{k_j}\rangle_{\bm{\eta}}\langle {u}_{k_i}\rangle_{\bm{\xi}}\\
	&-
	\langle {u}_{k_j}\rangle_{\bm{\eta}}\langle {u}_{k_i}\rangle_{\bm{\xi}}\langle {u}_{k_j}\rangle_{\bm{\eta}}\langle {u}_{k_i}\rangle_{\bm{\xi}}
	-
	\langle {u}_{k_i}\rangle_{\bm{\eta}}\langle {u}_{k_j}\rangle_{\bm{\xi}}\langle {u}_{k_i}\rangle_{\bm{\eta}}\langle {u}_{k_j}\rangle_{\bm{\xi}}
)
\\
&=0
\end{align*}
where we used the fact that $\langle {u}_{k_i}\rangle_{\bm{\eta}}^2=0$ and the anticommutativity relations. Finally, we need to compute $X_{k_i\ell}^2g^{\mathrm{Fer}}_s(\mathbf{k})$ for $i\in\unn{1}{n}$ and $\ell\in\unn{1}{N}\setminus\{k_1,\dots,k_N\},$ to do so we just need to compute
\[
X_{k_i\ell}^2\langle {u}_{k_i}\rangle_{\bm{\eta}}\langle {u}_{k_i}\rangle_{\bm{\xi}}=2\left(
	\langle {u}_{\ell}\rangle_{\bm{\eta}}\langle {u}_{\ell}\rangle_{\bm{\xi}}-
	\langle {u}_{k_i}\rangle_{\bm{\eta}}\langle {u}_{k_i}\rangle_{\bm{\xi}}
\right)
\]
which means that we have
\[
X^2_{k_i\ell}g_s^{\mathrm{Fer}}(\mathbf{k})=2\left(
	g_s^{\mathrm{Fer}}(\mathbf{k}^i(\ell))-g_s^{\mathrm{Fer}}(\mathbf{k})
\right).
\]
Combining all these equalities, we obtain Lemma \ref{lem:emf}.
\end{proof}

We now only need to show that we can obtain $f_s^{\mathrm{Fer}}$ using our observable $g^{\mathrm{Fer}}_s$, this involves the Fermionic Wick theorem given by Lemma \ref{lem:wick}.
\begin{lemma}
There exists $\Delta$ such that 
\[
\mathcal{E}^\Delta_{\bm{\eta},\bm{\xi},\bm{\varphi},\bm{\psi}}\left[
	g_s^{\mathrm{Fer}}(\mathbf{k})
\right]=f_s^{\mathrm{Fer}}(\mathbf{k}).
\]
\end{lemma}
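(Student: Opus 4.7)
The plan is to expand the anticommutative product inside $g_s^{\mathrm{Fer}}(\mathbf{k})$ coordinate by coordinate, apply the Fermionic Wick theorem to replace the Grassmann integral by a determinant, then recognize the outcome as $\det P_s(\mathbf{k})$ by multilinearity together with the orthonormality of $\bm{u}^s$. Throughout, the choice of covariance is guided by the requirement $\langle u_{k_i}^s,\Delta\, u_{k_j}^s\rangle = \sum_{\alpha\in I}\langle \mathbf{q}_\alpha,u_{k_i}^s\rangle\langle \mathbf{q}_\alpha,u_{k_j}^s\rangle$, which points to
\[
\Delta \;=\; \sum_{\alpha\in I}\mathbf{q}_\alpha\mathbf{q}_\alpha^{T}.
\]

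First I would expand each generalized projection using \eqref{eq:projection} to obtain
\[
\prod_{i=1}^{n}\langle u_{k_i}^s\rangle_{\bm\eta+\I\sqrt{C_0}\bm\varphi}\langle u_{k_i}^s\rangle_{\bm\xi+\I\sqrt{C_0}\bm\psi}
=\sum_{\alpha_1,\ldots,\alpha_n}\sum_{\beta_1,\ldots,\beta_n}\prod_{i=1}^{n}u_{k_i}^s(\alpha_i)\,u_{k_i}^s(\beta_i)\,(\eta_{\alpha_i}+\I\sqrt{C_0}\varphi_{\alpha_i})(\xi_{\beta_i}+\I\sqrt{C_0}\psi_{\beta_i}).
\]
Applying Lemma \ref{lem:wick} with the above $\Delta$ to the inner Grassmann integral produces a factor $\det\!\bigl((\Delta-C_0\mathrm{Id})_{\alpha_i,\beta_j}\bigr)_{i,j=1}^{n}$, so that
\[
\mathcal{E}^{\Delta}_{\bm\eta,\bm\xi,\bm\varphi,\bm\psi}[g_s^{\mathrm{Fer}}(\mathbf{k})]=\mathds{E}\!\left[\sum_{\bm\alpha,\bm\beta}\prod_{i}u_{k_i}^s(\alpha_i)u_{k_i}^s(\beta_i)\sum_{\sigma\in\mathfrak{S}_n}\mathrm{sgn}(\sigma)\prod_{i=1}^{n}(\Delta-C_0\mathrm{Id})_{\alpha_i,\beta_{\sigma(i)}}\middle\vert\bm\lambda\right].
\]

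Next I would reindex the $\beta$'s by setting $\gamma_i=\beta_{\sigma(i)}$, which uncouples the inner sums, and observe that
\[
\sum_{\alpha_i,\gamma_i}u_{k_i}^s(\alpha_i)u_{k_{\sigma(i)}}^s(\gamma_i)(\Delta-C_0\mathrm{Id})_{\alpha_i,\gamma_i}=\langle u_{k_i}^s,\Delta u_{k_{\sigma(i)}}^s\rangle-C_0\langle u_{k_i}^s,u_{k_{\sigma(i)}}^s\rangle=\sum_{\alpha\in I}\langle\mathbf{q}_\alpha,u_{k_i}^s\rangle\langle\mathbf{q}_\alpha,u_{k_{\sigma(i)}}^s\rangle-C_0\delta_{i,\sigma(i)},
\]
using orthonormality of $\bm{u}^s$ for the second equality. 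Comparing with \eqref{eq:defpkk}, this is exactly $P_s(\mathbf{k})_{i,\sigma(i)}$. Summing over $\sigma$ with signs therefore reassembles the determinant, yielding
\[
\mathcal{E}^{\Delta}_{\bm\eta,\bm\xi,\bm\varphi,\bm\psi}[g_s^{\mathrm{Fer}}(\mathbf{k})]=\mathds{E}[\det P_s(\mathbf{k})\mid\bm\lambda]=f_s^{\mathrm{Fer}}(\mathbf{k}).
\]

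The one technical nuisance is that $\Delta=\sum_{\alpha\in I}\mathbf{q}_\alpha\mathbf{q}_\alpha^{T}$ has rank at most $|I|$ and is typically not invertible, whereas the definition \eqref{eq:superexpect} requires $\Delta^{-1}$. This is the only step that needs extra care, and I would handle it by a standard regularization: apply the identity above to $\Delta_\e=\Delta+\e\,\mathrm{Id}$ (which is invertible for every $\e>0$), note that both sides are polynomials in the entries of $\Delta_\e$, and pass to the limit $\e\to 0$. Since every ingredient is a finite algebraic manipulation and the Wick formula produces a polynomial in the entries of $\Delta-C_0\mathrm{Id}$, the limit is well defined and gives the result; alternatively one can simply read the proof above as a definition of the Gaussian expectation by its Wick expansion, which is the usual viewpoint in Grassmann calculus.
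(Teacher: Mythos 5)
Your argument is essentially the same as the paper's: expand the generalized projections, apply the Fermionic Wick theorem (Lemma \ref{lem:wick}) to produce $\det\bigl((\Delta-C_0\mathrm{Id})_{i_p j_q}\bigr)$, and then recombine using orthonormality of $\bm{u}^s$ to identify the resulting matrix entries with $p_{k_pk_q}$; the paper does the recombination by invoking multilinearity of the determinant directly, while you expand via Leibniz and reindex by $\sigma$, which is the same algebra written out more explicitly. Both arrive at the same covariance $\Delta_{ij}=\sum_{\alpha\in I}q_\alpha(i)q_\alpha(j)$ and the same use of $\langle u_{k_p},u_{k_q}\rangle=\delta_{p,q}$ to produce the $-C_0$ only on the diagonal.

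One thing you add that the paper is silent about: the superexpectation \eqref{eq:superexpect} is stated only for invertible $\Delta$, yet the chosen $\Delta$ has rank at most $|I|<N$ in the intended applications. Your observation is correct and your fix is the right one — either regularize $\Delta\mapsto\Delta+\e\,\mathrm{Id}$ and pass to the limit through the Wick polynomial, or simply take the Wick expansion as the definition of $\mathcal{E}^\Delta$, which is standard in Grassmann calculus. This is a minor but genuine point that the paper glosses over, so flagging and resolving it is a real improvement rather than a deviation.
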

\begin{proof}
By definition of $g_s^{\mathrm{Fer}}$, we have the following, forgetting the dependence in $s$,
\begin{multline*}
\mathcal{E}^\Delta_{\bm{\eta},\bm{\xi},\bm{\varphi},\bm{\psi}}\left[
	\prod_{i=1}^n\langle{u}_{k_i}\rangle_{\bm{\eta}+\I\sqrt{\vert C_0\vert}\bm{\varphi}}
	\langle {u}_{k_i}\rangle_{\bm{\xi}+\I\sqrt{\vert C_0\vert}\bm{\psi}}
\right]\\
=
\sum_{\substack{i_1,\dots,i_n\\j_1,\dots,j_n}}^{N}
\mathcal{E}^\Delta_{\bm{\eta},\bm{\xi},\bm{\varphi},\bm{\psi}}
\left[
	\prod_{i=1}^n
	\left(
		\eta_{i_k}+\I\sqrt{\vert C_0\vert}\varphi_{i_k}
	\right)
	\left(
		\xi_{j_k}+\I\sqrt{\vert C_0\vert}\psi_{j_k}	
	\right)
\right]	
\prod_{m=1}^n{u}_{k_m}(i_m){u}_{k_m}(j_m).
\end{multline*}
Now we can use the Fermionic Wick theorem \ref{lem:wick} in order to compute these Gaussian moments,
\[
\mathcal{E}^\Delta_{\bm{\eta},\bm{\xi},\bm{\varphi},\bm{\psi}}\left[
	g_s^{\mathrm{Fer}}(\mathbf{k})
\right]
=
\sum_{\substack{i_1,\dots,i_n\\j_1\dots j_n}}^N
\det \left(
	\left(
		\Delta-C_0\mathrm{Id}
	\right)_{i_pj_q}
	u_{k_p}(i_p)u_{k_q}(j_q)
\right)_{p,q=1}^n.
\]
Thus by multilinearity of the determinant we obtain that
\begin{equation*}
\mathcal{E}^\Delta_{\bm{\eta},\bm{\xi},\bm{\varphi},\bm{\psi}}\left[
	g_s^{\mathrm{Fer}}(\mathbf{k})
\right]
=
\det\left(
	\sum_{i,j=1}^{N}
	\left(
		\Delta-C_0\mathrm{Id}
	\right)_{ij}{u}_{k_p}(i){u}_{k_q}(j)
\right)_{p,q=1}^n
\hspace{-2em}=
\det\left(
	\sum_{i,j=1}^{N}
		\Delta_{ij}{u}_{k_p}(i){u}_{k_q}(j)
		-
		C_0\mathds{1}_{k_p=k_q}
\right)_{p,q=1}^n.
\end{equation*}
Now, we consider the following covariance matrix
\[
\Delta_{ij}=\sum_{\alpha\in I}q_\alpha(i)q_\alpha(j)\quad\text{for }i,j\in\unn{1}{N}.
\]
Thus we can finally see that the entries of the matrix we take the determinant of are given by, for $\alpha,\beta\in\unn{1}{n},$
\[
\sum_{i,j=1}^{N}\Delta_{ij}{u}_{k_\alpha}(i){u}_{k_\beta}(j)
-C_0\mathds{1}_{k_\alpha=k_\beta}=\sum_{i\in I}\scp{\mathbf{q}_i}{u_{k_\alpha}}\scp{\mathbf{q}_i}{u_{k_\beta}}-C_0\mathds{1}_{k_\alpha=k_\beta}=p_{k_\alpha k_\beta}(s).
\]
\end{proof}

We constructed a Gaussian integral to represent our Fermionic observable, given by a determinant, as a Gaussian moment. The same construction can be done for the Bosonic observable. Indeed, the Hafnian from \eqref{eq:perfobs} can also be represented by a Gaussian moment \cite{cook}. We explain here the construction of the Bosonic observable from the original moment observable from \cite{bourgade2017eigenvector}.
\[
\text{Let}\quad
\mathbf{q}=\mathbf{q}^{(1)}+\I \sqrt{\frac{\vert I\vert}{N}}\mathbf{q}^{(2)}
\quad\text{with}\quad
\mathbf{q}^{(1)}_{\alpha}=\mathcal{N}_{\alpha}\mathds{1}_{\alpha\in I}
\quad\text{and}\quad
\mathbf{q}^{(2)}_{\alpha}
=
\mathcal{N}_{\alpha}^\prime
\quad\text{for}\quad
\alpha\in\unn{1}{N}
\] 
where $(\mathcal{N}_\alpha)_{\alpha\in I}$ and $(\mathcal{N}^\prime_{\alpha})_{\alpha=1}^N$ are two independent families of independent centered Gaussian with unit variance.
We then have the following identity, where $\mathds{E}_{\mathbf{q}}$ denotes the expectation with respect to the two families of Gaussian random variables
\begin{equation}\label{eq:cook}
f^{\mathrm{Bos}}_s(\bm{\xi})
=
\frac{1}{\mathcal{M}(\bm{\xi})}
\mathds{E}\left[
	\mathds{E}_{\mathbf{q}}\left[
		\prod_{i=1}^N
		\scp{\mathbf{q}}{u_k^s}^{2\xi_i}
		\right]
	\middle\vert
	\bm{\lambda}	
\right].
\end{equation}
Indeed, the Hafnian appears when using Wick's rule. We give the construction in the simplest case where the fluctuations are given by
\[
p_{kk}(s)=\sum_{\alpha\in I}u_k^s(\alpha)^2-\frac{\vert I\vert}{N}
\quad\text{and}\quad
p_{k\ell}(s)=\sum_{\alpha\in I}u_k^s(\alpha)u_\ell^s(\alpha)
\] 
but the same generalization by changing the centering and adding correlation between the Gaussian random variables can be executed to obtain the observable in its most general form.  From this construction, it is clear that $f^{\mathrm{Bos}}_s$ follows the eigenvector moment flow as it is a direct consequence of \cite{bourgade2017eigenvector}. 
\begin{remark}
We gave here a proof of Theorem \ref{theo:emf} with supersymmetry and a link to the first observable following this equation from \cite{bourgade2017eigenvector}. However, knowing Proposition \ref{prop:generator}, it is possible to give a combinatorial proof of the theorem with no consideration of Grassmann variables but simply of the properties of the determinant. We give this proof in Appendix \ref{app:proof}.
\end{remark}

\section{A priori estimates on the dynamics}\label{sec:apriori}
In this section, we derive or recall some a priori estimates on eigenvalues or eigenvectors along the dynamics. The proof of Theorem \ref{theo:mainresult} is based upon the three-step strategy used to prove universality of eigenvalues and eigenvectors of random matrices first introduced in \cites{erdos2010bulk, erdos2011universality} (see \cites{erdos2017dynamical} for recent book on the subject). The first step of the strategy is a local law. The second step consists on a short time relaxation by the Dyson Brownian motion and is developed in Section \ref{sec:relaxation}. Finally, the last step corresponds to comparison between our model and the dynamics at a small time $s$. The next subsection is dedicated to local laws for our model.

\subsection{Local laws}

A local law consists of a high-probability bound on the resolvent of our generalized Wigner matrix controlling it down to the optimal scale $N^{-1+\varepsilon}$ for any $\varepsilon>0$.

Define the resolvent $G$ and the Stieltjes transform of the semicircle law $m$ to be for $z\in\mathbb{C}$ with $\Im z>0$
\begin{equation}\label{eq:resolvent}
G(z)=\sum_{k=1}^N\frac{\vert u_k\rangle \langle u_k\vert}{\lambda_k-z}\quad\text{and}\quad m(z)=\int\frac{\D \rho_{\mathrm{sc}}(x)}{x-z}=\frac{-z+\sqrt{z^2-4}}{2}
\quad\text{with}\quad
\rho_{\mathrm{sc}}(\D x)
=
\frac{\mathds{1}_{[-2,2]}(x)}{2\pi}\sqrt{4-x^2}\D x
\end{equation}
where the choice of the square root is given by $m$ being holomorphic in the upper half plane and $m(z)\rightarrow 0$ as $z\rightarrow \infty$.
We need two forms of local law, one is an averaged local law on the Stieltjes transform of the empirical spectral distribution of $W$, $s(z)=N^{-1}\Tr G(z),$ the other is on the resolvent as a quadratic form, also called an isotropic local law.
\begin{theorem}[\cites{erdos2012rigidity, bloemendal2014isotropic}]
Consider the following spectral domain, for any (small) $\omega>0,$
\[
\mathcal{D}_{\omega}=
\left\{
	z=E+\I \eta,\,\vert E\vert\leqslant \omega^{-1},\,N^{-1+\omega}\leqslant \eta\leqslant \omega^{-1}
\right\},
\]
then we have for any positive $\varepsilon$ and $D>0$,
\begin{equation}\label{eq:avelocal}
\sup_{z\in\mathcal{D}_\omega}\mathds{P}
\left(
	\left\vert
		s(z)-m(z)
	\right\vert\geqslant \frac{N^\varepsilon}{N\eta}
\right)\leqslant N^{-D},
\end{equation}
and for any vector $\mathbf{v},$ $\mathbf{w} \in\mathbb{R}^N$, for any positive $\varepsilon$ and $D$,
\begin{equation}\label{eq:isolocal}
\sup_{z\in\mathcal{D}_\omega}\mathds{P}\left(
	\left\vert
		\scp{\mathbf{v}}{G(z)\mathbf{w}}-m(z)\scp{\mathbf{v}}{\mathbf{w}}
	\right\vert
	\geqslant
	N^\varepsilon
	\Vert \mathbf{v}\Vert \Vert\mathbf{w}\Vert
	\left(
		\sqrt{\frac{\Im m(z)}{N\eta}}+\frac{1}{N\eta}
	\right)
\right)\leqslant N^{-D}.
\end{equation}
\end{theorem}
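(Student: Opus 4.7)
The plan is to follow the now-standard three-stage strategy for local laws of generalized Wigner matrices. First I would derive a perturbed self-consistent equation for the diagonal resolvent entries via the Schur complement formula: for each $i$,
\[
G_{ii}(z)^{-1} = -z - \sum_{j \neq i} s_{ij} G^{(i)}_{jj}(z) + \Upsilon_i,
\]
where $G^{(i)}$ is the minor resolvent and $\Upsilon_i$ collects a diagonal entry $h_{ii}$ and a quadratic form $\sum_{j,k \neq i} h_{ij}(G^{(i)}_{jk} - \delta_{jk} s_{ij}) h_{ki}$. Large-deviation estimates for quadratic forms, together with the finite-moment assumption, control each $|\Upsilon_i|$ by roughly $\sqrt{\Im m / (N\eta)}$ with overwhelming probability.

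Next I would establish a weak bound $|s(z) - m(z)| \leq N^{-c}$ by a stochastic continuity argument descending from $\eta = \omega^{-1}$, where the bound is trivial, and bootstrap it to the optimal scale $\eta \geq N^{-1+\omega}$. The core analytical input is the stability of the Dyson equation $1 + zm + m^2 = 0$ around its solution, combined with the \emph{fluctuation averaging} lemma: for any matrix $T$ in a suitable class, the average $N^{-1}\sum_j T_{ij} \Upsilon_j$ enjoys a square-root cancellation and so is much smaller than $\max_j |\Upsilon_j|$. Iterating this, together with the rigidity of the equation, gives $|s(z) - m(z)| \leq N^{\varepsilon}/(N\eta)$. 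This fluctuation averaging step is the main technical obstacle; it requires a careful high-moment computation exploiting the row-wise independence of $W$, and for generalized Wigner matrices one works with the Stieltjes transform of a doubly-indexed vector $(G_{ii})_{i}$ since the variance profile is not constant.

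Finally, for the isotropic local law I would expand $\langle \mathbf{v}, G(z) \mathbf{w} \rangle - m(z)\langle \mathbf{v}, \mathbf{w}\rangle = \sum_{i}\bar v_i w_i(G_{ii}-m) + \sum_{i\neq j}\bar v_i w_j G_{ij}$. The diagonal part is controlled directly by the averaged law. The off-diagonal part satisfies a perturbative expansion of the form $G_{ij} = -G_{ii}\sum_{k \neq i} h_{ik} G^{(i)}_{kj}$, and the required $\sqrt{\Im m/(N\eta)}$ scaling comes from a polynomialization argument (as in \cite{bloemendal2014isotropic}): one bounds high moments of the bilinear form directly, using repeated resolvent expansions and the Ward identity $\sum_k |G_{ik}|^2 = \eta^{-1} \Im G_{ii}$ to replace squared off-diagonal entries by $\Im m/(N\eta)$. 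The main obstacle here is organizing the combinatorics of the polynomial expansion so that every term which survives the expectation is bounded by the target size, but once the averaged local law has been established, this is a mechanical computation. Both statements then follow on the full domain $\mathcal{D}_\omega$ by a standard net argument exploiting the Lipschitz continuity of $z \mapsto G(z)$ at scale $\eta$.
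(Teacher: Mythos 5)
The paper does not prove this theorem; it simply cites it from \cite{erdos2012rigidity} and \cite{bloemendal2014isotropic} and uses it as a black box. Your sketch (Schur complement, self-consistent Dyson equation with fluctuation averaging and bootstrapping for the averaged law, then polynomialization with Ward identities for the isotropic law, and a net argument over $\mathcal{D}_\omega$) faithfully reproduces the strategy of those cited references, so it is the same route the paper implicitly relies on.
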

As a corollary of this theorem, one obtains the complete delocalization of eigenvectors as an overwhelming probability bound. We need this optimal estimate (up to logarithmic corrections) in order to control eigenvectors.
\begin{corollary}\label{cor:deloc}
Let $k\in\unn{1}{N}$ and $\mathbf{q}\in\mathbb{R}^N$ such that $\Vert \mathbf{q}\Vert_2=1$, we have, for any $D$ and any $\varepsilon$ positive
\begin{equation}\label{eq:deloc}
\mathds{P}\left(
	\left\vert
		\scp{\mathbf{q}}{u_k}
	\right\vert
	\geqslant
	\frac{N^\varepsilon}{\sqrt{N}}
\right)\leqslant N^{-D}.
\end{equation}
\end{corollary}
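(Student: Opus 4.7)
The plan is to deduce delocalization from the isotropic local law via the standard spectral-resolution argument. Writing $z = E + \I\eta$ with $\eta > 0$, the spectral decomposition of $G$ yields
\[
\Im \scp{\mathbf{q}}{G(z)\mathbf{q}} = \sum_{j=1}^N \frac{\eta \, |\scp{\mathbf{q}}{u_j}|^2}{(\lambda_j - E)^2 + \eta^2},
\]
and since every term is non-negative, retaining only the $j = k$ summand at $E = \lambda_k$ already produces
\[
|\scp{\mathbf{q}}{u_k}|^2 \leq \eta \cdot \Im \scp{\mathbf{q}}{G(\lambda_k + \I\eta)\mathbf{q}}.
\]

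To convert this into a usable bound, I would pick $\omega \in (0, 2\varepsilon)$ and set $\eta = N^{-1+\omega}$. Since $|m(z)|$ and $\Im m(z)$ are bounded on the spectral domain $\mathcal{D}_\omega$, applying the isotropic local law \eqref{eq:isolocal} with $\mathbf{v} = \mathbf{w} = \mathbf{q}$ (of unit norm) gives, for any fixed $z \in \mathcal{D}_\omega$,
\[
|\scp{\mathbf{q}}{G(z)\mathbf{q}}| \leq C + N^{\varepsilon'}\left(\sqrt{\frac{1}{N\eta}} + \frac{1}{N\eta}\right) = O(1)
\]
with probability at least $1 - N^{-D}$, provided $\varepsilon' > 0$ is chosen sufficiently small compared to $\omega$.

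The remaining step is to upgrade this pointwise control to uniform control in $z$, since we wish to evaluate the resolvent at the random point $z = \lambda_k + \I\eta$. For $\Im z, \Im z' \geq \eta$, one has the deterministic Lipschitz estimate $\|G(z) - G(z')\| \leq |z - z'|/\eta^2$, so a polynomial grid of $O(N^C)$ points inside $\mathcal{D}_\omega$ together with a union bound yields the uniform version at the cost of adjusting $D$ and $\varepsilon'$. Standard rigidity of eigenvalues ensures $|\lambda_k| \leq \omega^{-1}$ with overwhelming probability, placing $\lambda_k + \I\eta$ in $\mathcal{D}_\omega$. Combining the bounds gives $|\scp{\mathbf{q}}{u_k}|^2 \leq \eta \cdot O(1) = O(N^{-1+\omega})$, which proves the claim since $\omega < 2\varepsilon$ is arbitrary.

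The main, and essentially only, technical point is the net argument used to make the isotropic local law uniform in $z$; everything else is the spectral identity displayed above together with boundedness of $m$ on $\mathcal{D}_\omega$.
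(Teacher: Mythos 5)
Your proposal is correct and is the standard way to deduce delocalization from the isotropic local law; the paper states Corollary \ref{cor:deloc} without proof as an immediate consequence of the local law, and your argument (spectral decomposition to isolate the $j=k$ term, choice $\eta = N^{-1+\omega}$ with $\omega < 2\varepsilon$, Lipschitz-grid-plus-union-bound to upgrade pointwise control to uniform control at the random point $\lambda_k + \I\eta$, rigidity to place that point in $\mathcal{D}_\omega$) is precisely the argument the paper takes for granted.
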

Another corollary of the optimal bound \ref{eq:avelocal} is a rigidity estimate on the eigenvalues. It states that eigenvalues of our generalized Wigner matrix are close to their deterministic classical locations. These locations, denoted $(\gamma_k)$ are defined in the following implicit way:
\[
\int_{-\infty}^{\gamma_k} \D \rho_{\mathrm{sc}}(x)
=
\frac{k}{N}.
\]
\begin{theorem}[\cite{erdos2012rigidity}]
If we denote $\hat{k}=\min(k,N+1-k)$, for any $\varepsilon>0$ and any $D>0$ we have
\begin{equation}
\mathds{P}\left(
	\text{there exists }k\in\unn{1}{N},\,
	\left\vert 
		\lambda_k-\gamma_k
	\right\vert
	\geqslant
	\hat{k}^{-1/3}N^{-2/3+\varepsilon}
\right)
\leqslant
N^{-D}.\label{eq:rigidity}
\end{equation}
\end{theorem}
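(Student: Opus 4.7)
The plan is to derive the rigidity bound from the averaged local law \eqref{eq:avelocal} via Helffer-Sjöstrand functional calculus, following the standard two-step scheme: first control the eigenvalue counting function $\mathcal{N}(E) = |\{k : \lambda_k \leqslant E\}|$, then invert the relation $\mathcal{N}(\gamma_k) = k$ to translate fluctuations of $\mathcal{N}$ into fluctuations of individual eigenvalues. A final union bound over $k \in \unn{1}{N}$, with a slight worsening of $\varepsilon$, upgrades a pointwise high-probability bound to the uniform statement.

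For the counting-function step, fix a smooth approximation $f_E$ of $\mathbf{1}_{(-\infty,E]}$ mollified at scale $\eta_0 = N^{-1+\omega}$, so that the smoothing error contributes at most $O(N^\omega)$ to $\mathcal{N}(E)$. The Helffer-Sjöstrand representation gives
\[
\sum_k f_E(\lambda_k) - N\int f_E\,\mathrm{d}\rho_{\mathrm{sc}}
= \frac{N}{\pi}\int \partial_{\bar z}\tilde f_E(z)\,\bigl(s(z)-m(z)\bigr)\,\mathrm{d}^2 z,
\]
where $\tilde f_E$ is an almost-analytic extension. Integrating the bound $|s(z)-m(z)| \leqslant N^\varepsilon/(N\eta)$ from \eqref{eq:avelocal} on $\Im z \geqslant \eta_0$, while using a Ward-identity estimate to handle the contribution from $|\Im z| \leqslant \eta_0$, produces $|\mathcal{N}(E) - N\int_{-\infty}^E \rho_{\mathrm{sc}}| \leqslant N^\varepsilon$ with probability $\geqslant 1 - N^{-D}$, uniformly over a dense net of energies $E$.

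To pass from $\mathcal{N}$ to individual eigenvalues, I would exploit the square-root vanishing of $\rho_{\mathrm{sc}}$ at $\pm 2$: if $\hat k/N$ is small, then $\gamma_k$ sits at distance $\asymp (\hat k/N)^{2/3}$ from the nearest edge and $\rho_{\mathrm{sc}}(\gamma_k) \asymp (\hat k/N)^{1/3}$. From $|\mathcal{N}(\lambda_k) - k| = |\mathcal{N}(\lambda_k) - \mathcal{N}(\gamma_k)| \leqslant N^\varepsilon$ and the mean-value theorem applied to the increasing function $\mathcal{N}$, we obtain $|\lambda_k - \gamma_k| \lesssim N^\varepsilon/(N\rho_{\mathrm{sc}}(\gamma_k)) \asymp \hat k^{-1/3}\,N^{-2/3+\varepsilon}$. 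This matches the claimed scale $N^{-1+\varepsilon}$ in the bulk ($\hat k \asymp N$) and $N^{-2/3+\varepsilon}$ at the extreme edge ($\hat k = O(1)$).

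The main obstacle is the edge regime: the bulk estimate $|\mathcal{N}(E) - N\int \rho_{\mathrm{sc}}| \leqslant N^\varepsilon$ is useless when $\hat k \leqslant N^\varepsilon$, since it cannot rule out an extremal eigenvalue wandering far outside the support. Overcoming this requires a sharpened edge local law exploiting the fact that $\Im m(z) \asymp \sqrt{\kappa+\eta}$ with $\kappa = \mathrm{dist}(\Re z, [-2,2])$; the improved size of $\Im m$ near the edge shrinks the Helffer-Sjöstrand integrand and forces $|\mathcal{N}(E) - N\int_{-\infty}^E \rho_{\mathrm{sc}}|$ to actually be $o(1)$ at energies slightly beyond $\pm 2$, pinning the extreme eigenvalues to a window of size $N^{-2/3+\varepsilon}$. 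This refinement is the technical core of \cite{erdos2012rigidity}; combined with the bulk argument above, it yields the uniform rigidity statement \eqref{eq:rigidity}.
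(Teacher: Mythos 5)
The paper does not prove this theorem: it is imported verbatim from \cite{erdos2012rigidity} and used as a black box (and restated along the dynamics in Lemma \ref{lem:goodset}). There is therefore no in-paper proof to compare against, so the evaluation is against the cited literature.

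Your outline is a faithful high-level account of how rigidity is actually established in \cite{erdos2012rigidity} and its successors: one propagates the averaged local law \eqref{eq:avelocal} to the eigenvalue counting function via the Helffer--Sj\"ostrand representation at mollification scale $\eta_0 = N^{-1+\omega}$, obtains $\left\vert\mathcal{N}(E) - N\int_{-\infty}^E\D\rho_{\mathrm{sc}}\right\vert \leqslant N^{\varepsilon}$ uniformly on a net, and then inverts via the mean-value theorem using $\rho_{\mathrm{sc}}(\gamma_k) \asymp (\hat{k}/N)^{1/3}$ to convert counting-function fluctuations of size $N^{\varepsilon}$ into the window $\hat{k}^{-1/3} N^{-2/3+\varepsilon}$. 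Your identification of the edge as the genuinely delicate regime is also correct: the bulk counting-function bound alone does not pin down extremal eigenvalues, and one needs to rule out eigenvalues outside the classical support up to scale $N^{-2/3+\varepsilon}$.

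One small point of phrasing: what makes the edge work is not a \emph{separately sharpened} local law on top of \eqref{eq:avelocal}. The uniform $N^{\varepsilon}/(N\eta)$ averaged bound is already the correct input throughout $\mathcal{D}_\omega$; the improvement at the edge emerges when evaluating the Helffer--Sj\"ostrand integral near $\pm 2$, where the test function's support is narrow and $\Im m(z) \asymp \sqrt{\kappa+\eta}$ (with $\kappa=\mathrm{dist}(\Re z,[-2,2])$) shrinks the relevant contributions, giving counting-function control of order $o(1)$ just beyond the spectral edge. That is essentially the content of what you wrote, just attributed to the integration step rather than to a refined a priori resolvent bound. With that reading, your sketch correctly captures the structure of the argument in the cited reference.
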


The estimates \eqref{eq:avelocal}, \eqref{eq:isolocal}, \eqref{eq:deloc} and \eqref{eq:rigidity} hold along the dynamics \eqref{eq:dyson}. This is the statement of the following lemma.

\begin{lemma}[\cite{bourgade2017eigenvector}*{Lemma 4.2}]\label{lem:goodset}
Let $\xi, \omega>0,$.
Consider $W$ a generalized Wigner matrix and consider the dynamics \eqref{eq:dyson} $(H_s)_{0\leqslant s\leqslant 1}$ with $H_0=W$. Define the resolvent and its normalized trace for $z$ in the upper plane,
\[
G^s(z) = (H_s-z)^{-1}\quad\text{and}\quad m_s(z)=\frac{1}{N}\Tr G^s(z).
\] 
It induces a measure on the space of eigenvalues and eigenvectors $(\bm{\lambda}(s),\bm{u}^s)$ for $0\leqslant s\leqslant 1$ such that the following event $\mathcal{A}_1(\xi)$ holds with overwhelming probability in the sense that for any $D>0$,
\[
\mathds{P}\left(
	\mathcal{A}_1(\xi)
\right)\geqslant 1-N^{-D}
\]
\begin{itemize}
\item We have rigidity of eigenvalues: 
\[
\text{For all } s\in[0,1], \,\vert \lambda_k(s)-\gamma_k\vert \leqslant N^{-2/3+\xi}(\hat{k})^{-1/3} \text{ uniformly in }k\in[\![1,N]\!].
\]
\item The averaged local law hold: for all $s\in[0,1]$, uniformly in $z=E+\I\eta\in\mathcal{D}_\omega$,
\(
\vert m_s(z)-m(z)\vert\leqslant \frac{N^\xi}{N\eta}
\)
\item When we condition on the trajectory $(\bm{\lambda}(s))_{s\in[0,1]}\in \mathcal{A}_1$, the entrywise local law holds: for all $s\in[0,1]$, uniformly in $z=E+\I \eta\in\mathcal{D}_\omega$ and $\alpha,\beta\in\unn{1}{N}$,
\[
\left\vert
	G^s_{\alpha\beta}(z)-m(z)\delta_{\alpha\beta}
\right\vert
\leqslant N^\xi
\left(
	\sqrt{\frac{\Im m(z)}{N\eta}}+\frac{1}{N\eta}
\right),
\]
and eigenvector delocalization holds: $\forall s\in[0,1],$ $\Vert {u_k^s}\Vert_\infty^2\leqslant N^{-1+\xi}$ uniformly in $k\in[\![1,N]\!].$
\end{itemize} 
\end{lemma}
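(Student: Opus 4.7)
The plan is to bootstrap the static local laws \eqref{eq:avelocal}, \eqref{eq:isolocal}, \eqref{eq:rigidity} and the delocalization \eqref{eq:deloc} from a pointwise-in-$s$ statement for generalized Wigner matrices to a uniform-in-$s$ statement along the OU dynamics \eqref{eq:dyson}, and then to handle the conditional version in the last bullet. The SDE \eqref{eq:dyson} has the explicit solution
\[
H_s = e^{-s/2} H_0 + \int_0^s e^{-(s-u)/2}\frac{\D B_u}{\sqrt{N}},
\]
so $H_s$ is again a symmetric matrix with independent centered entries, whose variances $e^{-s} s_{ij} + (1-e^{-s})/N$ satisfy the generalized Wigner axioms uniformly in $s \in [0,1]$ (with possibly modified constants $c, C$ and row-sums equal to $1$ up to an $O(1/N)$ correction). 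The static theorems therefore apply pointwise in $s$ to $H_s$.

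To upgrade pointwise-in-$s$ bounds into uniform-in-$s$ bounds, I would introduce a polynomial grid $s_j = j N^{-\kappa}$ with $\kappa$ large in terms of $D$, apply the static estimates via a union bound at each grid time and at each point of a sufficiently dense grid of $z \in \mathcal{D}_\omega$, and interpolate using the Lipschitz dependence of $G^s(z)$ and $\lambda_k(s)$ on $s$. The relevant Lipschitz constants follow from \eqref{eq:dyson} and \eqref{eq:dysonval} and grow only polynomially in $N$ on $\mathcal{D}_\omega$ (using $\Vert G^s(z)\Vert \leq \eta^{-1}$ and the rigidity already available at the preceding grid point to control $\partial_s H_s$), so the interpolation loss is absorbed by the $N^\xi$ slack in the statement. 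The same grid-plus-Lipschitz argument applied to the eigenvector SDE \eqref{eq:dysonvect} yields the uniform delocalization estimate $\Vert u_k^s\Vert_\infty^2\leqslant N^{-1+\xi}$.

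The main obstacle is the third bullet: the entrywise local law and delocalization must hold \emph{conditionally} on the eigenvalue trajectory belonging to $\mathcal{A}_1$. The key structural input is the coupling in Definition~\ref{def:dyson}, where eigenvalues and eigenvectors evolve under two \emph{independent} Brownian motions $B$ and $\widetilde{B}$, so the conditional law of the eigenvectors and resolvent entries given $\bm{\lambda}$ retains the independent randomness driving \eqref{eq:dysonvect}. Quantitatively, the unconditional bound $\mathds{P}(\text{bad event}) \leq N^{-D}$ derived from \eqref{eq:isolocal} and \eqref{eq:deloc} combined with Markov's inequality in the $\bm{\lambda}$-variable yields $\mathds{P}(\text{bad event} \mid \bm{\lambda}) \leq N^{-D/2}$ on an event of trajectories of probability at least $1 - N^{-D/2}$. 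Absorbing this event into the definition of $\mathcal{A}_1$ and taking $D$ sufficiently large at the outset gives the conditional estimate with overwhelming probability, completing the proof.
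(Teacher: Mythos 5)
Your overall scheme is the standard one and matches how \cite{bourgade2017eigenvector}*{Lemma 4.2} is proved (the present paper only cites it): use the closed-form OU solution to observe that $H_s$ is again generalized Wigner for each fixed $s$, apply the static local laws and rigidity at each time, upgrade to uniformity over $s$ and $z$ via a polynomial grid and a continuity argument, and obtain the conditional statement in the third bullet from the unconditional one via Fubini and Markov in the $\bm{\lambda}$-variable. The Markov step is the right mechanism: if the unconditional failure probability is $N^{-2D}$, then the set of trajectories $\bm{\lambda}$ on which the conditional failure probability exceeds $N^{-D}$ has probability at most $N^{-D}$, and one absorbs it into $\mathcal{A}_1$. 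Minor point: the row sums $\sum_j\bigl(e^{-s}s_{ij}+(1-e^{-s})/N\bigr)$ are preserved \emph{exactly}, not merely up to $O(1/N)$.

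One step, though, is not quite right as written: you claim that ``the same grid-plus-Lipschitz argument applied to the eigenvector SDE \eqref{eq:dysonvect} yields the uniform delocalization estimate.'' Eigenvectors are \emph{not} Lipschitz in $s$ along the Dyson flow: the diffusion coefficient in \eqref{eq:dysonvect} contains factors $1/(\lambda_k(s)-\lambda_\ell(s))$, so between two grid times where a gap nearly closes, $u_k^s$ can rotate by an $O(1)$ amount, and rigidity (which controls $\lambda_k-\gamma_k$, not the gaps $\lambda_{k+1}-\lambda_k$) does not rule this out. The correct route is to deduce delocalization at each fixed $s$ from the isotropic local law for $H_s$ via $\vert u_k^s(\alpha)\vert^2\leqslant 2\eta\,\Im G_{\alpha\alpha}^s(\lambda_k(s)+\I\eta)$ with $\eta=N^{-1+\omega}$, and then the uniformity in $s$ follows because it is the \emph{resolvent} $G^s(z)$ (not the individual eigenvectors) that is genuinely Lipschitz in $H_s$ with constant $\eta^{-2}$, so the grid-plus-continuity argument applies to $G^s$ and the bound is propagated at every $s$ in the interval. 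With this replacement the proposal is complete.
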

This lemma allows us to prove most results deterministically by working on the event $\mathcal{A}_1$ which holds with overwhelming probability. Note that we keep only the dependence in $\xi$ in the definition of $\mathcal{A}_1$ but it also depends on the choice of $\omega$.
\subsection{Quantum unique ergodicity}
In this subsection, we state a priori results we need on the $p_{k\ell}(s)$. This overwhelming probability bound for $p_{k\ell}$ was studied for Gaussian divisible ensembles in \cite{bourgade2018random} in order to study band matrices but only consider bulk eigenvectors. While they only consider $\vert I\vert\geqslant cN$ for some constant $c>0$, we adapt the proof to any $\vert I\vert$ and we obtain the following result in the case of generalized Wigner matrices.

From now on, we consider the case where
\begin{equation}\label{eq:defpkk2}
p_{kk}(s)=\sum_{\alpha\in I}u_k^s(\alpha)^2-\frac{\vert I\vert}{N}
\quad\text{and}\quad
p_{k\ell}(s)= \sum_{\alpha\in I}u_k^s(\alpha)u_\ell^s(\alpha).
\end{equation}
\begin{proposition}\label{lem:queedge}
First denote the following error parameter,
\[
\Psi_1(s) = \frac{\vert I\vert}{N^{3/2}s^2}+\sqrt{\frac{\vert I\vert}{N^2s^{3}}}.
\] 
Let $\omega>0$, we have for any $\varepsilon$ and $D$ positive and $s\in[N^{-1/3+\omega},1]$,
\[
\mathds{P}\left(
\sup_{k,\ell\in\unn{1}{N}}
\vert p_{kk}(s)\vert
+
\vert p_{k\ell}(s) \vert
\geqslant
N^\varepsilon\Psi_1(s)
\right)\leqslant N^{-D}.
\]
\end{proposition}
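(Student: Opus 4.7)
The plan is to adapt the quantum unique ergodicity argument from \cite{bourgade2018random} to our setting by using the Bosonic eigenvector moment flow \eqref{eq:discs2} as the main engine, together with the a priori estimates from Lemma \ref{lem:goodset}. The target is a high-moment bound of the form $\EL{|p_{k\ell}(s)|^{2n}} \leqslant (C_n\,\Psi_1(s))^{2n}$ (and the analogous bound for $p_{kk}(s)$) on the good event $\mathcal{A}_1(\xi)$; by Markov's inequality with $n$ large (depending on $\varepsilon, D$) and a union bound over $(k,\ell)\in\unn{1}{N}^2$, this suffices to conclude the overwhelming-probability statement.

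First I would encode the $2n$-th moments of $p_{k\ell}$ and $p_{kk}$ as Bosonic observables $f^{\mathrm{Bos}}_s(\bm{\xi})$ with particle configurations supported on $\{k,\ell\}$ and total particle number $n$; via the Wick-expansion representation \eqref{eq:cook} of $f^{\mathrm{Bos}}_s$, the relevant moments are extracted from explicit linear combinations of such observables. Then I would apply the parabolic equation \eqref{eq:discs2} and track the relaxation of $f^{\mathrm{Bos}}_s$ towards its equilibrium value, which can be computed for example by testing against the Haar-distributed reference eigenbasis. The contracting nature of this parabolic equation in $L^\infty$ can be exploited via a maximum principle on the configuration space, or equivalently via an energy estimate in which the jump kernel $(\lambda_k-\lambda_\ell)^{-2}$ supplies the dissipation.

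Two ingredients are crucial. First, the rigidity and local laws of Lemma \ref{lem:goodset} let us bound the jump rates $(\lambda_k(s)-\lambda_\ell(s))^{-2}$ in terms of the classical locations $(\gamma_k)$: for neighbouring bulk eigenvalues these rates are of order $N^2$, and near the spectral edge of order $N^{4/3}$, which is precisely why the condition $s\geqslant N^{-1/3+\omega}$ appears and why the result covers edge eigenvectors. Second, the initial data at $s=0$ can be controlled because $H_0=W$ is a generalized Wigner matrix, so complete delocalization \eqref{eq:deloc} and the entrywise local law provide the deterministic starting point for the parabolic iteration.

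The main obstacle will be obtaining the correct dependence on $|I|$ in the error $\Psi_1(s)$, since the proof of \cite{bourgade2018random} is stated for $|I|\asymp N$. The two summands in $\Psi_1(s)$ have distinct origins: the bias term $|I|/(N^{3/2}s^2)$ comes from the deterministic drift of $p_{kk}$ along the flow (controlled by the expectation of $\Im G^s$ through the averaged local law), while the fluctuation term $\sqrt{|I|/(N^2 s^3)}$ corresponds to the variance predicted by a central-limit heuristic for a single eigenvector's coordinates. To recover the correct scaling in $|I|$, one must keep the $|I|$-weights explicit in the Hafnian structure of $f^{\mathrm{Bos}}_s$ and exploit the isotropic local law \eqref{eq:isolocal} to propagate a sharp $\ell^2$-type estimate along the dynamics, rather than rely on a crude $L^\infty$ estimate that would lose powers of $|I|/N$.
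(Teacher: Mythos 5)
Your overall route is the same as the paper's: bound high moments of $p_{kk}$ and $p_{k\ell}$ via the Bosonic observable, apply a maximum principle over particle configurations to the flow \eqref{eq:discs2}, use the local laws and rigidity from Lemma \ref{lem:goodset} to estimate the jump rates, and conclude by Markov plus a union bound. The fundamental framework is correct.

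What you are missing is the device that actually closes the Gronwall argument. The maximum principle, applied naively, produces a right-hand side error that is comparable to the quantity you are trying to bound, and the ODE then only says $S \lesssim S$, which is vacuous. The paper escapes this by combining two steps you do not mention. First, when estimating $f_s^{\mathrm{Bos}}(\bm{\xi_m^{k,\ell}})$ one must decompose the Hafnian over the new site $\ell$ according to whether the perfect matching contains the edge $\{(\ell,1),(\ell,2)\}$ or not; the former contributes a factor $p_{\ell\ell}$, resummed against $(\lambda_\ell - z)^{-1}$ into $\frac{1}{N}\sum_{\alpha\in I}\Im(G_{\alpha\alpha}-m)$, while the latter contributes $p_{q_1\ell}p_{q_2\ell}$, controlled by $\sum_\ell p_{k\ell}^2=\sum_{\alpha\in I}u_k(\alpha)^2 = O(N^\xi |I|/N)$; these two contributions are precisely the two summands of $\Psi_1$, which is a different origin than your ``bias versus variance'' heuristic. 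Second, the residual polynomial of degree $n-1$ (resp.\ $n-2$) in the $p_{ij}$'s that multiplies these factors has to be bounded by $S_{s_0,s_1}^{(n-1)/n}$ (resp.\ $S_{s_0,s_1}^{(n-2)/n}$) via Lemma \ref{lem:holder} and a Young inequality. This sub-linear dependence on the supremum is what makes the Gronwall inequality self-improving: iterating over the dyadic-type sequence $s_i=(s_0+s_{i-1})/2$ yields $S_{s_0,s_{i+1}}\leqslant N^{-i\sigma}S_{s_0,s_1}$ until $S$ falls below $\Psi_1^n$. Without Lemma \ref{lem:holder} (which you never invoke) and without the matching-edge decomposition, the parabolic contraction you describe cannot be converted into a quantitative decay; ``testing against the Haar reference'' at equilibrium does not substitute for this, since the argument is not a comparison to equilibrium but a contraction bootstrap. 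Also, the isotropic local law \eqref{eq:isolocal} you propose to use is not needed here; the paper only uses the entrywise and averaged local laws together with delocalization.
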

\begin{remark}
The error term is the sum of two terms and it is not clear whether one is larger than the other since it depends on the regime of $\vert I\vert$ or $s$. 
\end{remark}
Note that in the case of bulk eigenvectors, we have the following overwhelming probability bound from \cite{bourgade2018random} for a general class of initial condition. 
\begin{theorem}[\cite{bourgade2018random}*{Theorem 2.5}]\label{theo:que}
Let $\alpha\in (0,1)$ and a small $\omega>0$, for $k,\ell\in \unn{\alpha N}{(1-\alpha)N}$ (indices in the bulk) and $\vert I\vert \geqslant cN$ for some $c>0$, we have that for any $\varepsilon,D>0$ and $N^{-1+\omega}\leqslant s \leqslant 1$
\[
\mathds{P}\left(
	\vert p_{kk}(s)\vert
	+
	\vert p_{k\ell}(s)\vert
	\geqslant
	\frac{N^\varepsilon}{\sqrt{Ns}}
\right)\leqslant N^{-D}.
\]
\end{theorem}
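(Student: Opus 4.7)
The plan is to analyze the Bosonic observable $f^{\mathrm{Bos}}_{s,\bm{\lambda}}(\bm{\xi})$ from \eqref{eq:perfobs} at configurations $\bm{\xi}$ supported on $\{k,\ell\}$ with total mass $n=|\bm{\xi}|$. Up to combinatorial factors coming from the Hafnian, this observable equals a linear combination of conditional moments $\mathds{E}[p_{kk}(s)^a p_{\ell\ell}(s)^b p_{k\ell}(s)^{2c}\mid\bm{\lambda}]$ with $a+b+2c=n$. If for every fixed $n\in\mathds{N}$ and arbitrarily small $\varepsilon>0$ I can prove the deterministic bound
\begin{equation*}
\bigl|f^{\mathrm{Bos}}_{s,\bm{\lambda}}(\bm{\xi})\bigr|\leqslant N^\varepsilon (Ns)^{-n}
\end{equation*}
uniformly on the rigidity event $\mathcal{A}_1(\xi)$ of Lemma \ref{lem:goodset}, then Markov's inequality applied to a high even-order moment, followed by a union bound over $k,\ell$ in the bulk window, immediately delivers the probability statement at scale $N^\varepsilon/\sqrt{Ns}$.

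First I would control the initial data. On $\mathcal{A}_1(\xi)$ the isotropic local law \eqref{eq:isolocal} combined with the bound $|I|\geqslant cN$ yields $|p_{kk}(0)|+|p_{k\ell}(0)|=\mathcal{O}(N^{-1/2+\xi})$ for any $k\neq\ell$, so $\|f^{\mathrm{Bos}}_0\|_\infty=\mathcal{O}(N^{-n/2+n\xi})$. Then I would work with the parabolic equation \eqref{eq:discs2}, which governs a long-range random walk on particle configurations with jump rates $2\xi_i(1+2\xi_j)(\lambda_i(s)-\lambda_j(s))^{-2}$. On $\mathcal{A}_1$, rigidity \eqref{eq:rigidity} gives $|\lambda_i-\lambda_j|\asymp|i-j|/N$ uniformly for bulk indices, so the generator is comparable to a deterministic long-range walk whose characteristic mixing time on bulk scales matches the hypothesis $s\geqslant N^{-1+\omega}$.

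The core step, following the strategy of \cite{bourgade2018random}, is a short-time relaxation argument for this semigroup. Let $f^{\mathrm{eq}}(\bm{\xi})$ denote the equilibrium value obtained by replacing the eigenvectors by a Haar-distributed orthonormal frame; $f^{\mathrm{eq}}$ is annihilated by the equilibrium generator (Gaussian Wick moments). Setting $g_s\coloneqq f^{\mathrm{Bos}}_s-f^{\mathrm{eq}}$, the task reduces to showing that the semigroup associated with \eqref{eq:discs2} contracts $\|g_0\|_\infty$ by an additional factor $(Ns)^{-n/2}$. I would obtain this via a Nash-type / ultracontractivity estimate on the configuration-space walk, exploiting the fact that each particle jump injects one factor of $(Ns)^{-1}$ by the DBM bulk scaling and that the target moment $f^{\mathrm{eq}}$ acts as an absorbing reference so no cancellation is lost. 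The bulk hypothesis $k,\ell\in\unn{\alpha N}{(1-\alpha)N}$ enters here to guarantee uniform lower bounds on all the relevant gaps, and $|I|\geqslant cN$ ensures that the initial fluctuations $p_{kk}(0),p_{k\ell}(0)$ are already at the natural Gaussian scale $N^{-1/2}$ rather than larger (which is exactly why the generalization to smaller $|I|$ in Proposition \ref{lem:queedge} produces a worse error term).

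The main obstacle is extracting the sharp decay rate $(Ns)^{-n}$. A naive maximum principle on \eqref{eq:discs2} only gives boundedness, not decay, and improving to the factor $(Ns)^{-n}$ requires combining a contraction estimate for the configuration walk (via the Dirichlet form weighted by $(\lambda_i-\lambda_j)^{-2}$), a careful accounting of the cancellation between $f^{\mathrm{Bos}}_s$ and $f^{\mathrm{eq}}$ so that the initial $n/2$ powers of $N^{-1/2}$ survive the flow, and an iteration over the layers $n=1,2,\dots$ so that each additional particle contributes the gain $(Ns)^{-1}$ uniformly. Controlling this iteration while staying on $\mathcal{A}_1(\xi)$ with $\xi\ll\varepsilon$ is the technical heart of the argument.
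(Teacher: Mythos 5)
Note first that the paper does not prove Theorem \ref{theo:que}; it is quoted from \cite{bourgade2018random}, and the closest argument the paper actually carries out is the adaptation in Proposition \ref{lem:queedge}, so I compare your proposal to that. Your overall frame (Bosonic observable, high even moments, Markov, union bound, rigidity on $\mathcal{A}_1$) is the right one, but the core of your argument has two genuine gaps. First, your initial-data step is not available: the isotropic local law \eqref{eq:isolocal} does \emph{not} give $\vert p_{kk}(0)\vert+\vert p_{k\ell}(0)\vert=\mathcal{O}(N^{-1/2+\xi})$. For a single eigenvector of the initial generalized Wigner matrix, that bound is precisely a QUE-type statement of the kind being proven; the local law and delocalization only yield the essentially trivial bound $\vert p_{kk}(0)\vert\lesssim N^{\xi}$ when $\vert I\vert\asymp N$ (one can bound local averages of $p_{\ell\ell}$ over $\sim N\eta$ eigenvectors, not an individual one). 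Hence the strategy ``small initial data plus semigroup contraction'' collapses at its first step. Second, your target deterministic bound $\vert f^{\mathrm{Bos}}_s(\bm{\xi})\vert\leqslant N^{\varepsilon}(Ns)^{-n}$ has the wrong exponent: since $p_{kk}$ is asymptotically Gaussian of size $N^{-1/2}$ when $\vert I\vert\asymp N$, the $n$-particle observable is of size $(Ns)^{-n/2}$, so $(Ns)^{-n}$ is false already at $s\asymp 1$; what Markov actually requires is the bound at scale $(Ns)^{-n/2}$ up to $N^{o(n)}$ factors.

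Beyond these, the mechanism you propose for extracting decay (an ultracontractive/Nash estimate for the configuration walk, measured against a Haar ``equilibrium'' $f^{\mathrm{eq}}$ said to be annihilated by the generator) is not substantiated: $f^{\mathrm{eq}}$ is not an exact stationary solution of the finite-$N$ flow \eqref{eq:discs2}, and no heat-kernel estimate of the required strength is provided or obviously true. The proof in the paper (Proposition \ref{lem:queedge}, following \cite{bourgade2018random}) needs no smallness of the initial data at all: one applies a maximum principle at the maximizing configuration $\bm{\xi_m}$, uses the local laws to produce a damping term of order $\Im m(z_{k_i})/\eta$ acting on $f_s(\bm{\xi_m})$, and, crucially, uses Lemma \ref{lem:holder} together with Young/H\"older inequalities to bound the remaining contributions by fractional powers $S_{s_0,s_1}^{(n-1)/n}$ and $S_{s_0,s_1}^{(n-2)/n}$ of the running supremum of the observable itself. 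This yields a self-consistent Gronwall inequality which is then iterated over a decreasing sequence of time windows, and the size $(Ns)^{-1/2}$ per particle emerges from balancing the damping against these self-consistent error terms; Markov with $n$ large and a union bound conclude. Your proposal is missing exactly this self-consistent closing of the inequality, which is the technical heart of the argument, and replaces it with steps that are either unavailable (the initial bound) or quantitatively incorrect (the $(Ns)^{-n}$ target).
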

Before beginning the proof of Proposition \ref{lem:queedge}, we need the following lemma relating our fluctuations $p_{k\ell}$ to the Bosonic observable.

\begin{lemma}[\cite{bourgade2018random}]\label{lem:holder}
Take an even integer $n$, there exists a $C>0$ depending on $n$ such that for any $i<j$ and any time $s$ we have
\begin{equation}
\mathds{E}\left[p_{ij}(s)^n\middle\vert\bm{\lambda}\right]\leqslant C\left(f_s^{\mathrm{Bos}}(\bm{\xi}^{(1)})+f_s^{\mathrm{Bos}}(\bm{\xi}^{(2)})+f_s^{\mathrm{Bos}}(\bm{\xi}^{(3)})\right)
\end{equation}
where $\bm{\xi}^{(1)}$ is the configuration of $n$ particles in the site $i$ and no particle elsewhere, $\bm{\xi^}{(2)}$ $n$ particles in the site $j$, and $\bm{\xi}^{(3)}$ an equal number of particles between the site $i$ and the site $j$. 
\end{lemma}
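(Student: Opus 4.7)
My plan is to exploit the Gaussian representation \eqref{eq:cook} of the Bosonic observable so that all three of $f^{\mathrm{Bos}}_s(\bm{\xi}^{(r)})$ can be identified with explicit mixed Gaussian moments of the linear forms $X_k := \scp{\mathbf{q}}{u_k^s}$. A direct computation of the covariance of $\mathbf{q}$, using $\sum_\alpha u_k(\alpha)u_\ell(\alpha)=\delta_{k\ell}$ and the fact that the $-\vert I\vert/N$ centering in $p_{kk}$ is exactly what is produced by the imaginary part of $\mathbf{q}$, yields $\mathds{E}_{\mathbf{q}}[X_k X_\ell] = p_{k\ell}(s)$ for all $k,\ell$. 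Consequently $\mathcal{M}(\bm{\xi}^{(1)})f^{\mathrm{Bos}}_s(\bm{\xi}^{(1)})=\EL{\mathds{E}_{\mathbf{q}}[X_i^{2n}]}$, and analogously for $\bm{\xi}^{(2)}$ with $j$ and for $\bm{\xi}^{(3)}$ with $\mathds{E}_{\mathbf{q}}[X_i^n X_j^n]$.

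Next I apply the standard (Bosonic) Wick theorem to the mixed moment, producing the algebraic identity
\[
\mathds{E}_{\mathbf{q}}[X_i^n X_j^n]=\sum_{a=0}^{n/2} c_a\,p_{ii}^a\,p_{jj}^a\,p_{ij}^{n-2a},
\]
where $c_a>0$ counts the pairings of the multiset $\{i,\dots,i,j,\dots,j\}$ (with $n$ copies of each) that contain $a$ pure-$i$ pairs, $a$ pure-$j$ pairs and $n-2a$ mixed pairs; in particular $c_0=n!$. Solving for $p_{ij}^n$, taking $\EL{\cdot}$, and using that $n$ is even so $p_{ij}^n\geqslant 0$ gives
\[
c_0\EL{p_{ij}^n}\leqslant \mathcal{M}(\bm{\xi}^{(3)})f^{\mathrm{Bos}}_s(\bm{\xi}^{(3)})+\sum_{a=1}^{n/2}c_a\EL{\vert p_{ii}\vert^a\vert p_{jj}\vert^a\vert p_{ij}\vert^{n-2a}}.
\]

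The closing step will be a termwise weighted Young's inequality. For each $1\leqslant a<n/2$, with exponents $(n/a,n/a,n/(n-2a))$ and a free small weight $\varepsilon_a>0$ placed on the $p_{ij}$ factor, I bound $\vert p_{ii}\vert^a\vert p_{jj}\vert^a\vert p_{ij}\vert^{n-2a}\leqslant \varepsilon_a p_{ij}^n+C_a(\varepsilon_a)(p_{ii}^n+p_{jj}^n)$; the boundary case $a=n/2$ is handled directly by $\vert p_{ii}\vert^{n/2}\vert p_{jj}\vert^{n/2}\leqslant \tfrac{1}{2}(p_{ii}^n+p_{jj}^n)$. Choosing the $\varepsilon_a$'s small enough that $\sum_a c_a\varepsilon_a\leqslant c_0/2$ absorbs the residual $p_{ij}^n$ contribution on the right into the left, leaving only positive multiples of $\EL{p_{ii}^n}$ and $\EL{p_{jj}^n}$. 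These in turn equal $(2n)!!/(2n-1)!!$ times $f^{\mathrm{Bos}}_s(\bm{\xi}^{(1)})$ and $f^{\mathrm{Bos}}_s(\bm{\xi}^{(2)})$ respectively, via the single-site Wick computation $\mathds{E}_{\mathbf{q}}[X_i^{2n}]=(2n-1)!!\,p_{ii}^n$. The only genuinely delicate point is the Young absorption, but this is immediate from the strict positivity $c_0=n!>0$ and the fact that only finitely many values of $a$ appear; the remaining accounting of combinatorial constants determines the final $C$.
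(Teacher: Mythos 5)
Your argument is correct, and it is essentially the canonical proof of this lemma (which the paper itself does not prove but imports from \cite{bourgade2018random}): expand the two-site Hafnian (equivalently, apply Wick's theorem to the Gaussian representation \eqref{eq:cook}, using that $\mathds{E}_{\mathbf{q}}[X_kX_\ell]=p_{k\ell}$), isolate the all-mixed-pairings term $c_0\,p_{ij}^n$ with $c_0=n!>0$, and absorb the remaining terms by weighted Young inequalities into $\EL{p_{ii}^n}$, $\EL{p_{jj}^n}$ and a small multiple of $\EL{p_{ij}^n}$. All the key points check out: the covariance identity, the parity argument ensuring $p_{ij}^{n-2a}\geqslant 0$, the exponents $(n/a,n/a,n/(n-2a))$ summing to one, and the finiteness of the absorption. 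The only slip is cosmetic: with the paper's normalization $\mathcal{M}(\bm{\xi}^{(1)})=\prod_{k\leqslant 2n,\,k\text{ odd}}k$ one gets exactly $f_s^{\mathrm{Bos}}(\bm{\xi}^{(1)})=\EL{p_{ii}^n}$, not the ratio $(2n)!!/(2n-1)!!$ you quote; this only affects the value of the (allowed, $n$-dependent) constant $C$.
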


Using this lemma we can now adapt the proof of \cite{bourgade2018random}*{Theorem 2.5} to the edge case. Note that the proof is actually simpler since we do not need to localize the dynamics in the bulk of the spectrum.
\begin{proof}[Proof of Proposition \ref{lem:queedge}]
Let $\xi>0$, we first condition on $H_0$ and $(\bm{\lambda}(s))_{s\in[0,1]}$ belonging to $\mathcal{A}_1(\xi).$ This allows to work deterministically on the eigenvalue path and our initial symmetric matrix. Let $\omega>0$ be such that $3\omega/2>\xi$ and $s_0,s_1\in [N^{-1/3+\omega},1]$ such that $s_0\leqslant s_1$.

 Consider $f^{\mathrm{Bos}}(\bm{\xi})$ the Bosonic obervable for the eigenvector moment flow. Consider $n$ fixed and look at a configuration $\bm{\xi_m}$ to be such that 
\[
f^{\mathrm{Bos}}_s(\bm{\xi_m})=\sup_{\bm{\xi},\,\mathcal{N}(\bm{\xi})=n} f^{\mathrm{Bos}}_s(\bm{\xi})\quad \text{and}\quad S_{s_0,s_1}=\sup_{s\in[s_0,s_1]}\sup_{\bm{\xi},\,\mathcal{N}(\bm{\xi})=n} f_s^{\mathrm{Bos}}(\bm{\xi}).
\]
Note that if there are several maximizers, we pick one aribtrarily under the constraint that $\bm{\xi_m}$ remains piecewise constant in $s$. Let $\eta=s_0^2N^{-\omega}\geqslant N^{-2/3+\omega}$. We then have, forgetting about the superscript Bos, for all $s\in[s_0,s_1]$,
\[
\partial_s f_s(\bm{\xi_m})=\sum_{k\neq \ell}2\eta_k(1+2\eta_\ell)\frac{f_s(\bm{\xi_m^{k,\ell}})-f_s(\bm{\xi_m})}{N(\lambda_k-\lambda_\ell)^2}
\leqslant
\frac{C}{N\eta}\sum_{i=1}^p
\sum_{\ell\neq k_i}
\frac{\eta(f_s(\bm{\xi_m^{k,\ell}})-f_s(\bm{\xi_m}))}{(\lambda_{k_i}-\lambda_\ell)^2+\eta^2}
\] 
where we denoted $(k_1,\dots,k_p)$ the sites $k$ such that $\eta_k\neq 0$. In particular, $p\leqslant n$ and $\sum_{i=1}^p\eta_{k_i}=n$. Now, we have that 
\[
f_s(\bm{\xi_m})\frac{1}{N}\sum_{i=1}^p\sum_{\ell \neq k_i}\frac{\eta}{(\lambda_{k_i}-\lambda_{\ell})^2+\eta^2}
=
\left(
	\sum_{i=1}^p
	\Im m(z_{k_i})
\right)
f_s(\bm{\xi_m})
+
\mathcal{O}\left(
	\frac{N^\xi}{N\eta}S_{s_0,s_1}
\right)
\]
where we denoted $z_{k_i}=\lambda_{k_i}+\I \eta$. For the other term, we use an implicit bound using H{ö}lder inequalities. First, we can remove some terms in the sum, 
\[
\Im \sum_{\ell\neq k_i}\frac{f_s(\bm{\xi_m^{k,\ell}})}{N(\lambda_\ell-z_{k_i})}=\Im \sum_{\ell\notin\{k_1,\dots,k_p\}}
\frac{f_s(\bm{\xi_m^{k,\ell}})}{N(\lambda_\ell-z_{k_i})}
+\mathcal{O}{}\left(\frac{N^\xi}{N\eta}S_{s_0,s_1}\right).
\]
Now, we can expand by the definition of $f_s(\bm{\xi})$ in terms of a sum over perfect matchings. Since we move one particle from $k$ to $\ell$, which is an empty site for the configuration $\bm{\xi_m}$, we only have two particles in the graph on the site $\ell$. Thus, there is two possibilities for the perfect matching, either there is an edge $\{(\ell,1),(\ell,2)\}$ or there is not. If there is such an edge, then we can write the contribution of such perfect matchings as
\[
\mathds{E}\left[
	Q_{n-1}(\bm{\xi_m})\Im \sum_{\ell\notin\{k_1,\dots,k_p\}}\frac{p_{\ell\ell}}{N(\lambda_\ell-z)}
	\middle\vert
	\bm{\lambda}
\right].
\]
Now, from the definition of $\mathcal{A}_1$ we have that 
\[
\Im \sum_{\ell\notin\{k_1,\dots,k_p\}}
\frac{p_{\ell\ell}}{N(\lambda_\ell-z)}
=
\Im \frac{1}{N}\sum_{\alpha\in I}G^s_{\alpha\alpha}(z) - \frac{\vert I\vert}{N}\Im m(z)
=
\mathcal{O}{}\left(
	\frac{N^\xi\vert I\vert}{N\sqrt{N\eta}}
\right).\]
See that $Q_{n-1}(\bm{\xi})$ is a sum of monomial of degree $n-1$ involving the fluctuations $p_{k\ell}$. Thus by a Young inequality, using Lemma \ref{lem:holder}, we have that 
\[
Q_{n-1}(\bm{\xi})
=
\mathcal{O}{}\left(S_{s_0,s_1}^{\frac{n-1}{n}}\right).
\]
Now for perfect matchings where $\{(\ell,1),(\ell,2)\}$ is not an edge, we can write the contribution in the following way,
\[
\mathds{E}\left[
	Q_{n-2}(q_1,q_2,\bm{\xi_m})\Im \sum_{\ell\notin\{k_1,\dots,k_p\}}\frac{p_{k_{q_1}\ell}p_{k_{q_2}\ell}}{N(\lambda_\ell-z)}
	\middle\vert
	\bm{\lambda}
\right].
\]
We can write, in order to control the sum
\[
\Im \sum_{\ell\notin\{k_1,\dots,k_p\}}
\frac{p_{k_{q_1}\ell}p_{k_{q_2}\ell}}{N(\lambda_\ell-z)}
=\mathcal{O}{}\left(
\frac{1}{N\eta}
\sum_{\ell=1}^N 
(p_{k_{q_1}\ell}^2+p_{k_{q_2}\ell}^2)
\right)=\mathcal{O}{}\left(
	\frac{N^\xi\vert I\vert}{N^2\eta}
\right)
\]
where we used the delocalization property from the definition of $\mathcal{A}_1$ and the fact that 
\[
\sum_{\ell=1}^N p_{k\ell}^2=\sum_{\alpha\in I}u_k(\alpha)^2=\mathcal{O}{}\left(
	N^\xi\frac{\vert I\vert}{N}
\right).
\]
In the same way, using a Young inequality with Lemma \ref{lem:holder}, we control the polynomial of degree $n-2$ in terms of $p_{k\ell}$,
\[
Q_{n-2}(q_1,q_2,\bm{\xi_m})=\mathcal{O}\left(
	S_{s_0,s_1}^{\frac{n-2}{n}}
\right).
\]
Thus, combining all these inequalities, we obtain the following Gronwall-type inequality,
\[
\partial_sf_s(\bm{\xi_m})
\leqslant 
-\frac{C}{\eta}\left(
\sum_{i=1}^p\Im m(z_{k_i})
\right)
f_s(\bm{\xi_m})
+
\mathcal{O}{}\left(
	\frac{N^\xi}{\eta}
	\left(
		\frac{1}{N\eta}S_{s_0,s_1}
		+
		\frac{\vert I\vert}{N\sqrt{N\eta}}S_{s_0,s_1}^{\frac{n-1}{n}}
		+
		\frac{\vert I\vert}{N^2\eta}S_{s_0,s_1}^{\frac{n-2}{n}}
	\right)
\right).
\]
Now, using the fact that since $\eta\geqslant N^{-2/3+\omega}$, we have $\Im m(E+\I \eta)\geqslant \sqrt{\eta}$, we have the bound
\[
\partial_s f_s(\bm{\xi_m})
\leqslant 
-\frac{C}{\sqrt{\eta}}f_s(\bm{\xi_m})
+
\mathcal{O}\left(
	\frac{N^\xi}{\sqrt{\eta}}
	\left(
		\frac{1}{N\eta^{3/2}}S_{s_0,s_1}
		+
		\frac{\vert I\vert}{N^{3/2}\eta}S_{s_0,s_1}^{\frac{n-1}{n}}
		+
		\frac{\vert I\vert}{N^2\eta^{3/2}}S_{s_0,s_1}^{\frac{n-2}{n}}
	\right)
\right).
\]
By Gronwall's lemma, since $s\geqslant s_0 \gg \sqrt{\eta}$,
\[
f_s(\bm{\xi_m})
=
\mathcal{O}{}\left(
	\frac{N^{3\omega/2+\xi}}{Ns_0^3}S_{s_0,s_1}+\frac{\vert I\vert N^{\xi+\omega}}{N^{3/2}s_0^2}S_{s_0,s_1}^{\frac{n-1}{n}}
	+
	\frac{\vert I\vert N^{\xi+3\omega/2}}{N^2s_0^3}S_{s_0,s_1}^{\frac{n-2}{n}}
	+
	N^{-D}
\right)
\]
for any $D>0$ and for all $s_0\leqslant s \leqslant \frac{s_0+s_1}{2}$ for instance. Thus we obtain that 
\begin{equation}\label{eq:induct}
S_{s_0,s_2}\leqslant C\frac{N^{3\omega/2+\xi}}{Ns_0^3}S_{s_0,s_1}+C\frac{\vert I\vert N^{\xi+\omega}}{N^{3/2}s_0^2}S_{s_0,s_1}^{\frac{n-1}{n}}
+
C\frac{\vert I\vert N^{\xi+3\omega/2}}{N^2s_0^3}S_{s_0,s_1}^{\frac{n-2}{n}}
+
CN^{-D}
\end{equation}
with $s_2=\frac{s_0+s_1}{2}\in [s_0,s_1]$. Since $s_0\geqslant N^{-1/3+\omega}$ we obtain that 
\[
 C\frac{N^{3\omega/2+\xi}}{Ns_0^3}S_{s_0,s_1}
 \leqslant CN^{-3\omega/2+\xi}S_{s_0,s_1}=CN^{-\sigma}S_{s_0,s_1}\quad\text{with}\quad \sigma\coloneqq \frac{3\omega}{2}-\xi>0.
\]
We can induct the bound \eqref{eq:induct} by defining a sequence of time $(s_i)$ such that $s_i = \frac{s_0+s_{i-1}}{2}$ then we have that as long as 
\[
S_{s_0,s_i}^{1/n}
\geqslant 
N^{\sigma}\frac{\vert I\vert N^{\xi+\omega}}{N^{3/2}s_0^2}
+
N^{\sigma/2}\sqrt{\frac{\vert I\vert N^{\xi+3\omega/2}}{N^2s_0^3}}
\]
the following bound holds
\[
S_{s_0,s_{i+1}}
\leqslant N^{-i\sigma}S_{s_0,s_1}.
\]
If this bound holds indefinitely then our result is proved. Otherwise, it means that for some $i_0$ we have that
\[
S_{s_0,s_i}\leqslant
\left(
	N^{\sigma}\frac{\vert I\vert N^{\xi+\omega}}{N^{3/2}s_0^2}
	+
	N^{\sigma/2}\sqrt{\frac{\vert I\vert N^{\xi+3\omega/2}}{N^2s_0^3}}
\right)^n	
\]
We can finish by a Markov inequality, since $\omega$ and $\xi$ (and thus $\sigma$) can be taken arbitrarily small in the proof, for any $\varepsilon, D>0$ we can consider $n=n(\varepsilon,D)$ sufficiently large such that 
\[
\mathds{P}\left(
\vert p_{kk}(s_0)\vert
+
\vert p_{k\ell}(s_0) \vert
\geqslant
N^\varepsilon\Psi_1(s_0)
\right)\leqslant N^{n\varepsilon/2}\Psi_1(s_0)^n(N^\varepsilon \Psi_1(s_0))^{-n}+N^{-D}\leqslant CN^{-D}
\]
where we used Corollary \ref{lem:holder}, and Lemma \ref{lem:goodset}.
\end{proof}
\subsection{Decorrelation of eigenvectors and the resolvent}
An estimate we need in Section \ref{sec:relaxation} is a correlation between eigenvector entries and the resolvent. Of course, the resolvent defined in \eqref{eq:resolvent} clearly depends on all eigenvectors of the random matrix. However, it can be seen as an average over eigenvectors and do not depend significantly on a single eigenvector. We prove such a decorrelation estimate dynamically and obtain the result after some time $s$.
\begin{proposition}\label{prop:decorr}
Let $\xi,\,\omega,\delta^\prime$ be small positive constants and $j,\,\alpha,\,\beta\in\unn{1}{N},$ with $\alpha\neq \beta$. For a relaxation time $s\in[N^{-2/3+\delta^\prime}, N^{-\delta^\prime}]$ and for $\delta_1>0$ small enough such that $N^{-\delta_1}s\gg N^{-2/3}$, for any $z=E+\I\eta\in\mathcal{D}_\omega$ and any $D>0$, we have
\[
\begin{gathered}
\mathds{P}\left(
	\left\vert
		\EL{u_j^s(\alpha)u_j^s(\beta)\Im G_{\alpha\beta}^s(z)}
	\right\vert
	\geqslant {N^{5\xi+\delta_1}\Psi_2(s,\eta)}
\right)\leqslant N^{-D}\\
\text{with}\quad
\Psi_2(s,\eta)
=
\frac{1}{N^2\eta}
+
\frac{1}{N^2s^{3/4}\eta^{1/2}}
+
\frac{\sqrt{s}}{N^2\eta^2}
\end{gathered}
\] 
uniformly in $j,\,\alpha,\,\beta$ and $z$ where the expectation is taken over $(B_{k\ell}(t))_{t\in[0,1]}$ and the probability is taken over the initial matrix $H_0$ and  $(B_{k\ell}(t))_{t\in[0,1]}$.
\end{proposition}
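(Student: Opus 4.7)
The plan is to apply the method of characteristics to the stochastic evolution of $G^s(z)$ in order to express it in terms of $G^0$ (a function of $H_0$ alone), and then exploit the independence between $H_0$ and the Brownian motion driving the Dyson vector flow \eqref{eq:dysonvect}. Itô's formula applied to $G^s(z)=(H_s-z)^{-1}$ along \eqref{eq:dyson} produces a matrix-valued SDE of the schematic form
\[
\D G^s(z)= -\frac{1}{\sqrt{N}}\,G^s(z)\,\D B_s\,G^s(z) + \left(\tfrac{1}{2} G^s(z) + \left(\tfrac{z}{2} + m_s(z)\right)\partial_z G^s(z)\right)\D s,
\]
after discarding the lower-order Itô correction. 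On the good event $\mathcal{A}_1(\xi)$ of Lemma \ref{lem:goodset}, the averaged local law \eqref{eq:avelocal} lets us replace $m_s(z)$ by $m(z)$ with error $\O{N^\xi/(N\eta)}$. The associated characteristic ODE $\dot z_t=-(\tfrac{z_t}{2}+m(z_t))$ has imaginary part decreasing in forward time, so flowing backward from an endpoint $z_s=z\in\mathcal{D}_\omega$ produces a starting point $z_0^{(s)}$ with $\Im z_0^{(s)}$ of order $\eta+s$, still inside $\mathcal{D}_\omega$. Integrating the SDE along this characteristic yields
\[
e^{-s/2}G^s_{\alpha\beta}(z) = G^0_{\alpha\beta}\bigl(z_0^{(s)}\bigr) + M^{(\alpha\beta)}_s + R^{(\alpha\beta)}_s,
\]
where $M_s$ is a stochastic integral against $B$ and $R_s$ collects the deterministic drift errors coming from the substitution $m_s\mapsto m$.

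Each of these three pieces contributes one of the terms of $\Psi_2$. The martingale $M_s$ is bounded via Itô's isometry and the isotropic local law \eqref{eq:isolocal} applied along the characteristic; after multiplying by the delocalization bound $|u^s_j(\alpha)u^s_j(\beta)|\leqslant N^\xi/N$ from Corollary \ref{cor:deloc}, its contribution matches $\sqrt{s}/(N^2\eta^2)$, while $R_s$ similarly produces $1/(N^2\eta)$. For the leading piece $G^0_{\alpha\beta}(z_0^{(s)})$, conditioning on $H_0$ makes it deterministic and of size $\O{N^\xi/\sqrt{N(\eta+s)}}$ by the isotropic local law. It then remains to bound $\ELH{u^s_j(\alpha)u^s_j(\beta)}$: in the equivalent representation \eqref{eq:dysonvect} of the eigenvector dynamics by a Brownian motion independent of the one generating $H_s$, this conditional moment can be analyzed by adapting the Gronwall bootstrap behind Proposition \ref{lem:queedge} to the degenerate index set $I=\{\alpha,\beta\}$; the resulting decay in $s$, combined with the isotropic bound on $G^0$, yields the middle term of $\Psi_2$.

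The main obstacle is this last step. While the marginal relaxation of eigenvector moments conditional on $\bm{\lambda}$ is well understood through the Bosonic and Fermionic observables of Section 1, conditioning additionally on $H_0$ requires reworking the Gronwall-type argument of Proposition \ref{lem:queedge} so that the resulting estimates hold uniformly in $H_0$ and for a single pair $(\alpha,\beta)$ rather than for a macroscopic set $I$. Once this is in place, a union bound over a polynomial grid of spectral parameters $z\in\mathcal{D}_\omega$ together with Lemma \ref{lem:goodset} upgrades the pointwise decorrelation into the uniform statement of the proposition.
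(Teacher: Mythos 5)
The overall architecture of your proposal matches the paper's approach: derive an SDE for the resolvent, solve the advection part along characteristics, decompose $\widetilde{G}^s_{\alpha\beta}(z)$ into $\widetilde{G}^0_{\alpha\beta}(z_s)$ plus a stochastic integral plus a drift error, and exploit the conditional independence of $G^0$ from the eigenvector noise to reduce the leading term to $\ELH{u_j^s(\alpha)u_j^s(\beta)}$, which you then control by a Gronwall-type argument. That is correct in spirit.

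However, there is a genuine gap in your treatment of the stochastic integral. Once the SDE for $\widetilde{G}^s$ is derived from the spectral representation, its martingale part reads
\[
-\frac{1}{\sqrt{N}}\sum_{k,\ell=1}^{N}\int_0^s \frac{u_k^\tau(\alpha)u_\ell^\tau(\beta)}{(\lambda_k(\tau)-z_{s-\tau})(\lambda_\ell(\tau)-z_{s-\tau})}\,\mathrm{d}B_{k\ell}(\tau),
\]
that is, a sum over all pairs $(k,\ell)$ including $k\neq\ell$. If you bound this full martingale via delocalization and $\vert\lambda_k-z\vert\geqslant\eta$, the off-diagonal part contributes an extra factor of $N$ in the quadratic variation compared with the diagonal $k=\ell$ terms; if you use the isotropic local law instead you get $\sqrt{\langle M\rangle_s}\sim\sqrt{s}\,\Im m(z)/(\sqrt{N}\,\eta)$, and after multiplying by the delocalization bound $N^{2\xi}/N$ on the eigenvector factor the resulting bound exceeds the target $\sqrt{s}/(N^2\eta^2)$ by a factor of order $\sqrt{N}\,\eta\,\Im m(z)$, which is $\gg 1$ in the regime $\eta\gg N^{-1/2}$ where the proposition is applied. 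So the blanket ``Itô isometry $+$ isotropic local law'' bound you propose does not close. The crucial ingredient you are missing is that one conditions on $\bm{\lambda}$, which is generated by the \emph{diagonal} driving Brownians; those are independent of the off-diagonal Brownians that drive the eigenvector flow \eqref{eq:dysonvect}, and by symmetry the conditional expectation $\EL{u_j^s(\alpha)u_j^s(\beta)\int(\cdots)\,\mathrm{d}B_{k\ell}}$ vanishes identically for every off-diagonal pair $k\neq\ell$. Only the diagonal stochastic integral $M_s=\int_0^s\sum_k u_k^\tau(\alpha)u_k^\tau(\beta)(\lambda_k(\tau)-z_{s-\tau})^{-2}\,\mathrm{d}B_{kk}(\tau)$ survives, and that is what the quadratic-variation bound $\langle M\rangle_s\leqslant N^{4\xi}s/(N\eta^4)$ is estimating. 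Without this cancellation the third term of $\Psi_2$ cannot be obtained.

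Two further remarks. First, for the cancellation to be available you must work with the spectral SDE for $\sum_k u_k^s(\alpha)u_k^s(\beta)/(\lambda_k(s)-z)$ driven by the eigenvalue/eigenvector Brownian $\widetilde B$, not with Itô applied to $(H_s-z)^{-1}$ along \eqref{eq:dyson}, which is driven by the independent noise $B$; only in the $(\bm\lambda,\mathbf{u})$ parametrization can you identify the diagonal noise with the eigenvalue filtration. Second, for the remaining factor $\ELH{u_j^s(\alpha)u_j^s(\beta)}$ the paper does not adapt the bootstrap of Proposition \ref{lem:queedge}: it observes that $\tilde f_s(j)=\ELH{u_j^s(\alpha)u_j^s(\beta)}$ satisfies the one-particle eigenvector moment flow directly (no Bosonic observable structure is needed because there is no centering to remove), and a single maximum principle plus the entrywise local law gives the decay $\tilde f_s(k_m)=\mathcal{O}(N^{3\xi+3\delta_1/4}/(N^{3/2}s^{3/4}))$. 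Your route through Proposition \ref{lem:queedge} with $I=\{\alpha,\beta\}$ is more complicated and would require a polarization argument, since neither $p_{kk}$ nor $p_{k\ell}$ with that $I$ equals $u_k(\alpha)u_k(\beta)$; it is safer to cite the one-particle flow directly.
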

\begin{remark}
Note that this proposition gives us a better bound than the trivial bound one can do by simply using the delocalization \eqref{eq:deloc} and the local law \eqref{eq:isolocal} if we consider a relaxation time $s$ close to order 1 such as $s=N^{-\theta}$ for a small $\theta$. Indeed, we would obtain a bound of the form
\[
\EL{u_j^s(\alpha)u_j^s(\beta)\Im G_{\alpha\beta}^s(z)}
=
\O{\frac{N^{\xi^\prime}}{N^{3/2}s^{1/2}}}
\] 
and lose an order of $\sqrt{N}$.
\end{remark}
We prove this proposition by considering the dynamics of the resolvent $G^s$ and use the characteristics method to allow decorrelations with the eigenvectors. The characteristics method express the resolvent at time $s$ by the the initial resolvent $G^0$ which permits decorrelation via a correct conditioning on the initial condition $H_0$. A similar dynamics was used in \cite{bourgade2018extreme} to study extreme gaps between eigenvalues of generalized Wigner matrices. The observable \cite{bourgade2018extreme}*{(1.11)} looks similar to a resolvent but the dependence in eigenvectors in the numerator in \eqref{eq:resolvent} is instead replaced by an eigenvalue coupling observable. Interestingly, the dynamics of this eigenvalue coupling observable is similar to the one followed by eigenvector entries, the main difference coming from the martingale term in the stochastic differential equation which involves here off-diagonal entries of the Dyson Brownian motion.  The dynamics of the resolvent $G^s$ is given in the following lemma.
\begin{lemma}
Consider $(\bm{\lambda}(s), \mathbf{u}^s)$ the solution to \eqref{eq:dysonval} and \eqref{eq:dysonvect}, for any $z$ such that $\Im z\neq 0$, if one defines
\[
\widetilde{G}^s=\mathrm{e}^{-s/2}G^s,
\]
we have for any $\alpha,\,\beta\in\unn{1}{N}$,
\begin{equation}\label{eq:dynareso}
\D \widetilde{G}_{\alpha\beta}^s(z)
=
\left(
	s(z)+\frac{z}{2}
\right)
 \partial_z \widetilde{G}^s_{\alpha\beta}(z)\D s
-
\frac{1}{\sqrt{N}}
\sum_{k,\ell=1}^N
\frac{u_k(\alpha)u_\ell(\beta)\D B_{k\ell}}{(\lambda_k-z)(\lambda_\ell-z)}.
\end{equation}
\end{lemma}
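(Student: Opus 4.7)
The plan is to apply Itô's formula to the matrix-valued process $G^s = (H_s - z)^{-1}$ along the Ornstein–Uhlenbeck dynamics \eqref{eq:dyson}, and then re-express the resulting martingale in the eigenbasis of $H_s$.

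The starting point is the resolvent expansion
$(H+dH-z)^{-1} = G - G(dH)G + G(dH)G(dH)G + O(dH^3)$,
into which I would substitute $dH_s = \frac{1}{\sqrt N}dB_s - \frac{1}{2}H_s\,ds$. The drift coming from $-\frac{1}{2}H_s\,ds$ is $+\frac{1}{2}(GHG)_{\alpha\beta}\,ds$, and the algebraic identity $GHG = G(H-z)G + zG^2 = G + zG^2$ rewrites it as $\frac{1}{2}G_{\alpha\beta}\,ds + \frac{z}{2}(G^2)_{\alpha\beta}\,ds$. The Itô correction comes from the quadratic variation of the second-order term: using the symmetric Brownian covariance $\mathds{E}[(dB)_{ij}(dB)_{kl}] = (\delta_{ik}\delta_{jl} + \delta_{il}\delta_{jk})\,ds$, the contraction $\delta_{il}\delta_{jk}$ contributes the trace-type term $s(z)(G^2)_{\alpha\beta}\,ds$, which is the main one. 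Combining these with the identity $\partial_z G = G^2$, the drift of $G^s$ reads $\frac{1}{2}G_{\alpha\beta} + \bigl(s(z) + \tfrac{z}{2}\bigr)\partial_z G_{\alpha\beta}$.

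Passing to $\widetilde G = e^{-s/2}G$ produces an additional $-\frac{1}{2}\widetilde G\,ds$ that cancels the $e^{-s/2}\cdot \frac{1}{2}G_{\alpha\beta}$ piece, leaving precisely $\bigl(s(z)+\tfrac{z}{2}\bigr)\partial_z \widetilde G_{\alpha\beta}\,ds$ as required. For the martingale part, the spectral decomposition $G_{\alpha i} = \sum_k u_k(\alpha)u_k(i)/(\lambda_k - z)$ yields
\[
(G\,dB\,G)_{\alpha\beta} = \sum_{k,\ell=1}^N \frac{u_k(\alpha)u_\ell(\beta)}{(\lambda_k-z)(\lambda_\ell-z)} \sum_{i,j} u_k(i)u_\ell(j)\,dB_{ij},
\]
and the inner sum is the rotated Brownian increment $dB_{k\ell}$ used in \eqref{eq:dynareso}. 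A direct covariance computation, using the orthogonality of $(u_k)$, shows that conditionally on the eigenvectors this family has the same symmetric covariance structure $\mathds{E}[dB_{k\ell}dB_{k'\ell'}] = (\delta_{kk'}\delta_{\ell\ell'} + \delta_{k\ell'}\delta_{\ell k'})\,ds$ as the original matrix-entry Brownians, so the notation is consistent.

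The main thing to be careful about is the bookkeeping of the symmetric-matrix Itô formula, since the diagonal entries $B_{ii}$ carry doubled variance. The cleanest way to avoid factor-of-2 errors is to treat $(dB)_{ij}$ formally as an unconstrained Gaussian family with the unified covariance $(\delta_{ik}\delta_{jl}+\delta_{il}\delta_{jk})\,ds$, which automatically produces the correct $2/N$ variance on the diagonal and $1/N$ off-diagonal, without having to separate the $i<j$ and $i=j$ cases by hand.
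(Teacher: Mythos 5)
Your route is genuinely different from the paper's. The paper never touches the matrix SDE \eqref{eq:dyson} directly: it applies It\^o's formula to $u_k(\alpha)u_k(\beta)$ and then to $\sum_k u_k(\alpha)u_k(\beta)/(\lambda_k-z)$ using the coupled flows \eqref{eq:dysonval}--\eqref{eq:dysonvect}, and the advection coefficient $(s(z)+\frac z2)\partial_z$ emerges only after combining the eigenvector-relaxation drift with the eigenvalue-repulsion drift through a partial-fraction identity. Your matrix-level computation (resolvent expansion of $(H+\D H-z)^{-1}$, the identity $GHG=G+zG^2$, contraction of the symmetric Brownian covariance) reaches the same advection term in one step, makes the cancellation of $\frac12 G_{\alpha\beta}\,\D s$ by the $\mathrm{e}^{-s/2}$ rescaling transparent, and avoids the partial-fraction bookkeeping entirely. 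The identification of the martingale as $-N^{-1/2}(G\,\D B\,G)_{\alpha\beta}$ rewritten in the eigenbasis, with the rotated increments retaining the symmetric covariance, is the standard argument and is consistent with how \eqref{eq:dysonval}--\eqref{eq:dysonvect} are derived with an independent copy $\widetilde B$.

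The one point you must still address is the first contraction $\delta_{ik}\delta_{jl}$ in the It\^o correction, which you set aside as not being ``the main one'': it contributes exactly $\frac1N(G^3)_{\alpha\beta}\,\D s=\frac{1}{2N}\partial_z^2G_{\alpha\beta}\,\D s$, and since the lemma asserts an exact identity with no error term, this contribution cannot simply be discarded --- it does not cancel against anything else in your computation. The analogous term in the paper's eigenbasis derivation is the second-order It\^o correction coming from $\D\langle\lambda_k\rangle$, namely $\sum_k u_k(\alpha)u_k(\beta)/(N(\lambda_k-z)^3)$, and there it is precisely what completes the off-diagonal sum $\sum_{k\neq\ell}$ to the full double sum $\sum_{k,\ell}$ producing $s(z)\partial_zG_{\alpha\beta}$; in your derivation the full $s(z)\partial_zG_{\alpha\beta}$ already arises from the other contraction, so the $(G^3)/N$ piece is genuinely left over. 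It is of lower order and harmless in the subsequent application (Proposition \ref{prop:decorr}), but you should either carry it explicitly in \eqref{eq:dynareso} or justify its absence.
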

Note that while we state it for the dynamics with a generalized Wigner matrix as an initial condition. This stays true for any arbitrary initial condition.
\begin{proof}
By using Itô's formula, we have
\begin{equation*}
\D u_k(\alpha)u_k(\beta)=
\sum_{\ell\neq k}
\frac{u_\ell(\alpha)u_\ell(\beta)-u_k(\alpha)u_k(\beta)}{N(\lambda_k-\lambda_\ell)^2}\D s
+
\frac{1}{\sqrt{N}}
\sum_{\ell\neq k}
\frac{\D B_{k\ell}}{\lambda_k-\lambda_\ell}(u_k(\alpha)u_\ell(\beta)+u_k(\beta)u_\ell(\alpha)).
\end{equation*}
And then we have 
\begin{align*}
\D \sum_{k=1}^N\frac{u_k(\alpha)u_k(\beta)}{\lambda_k-z}
=&
\frac{1}{\sqrt{N}}\sum_{k\neq\ell}\frac{u_k(\alpha)u_\ell(\beta)+u_k(\beta) u_\ell(\alpha)}{(\lambda_k-\lambda_\ell)(\lambda_k-z)}\D B_{k\ell}
-
\frac{1}{\sqrt{N}}\sum_{k=1}^N\frac{\D B_{kk}}{(\lambda_k-z)^2}u_k(\alpha)u_k(\beta)
\\
&+
\sum_{k\neq \ell}
\frac{u_\ell(\alpha)u_\ell(\beta)-u_k(\alpha)u_k(\beta)}{N(\lambda_k-\lambda_\ell)^2(\lambda_k-z)}\D s
-\sum_{k\neq \ell}
\frac{u_k(\alpha)u_k(\beta)}{N(\lambda_k-\lambda_\ell)(\lambda_k-z)^2}\D s
\\&
+
\sum_{k=1}^N\frac{\lambda_ku_k(\alpha)u_k(\beta)}{2(\lambda_k-z)^2}\D s
+
\sum_{k=1}^N\frac{u_k(\alpha)u_k(\beta)}{N(\lambda_k-z)^3}\D s.
\end{align*}
We can use the identity
\[
\frac{u_k(\alpha)u_k(\beta)}{(\lambda_k-\lambda_\ell)}
\left(
	\frac{1}{(\lambda_k-\lambda_\ell)(\lambda_\ell-z)}
	-
	\frac{1}{(\lambda_k-\lambda_\ell)(\lambda_k-z)}
	-
	\frac{1}{(\lambda_k-z)^2}
\right)
=
\frac{u_k(\alpha)u_k(\beta)}{(\lambda_k-z)^2(\lambda_\ell-z)}
\]
to obtain
\begin{align}
\D \sum_{k=1}^N
\frac{u_k(\alpha)u_k(\beta)}{\lambda_k-z}
=
\sum_{k,\ell=1}^N\frac{u_k(\alpha)u_k(\beta)}{N(\lambda_k-z)^2(\lambda_\ell-z)}\D s
-
\frac{1}{2\sqrt{N}}
\sum_{k,\ell=1}^N
\frac{u_k(\alpha)u_\ell(\beta)+u_k(\beta)u_\ell(\alpha)}{(\lambda_k-z)(\lambda_\ell-z)}
\D B_{k\ell}.
\end{align}
By th spectral decomposition of the resolvent from \eqref{eq:resolvent} and the definition of $\widetilde{G}^s$, we obtain \eqref{eq:dynareso}.
\end{proof}
Equation \eqref{eq:dynareso} can be seen as a stochastic advection equation. If one removes the stochastic martingale term and replace $s(z)$ by its deterministic equivalent $m(z)$, it is possible to solve this equation using the characheristics method.
\begin{lemma}
Let the characteristic $z_s$ be defined as
\[
z_s=\frac{1}{2}\left(
	\mathrm{e}^{s/2}(z+\sqrt{z^2-4})
	+
	\mathrm{e}^{-s/2}(z-\sqrt{z^2-4})
\right)
\]
we then have for any function $h_0$ smooth enough
\[
\partial_s h_s(z)
=
\left(
	m(z)+\frac{z}{2}
\right)
\partial_z h_s(z)
\quad\text{with}\quad
h_s(z) := h_0(z_s)
\]
where $m(z)$ is given in \eqref{eq:resolvent}.
\end{lemma}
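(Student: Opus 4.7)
The plan is to reduce the claim to a direct algebraic verification on the explicit formula for $z_s$. Since $h_s(z) = h_0(z_s)$, the chain rule gives
\[
\partial_s h_s(z) = h_0'(z_s)\,\partial_s z_s \quad\text{and}\quad \partial_z h_s(z) = h_0'(z_s)\,\partial_z z_s,
\]
so the asserted transport equation is equivalent to the pointwise scalar identity
\[
\partial_s z_s = \left(m(z) + \tfrac{z}{2}\right)\partial_z z_s.
\]
This is the statement I would verify by direct computation.

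First, I would simplify the coefficient: from the definition \eqref{eq:resolvent} one has $m(z) = (-z + \sqrt{z^2-4})/2$, so $m(z) + z/2 = \tfrac{1}{2}\sqrt{z^2-4}$. Next, I would introduce the shorthand $a = z + \sqrt{z^2-4}$, $b = z - \sqrt{z^2-4}$ (note $ab = 4$) and rewrite the characteristic as $z_s = \tfrac{1}{2}(e^{s/2} a + e^{-s/2} b)$. Differentiating entry by entry yields
\[
\partial_s z_s = \tfrac{1}{4}\bigl(e^{s/2} a - e^{-s/2} b\bigr),
\qquad
\partial_z z_s = \tfrac{1}{2}\left(e^{s/2}\Bigl(1 + \tfrac{z}{\sqrt{z^2-4}}\Bigr) + e^{-s/2}\Bigl(1 - \tfrac{z}{\sqrt{z^2-4}}\Bigr)\right).
\]
Multiplying $\partial_z z_s$ by $\sqrt{z^2-4}/2$ and using the elementary relations $\sqrt{z^2-4} + z = a$ and $\sqrt{z^2-4} - z = -b$, the right-hand side collapses exactly onto $\tfrac{1}{4}(e^{s/2} a - e^{-s/2} b)$, which matches $\partial_s z_s$. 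This closes the identity and, via the chain rule, proves the lemma.

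The whole argument is algebraic and I do not anticipate any substantial obstacle. The only point requiring a brief remark is the branch choice for $\sqrt{z^2-4}$: one uses the branch specified after \eqref{eq:resolvent}, which makes $m$ holomorphic on $\mathbb{C}\setminus[-2,2]$ and vanish at infinity. With that choice fixed, $z_s$ is holomorphic in $z$ on the same domain and smooth in $s$, so the chain rule applies unambiguously for any sufficiently smooth $h_0$, completing the proof.
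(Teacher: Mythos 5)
Your computation is correct, and in fact the paper states this lemma without proof, so you are supplying an argument the authors omitted. The reduction via the chain rule to the scalar identity $\partial_s z_s = \bigl(m(z)+\tfrac{z}{2}\bigr)\partial_z z_s$ is the right move, the simplification $m(z)+z/2=\tfrac{1}{2}\sqrt{z^2-4}$ is correct, and the algebra with $a=z+\sqrt{z^2-4}$, $b=z-\sqrt{z^2-4}$ (so $ab=4$, $\sqrt{z^2-4}+z=a$, $\sqrt{z^2-4}-z=-b$) closes cleanly. Your remark on the branch of $\sqrt{z^2-4}$ (the one making $m$ holomorphic off $[-2,2]$ with $m(z)\to 0$ at infinity, as fixed after the paper's definition of $m$) is exactly the point one needs to flag for the chain rule to apply without ambiguity. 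No gaps.
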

We are now ready to prove Proposition \ref{prop:decorr} using this advection equation and the previous representation of its solution.
\begin{proof}[Proof of Proposition \ref{prop:decorr}]
Firstly, see that we condition on $H_0$ and a path of eigenvalues $\bm{\lambda}$ in $\mathcal{A}_1(\xi)$ which holds with overwhelming probability. Now, we have
\[
\widetilde{G}^s_{\alpha\beta}(z)-\widetilde{G}^0_{\alpha\beta}(z_s)
=
-\int_0^s \D \widetilde{G}^{s-\tau}_{\alpha\beta}(z_\tau)
=
\frac{1}{\sqrt{N}}\sum_{k,\ell=1}^N\int_0^s
\frac{u_{k}^\tau(\alpha)u_{\ell}^\tau(\beta)\D B_{k\ell}(\tau)}{(\lambda_k(\tau)-z_{s-\tau})(\lambda_\ell(\tau)-z_{s-\tau})}
+\O{\frac{N^\xi}{N\eta}}.
\]
We have the easy bound, since $s\ll 1$,
\begin{equation*}
c
\EL{u_{j}^s(\alpha)u_{j}^s(\beta)\Im \widetilde{G}^s_{\alpha\beta}(z)}
\leqslant
C\mathds{E}\left[
	u_{j}^s(\alpha)u_{j}^s(\beta)
	\Im G_{\alpha\beta}^s(z)
	\middle\vert
	\bm{\lambda}
\right]
\leqslant C
\EL{u_{j}^s(\alpha)u_{j}^s(\beta)\Im \widetilde{G}^s_{\alpha\beta}(z)}
\end{equation*}
which gives 
\begin{multline}\label{eq:decomp}
\mathds{E}\left[
	u_{j}^s(\alpha)u_{j}^s(\beta)
	\Im G_{\alpha\beta}^s(z)
	\middle\vert
	\bm{\lambda}
\right]
\leqslant
\EL{u_{j}^s(\alpha)u_{j}^s(\beta)\Im \widetilde{G}^0_{\alpha\beta}(z_s)}\\
+
\frac{1}{\sqrt{N}}
\EL{\Im
\sum_{k,\ell=1}^N
\int_0^s
\frac{u_{j}^s(\alpha)u_{j}^s(\beta)u_{k}^\tau(\alpha)u_{\ell}^\tau(\beta)}{(\lambda_k(\tau)-z_{s-\tau})(\lambda_\ell(\tau)-z_{s-\tau})}\D B_{k\ell}(\tau)
}
+
\O{\frac{N^{2\omega}}{N^2\eta}}.
\end{multline}
For the first term, we use the fact that $G^0(z_s)$ only depends on the initial matrix for the dynamics and we can write
\[
\EL{u_{j}^s(\alpha)u_{j}^s(\beta)\Im \widetilde{G}^0_{\alpha\beta}(z_s)}
=
\EL{\Im \widetilde{G}^0_{\alpha\beta}(z_s)\ELH{u_{j}^s(\alpha)u_{j}^s(\beta)}}
\]
But we have the dynamics for $\tilde{f}_s(j)=\ELH{u_{j}^s(\alpha)u_{j}^s(\beta)}$,
\[
\partial_s \tilde{f}_s(j)
=
\sum_{k\neq j}
\frac{\tilde{f}_s(k)-\tilde{f}_s(j)}{N(\lambda_k-\lambda_j)^2}.
\]
This a clear consequence of Itô's formula and similar to the eigenvector moment flow from \cite{bourgade2017eigenvector}. So that by a maximum principle we obtain the following: define the index ${k_m}$ as
\[
\tilde{f}_s({k_m})
=
\sup_{k\in\unn{1}{N}}\tilde{f}_s({k})
\]
where we choose an arbitrary maximizer if there are several with the condition that $k_m$ is a piecewise constant function of $s$.
then we can write for any $\eta^\prime>0$,
\begin{align*}
\partial_s \tilde{f}_s({k_m})
&\leqslant 
-\frac{1}{\eta^\prime}
\left(
	\tilde{f}_s({k_m})
	\frac{1}{N}\sum_{k\neq {k_m}}
	\frac{\eta^\prime}{(\lambda_k-\lambda_{k_m})^2+{\eta^\prime}^2}
	-\frac{1}{N}\sum_{k\neq {k_m}}
	\frac{\tilde{f}_s(k)\eta^\prime}{(\lambda_k-\lambda_{k_m})^2+{\eta^\prime}^2}
\right)
\\&
\leqslant
-\frac{1}{\eta^\prime}
\left(
	\Im m(\lambda_{{k_m}}+\I\eta^\prime)
	\tilde{f}_s({k_m})
	-
	\Im \sum_{k\neq {k_m}}
	\frac{\tilde{f}_s(k)}{N(\lambda_k-(\lambda_{{k_m}}+\I\eta^\prime))}
	+
	\O{\frac{N^{3\xi}}{N^2\eta^\prime}}
\right)
\end{align*}
where we used the definition of $\mathcal{A}_1(\xi)$ from Lemma \ref{lem:goodset} which gives $\tilde{f}_s\leqslant N^{-1+2\xi}$ to obtain our error term.
The second term in the inequality can be bounded using the entrywise local law of the resolvent \eqref{eq:isolocal}, by denoting $z_{k_m}=\lambda_{k_m}+\I\eta^\prime,$
\begin{equation*}
\Im\sum_{k\neq {k_m}}
\frac{\tilde{f}_s(k)}{N(\lambda_k-z_{k_m})}
=
\frac{1}{N}\ELH{\Im G_{\alpha\beta}^s(z_{k_m})}+\O{\frac{N^{2\xi}}{N^2\eta^\prime}}
=
\O{\frac{N^{2\xi}}{N}\left(\sqrt{\frac{\Im m(z_{k_m})}{N\eta^\prime}}+\frac{1}{N\eta^\prime}\right)}.
\end{equation*}
So that finally
\[
\partial_s \tilde{f}_s(k_m)
\leqslant 
-\frac{1}{\eta^\prime}\left(
	\Im m(z_{k_m})\tilde{f}_s(k_m)
	+
	\O{\frac{N^{3\xi}}{N}\left(\sqrt{\frac{\Im m(z_{k_m})}{N\eta^\prime}}+\frac{1}{N\eta^\prime}\right)}
\right)
\]
which gives, using a Gronwall argument for $\eta^\prime=N^{-\delta_1} s$ where we choose $s$ and $\delta_1$ such that $\eta^\prime\geqslant N^{-2/3},$
\begin{equation}\label{eq:bounddecor1}
\tilde{f}_s(k_m)
=
\O{\frac{N^{3\xi}}{N\sqrt{N\eta^\prime\Im m(z_{k_m})}}}
=
\O{\frac{N^{3\xi+3\delta_1/4}}{N^{3/2}s^{3/4}}}.
\end{equation}
In the last inequality, we used that $\Im m(z_{k_m})$ is bounded below by $\sqrt{\eta^\prime}=N^{-\delta_1/2}s^{1/2}$.
Finally since $\Im z_s$ is increasing along the characteristics we have that $\Im z_s\geqslant \eta$ which gives that using the local law,
\begin{equation}\label{eq:bounddecor2}
\Im \widetilde{G}^0_{\alpha\beta}(z_s)\leqslant \frac{N^\xi}{\sqrt{N\eta}}.
\end{equation}
Finally we have the following bound combining \eqref{eq:bounddecor1} and \eqref{eq:bounddecor2},
\begin{equation}\label{eq:bound2}
\EL{u_{j}^s(\alpha)u_{j}^s(\beta)\Im \widetilde{G}^0_{\alpha\beta}(z_s)}
=
\O{
\frac{N^{4\xi+3\delta_1/4}}{N^2s^{3/4}\eta^{1/2}}}.
\end{equation}

We now need to bound the term with the stochastic integral in \eqref{eq:decomp}. Firstly see that we condition on the whole path of eigenvalues $\bm{\lambda}$ which corresponds to condition on the $\sigma$-field generated by the $B_{kk}$. However, it is independent of the noise driving the eigenvector dynamics \eqref{eq:dysonvect} which gives a zero expectation for all off-diagonal terms in the sum. Thus we simply need to bound
\[
\frac{1}{\sqrt{N}}
\EL{u_{j}^s(\alpha)u_{j}^s(\beta)
M_s
}
\quad\text{with}\quad
M_s = \int_0^s
\sum_{k=1}^N\frac{u_{k}^\tau(\alpha)u_{k}^\tau(\beta)}{(\lambda_k(\tau)-z_{s-\tau})^2}\D B_{kk}(\tau).
\]
We bound the stochastic integral using its quadratic variation via the probability bound, for any $\varepsilon>0$ and $D>0$,
\[
\mathds{P}\left(
	\sup_{0\leqslant u\leqslant s}\vert M_u\vert \geqslant N^\varepsilon \sqrt{\langle M\rangle_s}
\right)
\leqslant N^{-D}.
\]
We have the inequality
\[
\langle M\rangle_s
\leqslant
\int_0^s
\sum_{k=1}^N
\frac{u_{k}^\tau(\alpha)^2u_{k}^\tau(\beta)^2}{\vert\lambda_k(\tau)-z_{s-\tau}\vert^4}\D \tau
\leqslant
\frac{N^{4\xi}}{N^2}
N
\int_0^s \frac{\D \tau}{\eta^4}
\leqslant
\frac{N^{4\xi}s}{N\eta^4}.
\]
In this suboptimal inequality, we used the complete delocalization of eigenvectors to bound the numerator and the fact that $\vert \lambda_k(\tau)-z_{s-\tau}\vert\geqslant \Im z_{s-\tau}\geqslant \eta$. Finally, using the complete delocalization at time $s$ we can now bound
\begin{equation}\label{eq:bound1}
\frac{1}{\sqrt{N}}
\EL{
u_{j}^s(\alpha)u_{j}^s(\beta)
M_s
}
=
\O{\frac{N^{4\xi}\sqrt{s}}{N^2\eta^{2}}}.
\end{equation}
Finally, putting \eqref{eq:decomp}, \eqref{eq:bound2} and \eqref{eq:bound1} together we obtain
\[
\EL{u_{j}^s(\alpha)u_{j}^s(\beta)\Im G_{\alpha\beta}^s(z)}
=
\O{
\frac{N^{3\xi}}{N^2\eta}
+
\frac{N^{4\xi+3\delta_1/4}}{N^2s^{3/4}\eta^{1/2}}
+
\frac{N^{4\xi}\sqrt{s}}{N^2\eta^2}
}.
\]
\end{proof}
To give an idea of the bound from Proposition \ref{prop:decorr}, we give now the parameters scaling we choose when using it. In the next section, we consider a small relaxation time $s=N^{-\theta}$ for a small $\theta\in(0,2/3)$ and the spectral resolution is chosen such that $\eta=N^{-\delta_2}s$. If we take the parameter $\delta_1$ such that $\delta_1\leqslant 2\delta_2$ then the third term in the bound is dominating and we have
\[
\EL{u_{j}^s(\alpha)u_{j}^s(\beta)\Im G_{\alpha\beta}^s(z)}
=
\O{
\frac{N^{4\xi+2\delta_1+3\theta/2}}{N^2}
}.
\]
Since we suppose that $\vert I\vert\leqslant CN^{1-\vartheta}$ for some $\vartheta\in (0,1)$ and choosing our positive parameters $\xi$, $\delta_1$ and $\theta$ small enough such that $\kappa := \vartheta - (4\xi+2\delta_1+3\theta/2)>0$, we obtain that 
\[
\EL{u_{j}^s(\alpha)u_{j}^s(\beta)\Im G_{\alpha\beta}^s(z)}
=
\O{N^{-\kappa}\frac{N^\vartheta}{N^2}}
=
\O{\frac{N^{-\kappa}}{N\vert I\vert}}.
\]
All in all, it can be used to bound the following quantity needed in the proof of the decorrelations of eigenvector fluctuations
\[
\frac{1}{N}\sum_{\substack{\alpha,\beta\in I\\\alpha \neq\beta}}
\EL{u_{j}^s(\alpha)u_{j}^s(\beta)\Im G_{\alpha\beta}^s(z)}
=
\O{N^{-\kappa}\frac{\vert I\vert}{N^2}}
\]
which gives the correct estimates as the leading order is given by the diagonal elements
\[
\frac{1}{N}\sum_{\alpha\in I}u_j^s(\alpha)^2\Im G_{\alpha\alpha}^s(z)
\approx
\frac{\vert I\vert}{N^2}\Im m(z).
\]

\section{Relaxation by the Dyson Brownian motion}
\label{sec:relaxation}
In this section, we make our initial matrix $W$ undergo the Dyson Brownian motion from Definition \ref{def:dyson}. The point being to obtain the asymptotic value of $f^{\mathrm{Fer}}_s$ after a short time $s$ and see that it coincides with the family $(p_{k\ell})_{k\ell}$ being independent Gaussian random variables.

We give the asymptotic value of $f^{\mathrm{Fer}}$ in the following lemma, while this is a simple computation, we give a short proof in order to see a recursion relation. Indeed, it is this recursion relation which occurs in the later proof.
\begin{lemma}\label{lem:detgauss}
Consider $A_n=\mathds{E}\left[
	\det \mathrm{G}
\right]$ where $\mathrm{G}$ is a symmetric $n\times n$ matrix with independent entries (up to the symmetry) given by $\mathrm{G}_{ij}\sim \mathcal{N}(0,1)$ for $i\neq j$ and $\mathrm{G}_{ii}\sim \mathcal{N}(0,2)$. Then we have
\[
A_n=\left\{
	\begin{array}{ll}
		(-1)^{n/2}n!!\quad&\text{if }n\text{ is even},\\
		0&\text{otherwise}.
	\end{array}
\right.
\]
\end{lemma}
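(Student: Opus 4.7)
The plan is to derive a recursion of the form $A_n = -(n-1)\,A_{n-2}$ with base cases $A_0=1$ and $A_1=0$, and then unfold it. The recursion is the right route both because it foreshadows the later dynamical analysis and because it handles the symmetric correlation between entries cleanly. The base cases are immediate: $A_0=1$ by convention on the empty determinant, and $A_1=\mathds{E}[\mathrm{G}_{11}]=0$ since $\mathrm{G}_{11}\sim\mathcal{N}(0,2)$ is centered.

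The main step is to expand the determinant along the first row,
\[
\det \mathrm{G} \;=\; \mathrm{G}_{11}\,M_{11} \;+\; \sum_{j=2}^{n} (-1)^{1+j}\,\mathrm{G}_{1j}\,M_{1j},
\]
where $M_{1j}$ denotes the minor obtained by deleting row $1$ and column $j$, and take the expectation. The $j=1$ contribution vanishes because $M_{11}$ is the principal $(n-1)\times(n-1)$ submatrix, which involves neither row $1$ nor column $1$ of $\mathrm{G}$; hence $M_{11}$ is independent of $\mathrm{G}_{11}$, which has mean $0$. For $j\geqslant 2$ the minor $M_{1j}$ still contains the entry $\mathrm{G}_{j1}=\mathrm{G}_{1j}$ (it sits at position $(j-1,1)$ of $M_{1j}$ once row $1$ and column $j$ of $\mathrm{G}$ are deleted), so there is a genuine correlation to exploit.

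The key computational step is Stein's Gaussian integration by parts applied to the variable $\mathrm{G}_{1j}$ (which has variance $1$ since $j\neq 1$):
\[
\mathds{E}\!\left[\mathrm{G}_{1j}\,M_{1j}\right] \;=\; \mathds{E}\!\left[\partial_{\mathrm{G}_{1j}} M_{1j}\right].
\]
Because $\mathrm{G}_{1j}$ appears linearly and exactly once in $M_{1j}$, a single cofactor expansion gives
\[
\partial_{\mathrm{G}_{1j}} M_{1j} \;=\; (-1)^{(j-1)+1}\,\det N_j \;=\; (-1)^{j}\,\det N_j,
\]
where $N_j$ is the $(n-2)\times(n-2)$ principal submatrix of $\mathrm{G}$ indexed by $\{2,\dots,n\}\setminus\{j\}$. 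Crucially, $N_j$ is itself a principal symmetric submatrix with independent GOE-like entries of the correct variances, so $\mathds{E}[\det N_j]=A_{n-2}$. Plugging in,
\[
\mathds{E}[\det \mathrm{G}] \;=\; \sum_{j=2}^{n} (-1)^{1+j}(-1)^{j}\,A_{n-2} \;=\; -(n-1)\,A_{n-2}.
\]

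Finally I would unfold the recursion: from $A_1=0$ one gets $A_{2m+1}=0$ for all $m\geqslant 0$, while
\[
A_{2m} \;=\; (-1)^{m}\,(2m-1)(2m-3)\cdots 3\cdot 1\cdot A_0 \;=\; (-1)^{n/2}\, n!!,
\]
using the paper's convention $n!!=\prod_{k\leqslant n,\, k\text{ odd}} k$ introduced after \eqref{eq:perfobs}. The only delicate points are bookkeeping the cofactor signs (the two factors $(-1)^{1+j}$ and $(-1)^j$ conspire to leave a uniform $-1$) and checking that $N_j$ truly inherits the same distributional structure as $\mathrm{G}$ at size $n-2$, which is built in to the problem since principal submatrices of a GOE-like matrix are again GOE-like.
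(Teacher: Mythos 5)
Your proof is correct and arrives at the same order-two recursion $A_n = -(n-1)\,A_{n-2}$ that the paper derives; the only cosmetic difference is that you package the second cofactor expansion inside Stein's Gaussian integration-by-parts, whereas the paper expands a second time along the first column and invokes the covariance pairing $\mathds{E}[\mathrm{G}_{1,i}\mathrm{G}_{j+1,1}]=\delta_{i,j+1}$ directly. Both routes reduce to the same sign bookkeeping and to the observation that the remaining principal $(n-2)\times(n-2)$ submatrix has the same law as $\mathrm{G}$.
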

\begin{proof}
This can be computed using a recursion relation of order 2 by developing according to some rows and columns. We write in the following $M_{(i)}^{(j)}$ the matrix $M$ where we removed the line $i$ and the column $j$. We can then develop the determinant in the following way
\begin{equation*}
A_n=\mathds{E}\left[
	\sum_{i=1}^n \mathrm{G}_{1,i} (-1)^{i+1}\det \mathrm{G}_{(1)}^{(i)}
\right]
=
\sum_{i=1}^n\sum_{j=1}^{n-1}(-1)^{i+j}\mathds{E}\left[
	\mathrm{G}_{1,i}\mathrm{G}_{j+1,1}
\right]
\mathds{E}\left[
	\det \mathrm{G}_{(1,j+1)}^{(i,1)}
\right]
\\=-\sum_{i=2}^n A_{n-2}.
\end{equation*}
And this recursion formula gives us the result knowing that
\[
A_1=0\quad\text{and}\quad A_2=-1.
\]
\end{proof}

In order to prove Theorem \ref{theo:mainresult} we only need to analyze the Fermionic and Bosonic observable for two particles. This is done in the next subsection and separately from the case of $n$ particles. Indeed, the estimate from Proposition \ref{prop:decorr} cannot be generalized to the case of $n$ particles and only the delocalization with the local law is used. This gives the correct estimate only if the set of indices is of cardinality $\vert I\vert\ll\sqrt{N}$ while we can reach the optimal $\vert I\vert\ll N$ for two particles.

In the rest of the paper we consider our family of vectors ($\mathbf{q}_\alpha)$ to be the canonical basis and $C_0=\vert I\vert/N$ the correct centering. Note that we could generalize in the same way to any orthonormal basis of vector but consider only this special case for simplicity. The constant, however, need to be given by this value to consider fluctuations of a centered random variable. So that we consider the family $(p_{k\ell})$ as in \eqref{eq:defpkk2}.
\subsection{Fermionic observable with two particles}
For two particles, we want to asymptotically compute our Fermionic observable. First, we denote the probability event $\mathcal{A}_2(\xi,\delta_1,\omega)$ the intersection of $\mathcal{A}_1(\xi,\omega)$ with the events where the bounds from Propositions \ref{prop:decorr} and \ref{lem:queedge} hold uniformly in all indices $k,\ell,\alpha\in\unn{1}{N}$ and $z\in\mathcal{D}_\omega$. Note that we need to consider a relaxation of time $s\gg N^{-2/3}$ for this to be possible. The goal of this subsection is to prove the following theorem.
\begin{proposition}
Let $\vartheta\in(0,\frac{1}{2})$ and $I\subset\unn{1}{N}$ such that $N^{\vartheta}\leqslant \vert I\vert \leqslant N^{1-\vartheta}$. We can take $\xi(\vartheta),\delta_1(\vartheta), \omega(\vartheta)>0$ sufficiently small such that if $f_s^{\mathrm{Fer}}$ is as in \eqref{eq:fermion}, conditionally on $H_0,\bm{\lambda}\in\mathcal{A}_2(\xi,\delta_1,\omega)$, there exists $\theta_2=\theta_2(\vartheta,\xi,\delta_1,\omega)$  sufficiently small such that for $N^{-\theta_2}\leqslant s\leqslant 1$ there exists a positive $\vartheta_2=\vartheta_2(s,\xi\,\delta_1,\omega,\vartheta)$ such that
\begin{equation}\label{eq:resultinterm1}
\sup_{k_1\neq k_2}
\left\vert
	f_s^{\mathrm{Fer}}(k_1,k_2)
	+
	\frac{\vert I\vert}{N^2}
\right\vert
=
\O{N^{-\vartheta_2}\frac{\vert I\vert}{N^2}}.
\end{equation}
\end{proposition}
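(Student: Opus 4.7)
The plan is to exploit the parabolic flow \eqref{eq:emf} satisfied by $f_s^{\mathrm{Fer}}$ specialized to $n=2$ in order to prove short-time relaxation to the equilibrium value $-|I|/N^2$, using the a priori estimates from Section \ref{sec:apriori}. Expanding the $2\times 2$ determinant one has
$$
f_s^{\mathrm{Fer}}(k_1,k_2)=\EL{p_{k_1 k_1}(s) p_{k_2 k_2}(s)-p_{k_1 k_2}(s)^2},
$$
and the natural centering is $h_s(k_1,k_2):=f_s^{\mathrm{Fer}}(k_1,k_2)+|I|/N^2$. Since the right-hand side of \eqref{eq:emf} vanishes on constants, $h_s$ obeys the same flow, and the statement becomes $\sup_{k_1\neq k_2}|h_s(k_1,k_2)|=\O{N^{-\vartheta_2}|I|/N^2}$.

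Following the dual-flow strategy of \cite{bourgade2017eigenvector, bourgade2018random}, I would then introduce the resolvent-weighted smoothing
$$
\mathcal{F}_s(z_1,z_2)\coloneqq\frac{1}{N^2}\sum_{k_1\neq k_2}h_s(k_1,k_2)\,\Im\frac{1}{\lambda_{k_1}(s)-z_1}\,\Im\frac{1}{\lambda_{k_2}(s)-z_2},
$$
at spectral parameters $z_j=E_j+\I\eta$ with $\eta=N^{-\delta_2}s$. Differentiating in $s$, substituting the flow equation \eqref{eq:emf} and performing a summation by parts over the symmetric difference kernel, one turns the evolution of $\mathcal{F}_s$ into an equation whose source, once $p_{k\ell}(s)$ is expanded via the spectral decomposition of $G^s$, consists of correlators of the form $\EL{u_j^s(\alpha)u_j^s(\beta)\Im G^s_{\alpha\beta}(z)}$. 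These are controlled precisely by Proposition \ref{prop:decorr}, which provides an extra factor $N^{-\kappa}$ beyond the trivial delocalization/isotropic-law bound---this gain is exactly what permits covering the full range $|I|\leq N^{1-\vartheta}$.

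The Gronwall loop would be initialized at a time $s_0$ slightly above $N^{-1/3+\omega}$, where Proposition \ref{lem:queedge} yields $|p_{kk}(s_0)|+|p_{k\ell}(s_0)|\leq N^\varepsilon\Psi_1(s_0)$ for all indices, and hence $|h_{s_0}(k_1,k_2)|\leq N^{2\varepsilon}\Psi_1(s_0)^2+|I|/N^2$, which after regularization gives a starting value for $\mathcal{F}_{s_0}$ of the correct order. Integrating the differential inequality from $s_0$ up to any $s\in[N^{-\theta_2},1]$ yields $|\mathcal{F}_s|\leq N^{-\vartheta_2}(|I|/N^2)(\Im m(z_1))(\Im m(z_2))$. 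A pointwise bound on $h_s(k_1,k_2)$ is then recovered by choosing $E_j$ near $\lambda_{k_j}(s)$ and using the rigidity and local laws on the event $\mathcal{A}_1$ to discard the off-diagonal contributions of $(k_1',k_2')\neq(k_1,k_2)$ in the Poisson-type kernel.

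The main obstacle is the source-term estimate in the second step. Without the refined input from Proposition \ref{prop:decorr}, the naive bound $\EL{u_j^s(\alpha)u_j^s(\beta)\Im G^s_{\alpha\beta}(z)}=\O{N^\xi/(N^{3/2}\sqrt{\eta})}$ coming from complete delocalization and the isotropic local law is off by a factor of $\sqrt{N}$ in the intermediate regime of $|I|$, and the proof would only close under the restrictive assumption $|I|\ll\sqrt{N}$ (which is exactly the limitation for general $n$ particles, to be treated later). The dynamical derivation of Proposition \ref{prop:decorr} via the characteristics method on the resolvent equation is precisely what removes this loss for $n=2$. The final bookkeeping is to verify that the small parameters $\xi,\delta_1,\delta_2,\omega,\theta_2$ can be chosen so that the resulting $\vartheta_2$ is strictly positive; given the explicit power counts from the a priori estimates, this is a direct verification.
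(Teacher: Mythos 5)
Your proposal correctly identifies that the two key analytic inputs are Proposition \ref{prop:decorr} (the decorrelation of $u_j^s(\alpha)u_j^s(\beta)$ with $\Im G^s_{\alpha\beta}$, needed to reach $|I|\leqslant N^{1-\vartheta}$ rather than $|I|\ll \sqrt N$) and Proposition \ref{lem:queedge} (a priori size of the $p_{k\ell}$), and the scaling $\eta=N^{-\delta_2}s$ is the right one. However, the route through the resolvent-averaged observable $\mathcal{F}_s(z_1,z_2)$ departs from the paper and has a genuine gap at the last step. The paper works at the \emph{maximizer} $\mathbf{k^m}$ of $f_s^{\mathrm{Fer}}$: there each term $f_s^{\mathrm{Fer}}((\mathbf{k^m})^i(\ell))-f_s^{\mathrm{Fer}}(\mathbf{k^m})\leqslant 0$, so the $\eta$-regularization
\[
\frac{1}{N(\lambda_{k_i^m}-\lambda_\ell)^2}\;\longrightarrow\;\frac{1}{\eta}\cdot\frac{\eta/N}{(\lambda_{k_i^m}-\lambda_\ell)^2+\eta^2}
\]
is a legitimate one-sided inequality (and the same for the infimum), and one never leaves the pointwise world. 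In your scheme you pass to an $\eta$-smoothed average over \emph{all} pairs $(k_1,k_2)$; but since $\eta\gg 1/N$, each Poisson kernel covers $\sim N\eta\gg 1$ indices, and you are averaging a \emph{signed} quantity $h_s$ over many pairs. A small $\mathcal{F}_s$ then does not yield a small $\sup_{k_1\neq k_2}|h_s(k_1,k_2)|$ without additional input (a Hölder/Lipschitz bound on $h_s$ in the index, a parabolic regularity theorem, or a short-range coupling), none of which is supplied in your sketch. The ``discard the off-diagonal contributions'' step is therefore not a direct verification; it is exactly where the maximum-principle route sidesteps a real difficulty.

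A secondary point: the ``summation by parts over the symmetric difference kernel'' applied to the non-local operator in \eqref{eq:emf} is not a standard discrete integration by parts, and your sketch does not make precise what boundary terms arise when the $\Im(\lambda_{k_j}-z_j)^{-1}$ weights are hit by the kernel; this would need to be checked to confirm that the only surviving source terms are indeed the correlators treated by Proposition \ref{prop:decorr}. The paper's approach avoids this entirely by inserting the trivial centering $|I|/N^2$ only \emph{after} evaluating $\Im\sum_\ell p_{\ell\ell}/(N(\lambda_\ell-z_{k_1}))$ and $\Im\sum_\ell p_{\ell k_2}^2/(N(\lambda_\ell-z_{k_1}))$ with the local laws, then closing a Gronwall inequality at the maximizer (and symmetrically at the minimizer) with $\Im m(z_{k_i})\gtrsim\sqrt{\eta}$.
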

\begin{proof}
For readibility, we omit the time dependence for our overlaps, our eigenvectors and eigenvalues. We can write our Fermionic observable as 
\[
f_s^{\mathrm{Fer}}(k_1,k_2)=\mathds{E}\left[
	p_{k_1k_1}p_{k_2k_2}-p_{k_1k_2}^2
	\middle\vert
	\bm{\lambda}
\right].
\]
We use a maximum principle for this observable since it follows the parabolic equation \eqref{eq:emf} and obtain the result by a Gronwall argument. Consider $\mathbf{k^m}=(k^m_1,k^m_2)$ the multi-index corresponding to the maximum of the function $f_s^{\mathrm{Fer}}$ chosen as in the earlier proofs so that 
\[
f_s^{\mathrm{Fer}}(\mathbf{k^m})=\sup_{\mathbf{k},\,\vert \mathbf{k}\vert=2}f_s^{\mathrm{Fer}}(\mathbf{k}).
\]
Then we have, since $f_s^{\mathrm{Fer}}$ follows \eqref{eq:emf} and $\mathbf{k^m}$ is the index for which $f_s^{\mathrm{Fer}}$ is the maximum, for any positive $\eta$,
\begin{multline*}
\partial_s f_s^{\mathrm{Fer}}(\mathbf{k^m})
=
2\sum_{i=1}^2
\sum_{\ell\notin\{k_1^m,k_2^m\}}\frac{f_s^{\mathrm{Fer}}((\mathbf{k^m})^i(\ell))-f_s^{\mathrm{Fer}}(\mathbf{k^m})}{N(\lambda_{k_i^m}-\lambda_\ell)^2}
\\
\leqslant
\frac{2}{\eta}\sum_{i=1}^2
\frac{1}{N}\!\sum_{\ell\notin\{k_1^m,k_2^m\}}\!\frac{(f_s^{\mathrm{Fer}}((\mathbf{k^m})^i(\ell))-f_s^{\mathrm{Fer}}(\mathbf{k^m}))\eta}{(\lambda_{k_i^m}-\lambda_\ell)^2+\eta^2}.
\end{multline*}

We consider only the terms in the first sum of the right hand side for readability, the other term can be bounded in exactly the same way. First note that adding this parameter $\eta$ made imaginary parts arise. Namely, we have the formula with $z_k\coloneqq \lambda_k+\I\eta$,
\[
\frac{1}{N}\sum_{\ell\notin \{k_1^m,k_2^m\}}f_s^{\mathrm{Fer}}(\mathbf{k^m})\frac{\eta}{(\lambda_{k_i^m}-\lambda_\ell)^2+\eta^2}
=
f_s^{\mathrm{Fer}}(\mathbf{k^m})\Im m(z_{k_i})
+
\mathcal{O}\left(
	\frac{N^\xi}{N\eta}\Psi_1^2(s)
\right)
\]
where we used Proposition \ref{lem:queedge} and \eqref{eq:avelocal} for the error term. Now, we need to control the term involving $f_s^{\mathrm{Fer}}((\mathbf{k^m})^i(\ell))=\mathds{E}\left[p_{k_{3-i}k_{3-i}}p_{\ell\ell}-p_{k_{3-i}\ell}\middle\vert \bm{\lambda}\right].$ As said before we consider the term $i=1$, thus we can write
\begin{equation}\label{eq:interm1}
\frac{1}{N}\sum_{\ell\notin\{k_1^m,k_2^m\}}\frac{(p_{\ell\ell}p_{k_2k_2}-p_{\ell k_2}^2)\eta}{(\lambda_{k_1}-\lambda_\ell)^2+\eta^2}=p_{k_2k_2}\Im \sum_{\ell=1}^N\frac{p_{\ell\ell}}{N(\lambda_\ell-z_{k_1})}-\Im \sum_{\ell=1}^N\frac{p_{\ell k_2}^2}{N(\lambda_{\ell}-z_{k_1})}
+
\mathcal{O}{}\left(
	\frac{N^\xi}{N\eta}\Psi_1^2(s)
\right).
\end{equation}
These two sums can be written in terms of the resolvent defined in \eqref{eq:resolvent} and control them with the isotropic local law \eqref{eq:isolocal}. Indeed, by definition of our overlaps,
\begin{multline}\label{eq:pll}
\Im \sum_{\ell=1}^N\frac{p_{\ell\ell}}{N(\lambda_\ell -z_{k_1})}
=
\Im \sum_{\alpha \in I}\sum_{\ell=1}^N\frac{u_\ell(\alpha)^2}{N(\lambda_\ell-z_{k_1})}-\frac{\vert I\vert}{N}\Im m(z_{k_1})
+\mathcal{O}\left(
	\frac{N^\xi\vert I\vert}{N^2\eta}
\right)
\\
=
\frac{1}{N}\Im\left(
	\sum_{\alpha\in I}\left(
		G_{\alpha\alpha}(z_{k_1})-m(z_{k_1})
	\right)
\right)
=
\mathcal{O}{}\left(
\frac{N^\xi\vert I\vert}{N\sqrt{N\eta}}	
\right).
\end{multline}
For the second term in \eqref{eq:interm1}, we can also write it in terms of the resolvent but we need to keep the conditional expectation on the path of eigenvalues,
\begin{multline*}
\Im \sum_{\ell=1}^N\frac{\EL{p_{\ell k_2}^2}}{N(\lambda_\ell-z_{k_1})}=\frac{1}{N}\sum_{\alpha,\beta\in I}\EL{u_{k_2}(\alpha)u_{k_2}(\beta)\Im G_{\alpha\beta}(z_{k_1})}
\\=\frac{1}{N}\sum_{\alpha\in I}\EL{{u_{k_2}}(\alpha)^2}\Im m(z_{k_1})
+
\mathcal{O}{}\left(
	\frac{\vert I\vert^2}{N}N^{5\xi+\delta_1}\Psi_2(s,\eta)
\right)
\end{multline*}
where we used Proposition \ref{prop:decorr}. To evaluate the first term, we can use Proposition \ref{lem:queedge} which gives us
\begin{equation}\label{eq:plk2}
\Im \sum_{\ell=1}^N\frac{\EL{p_{\ell k_2}^2}}{N(\lambda_\ell-z_{k_1})}
=
\frac{\vert I\vert}{N^2}\Im m(z_{k_1})
+
\O{\frac{N^\xi}{N}\Psi_1(s)+\frac{\vert I\vert^2}{N}N^{5\xi+\delta_1}\Psi_2(s,\eta)}.
\end{equation}

 Now, combining the estimates \eqref{eq:interm1}, \eqref{eq:pll} and \eqref{eq:plk2} we obtain the inequality,
\begin{multline*}
\partial_s\left(
	f^{\mathrm{Fer}}_s(\mathbf{k^m})+\frac{\vert I\vert}{N^2}
\right)
\leqslant
-\frac{\Im m(z_{k_1})+\Im m(z_{k_2})}{\eta}\left(
	f^{\mathrm{Fer}}_s(\mathbf{k^m})+\frac{\vert I\vert}{N^2}
\right)
\\
+
\mathcal{O}{}\left(
	\frac{1}{\eta}\left(
		\frac{N^{\xi}}{N\eta}\Psi_1^2(s)+\frac{N^\xi\vert I\vert}{N\sqrt{N\eta}}\Psi_1(s)+\frac{N^\xi}{N}\Psi_1(s)+\frac{\vert I\vert^2}{N}N^{5\xi+\delta_1}\Psi_2(s,\eta)
	\right)
\right).
\end{multline*}

If we take $\vartheta_2>0$ and $\omega>0$ small enough, we can consider $\delta_2>0$ such that $\eta = s^2N^{-\delta_2}\gg N^{-2/3}$ and $z_k\in\mathcal{D}_\omega$. In this case, by the rigidity of eigenvalues from $\mathcal{A}_1$ we have that $\Im m(z_{k_i})\geqslant \sqrt{\eta}$ so that we can rewrite the previous bound as
\begin{multline*}
 	\partial_s\left(
 	f^{\mathrm{Fer}}_s(\mathbf{k^m})+\frac{\vert I\vert}{N^2}
 	\right)
 	\leqslant
 	-\frac{c}{\sqrt{\eta}}\left(
 	f^{\mathrm{Fer}}_s(\mathbf{k^m})+\frac{\vert I\vert}{N^2}
 	\right)
 	\\
 	+
 	\mathcal{O}{}\left(
 	\frac{1}{\sqrt{\eta}}\left(
 	\frac{N^{\xi}}{N\eta^{3/2}}\Psi_1^2(s)+\frac{N^\xi\vert I\vert}{N^{3/2}\eta}\Psi_1(s)+\frac{N^\xi}{N\sqrt{\eta}}\Psi_1(s)+\frac{\vert I\vert^2}{N\sqrt{\eta}}N^{5\xi+\delta_1}\Psi_2(s,\eta)
 	\right)
 	\right).
 \end{multline*}
By Gronwall's lemma, we then obtain for any $D>0$,
\begin{multline*}
f^{\mathrm{Fer}}_s(\mathbf{k^m})+\frac{\vert I\vert}{N^2}
=
\O{
\frac{N^{\xi+3\delta_2/2}}{Ns^{3/2}}\Psi_1^2(s)
+
\frac{N^{\xi+\delta_2}\vert I\vert}{N^{3/2}s}\Psi_1(s)
+
\frac{N^{\xi+\delta_2/2}}{N\sqrt{s}}\Psi_1(s)
+
\right.\\
\left.
\frac{\vert I\vert^2N^{5\xi+\delta_1+\delta_2/2}}{N\sqrt{s}}\Psi_2(s,N^{-\delta_2}s)
+
N^{-D}
}.
\end{multline*}  
We can then consider $\theta_2(\vartheta),\xi(\vartheta)$ and $\delta_1(\vartheta)$ sufficiently small such that for $s\in [N^{-\theta_2},1]$ there exists $\vartheta_2(\xi, \delta_1, \delta_2, s)>0$ such that 
\[
f^{\mathrm{Fer}}_s
(\mathbf{k^m})=-\frac{\vert I\vert}{N^2}+\mathcal{O}\left(\frac{\vert I\vert}{N^2}N^{-\vartheta_2}\right).
\]
Thus, the case $n=2$ has been proved and we obtain the correct value as in Lemma \ref{lem:detgauss}.
\end{proof}
\subsection{Fermionic observable with \texorpdfstring{$\bm{n}$}{n} particles} 
We show in this subsection that our determinant is asymptotically close to $A_n$ and thus confirming the idea that, in the sense of moments, the family of $(p_{k\ell})$ are independent Gaussian for broader moments than just correlations. Unfortunately, the knowledge of these moments does not seem enough to say that the whole family behaves in this fashion. To obtain Theorem \ref{theo:mainresult}, we only need the previous subsection with two particles but we state here the theorem for any value of $n$. Note that we need here stronger assumptions on our observable as we consider $\vert I\vert\ll \sqrt{N}$ instead of the optimal $\vert I\vert\ll N$. This is due to the fact that there does not seem to be a direct way to generalize the proof of Proposition \eqref{prop:decorr} to $n$ particles.

\begin{theorem}\label{theo:resultinterm}
Let $n\in\mathbb{N}.$ Let $\vartheta\in(0,\frac{1}{4})$ and $I\subset\unn{1}{N}$ such that $N^{\vartheta}\leqslant \vert I\vert \leqslant N^{1/2-\vartheta}$. We can take $\xi(n,\vartheta),\delta_1(n,\vartheta), \omega(n,\vartheta)>0$ sufficiently small such that if $f_s^{\mathrm{Fer}}$ is as in \eqref{eq:fermion}, conditionally on $H_0,\bm{\lambda}\in\mathcal{A}_2(\xi,\delta_1,\omega)$, there exists $\theta_n=\theta_n(n,\vartheta,\xi,\delta_1,\omega)$  sufficiently small such that for $N^{-\theta_n}\leqslant s\leqslant 1$ there exists a positive $\vartheta_n=\vartheta_2(n,s,\xi\,\delta_1,\omega,\vartheta)$ such that
\begin{equation}\label{eq:resultinterm}
\sup_{\mathbf{k}, \vert \mathbf{k}\vert=n}
\left\vert
	f_s^{\mathrm{Fer}}(\mathbf{k})
	-
	\left(
		\frac{\sqrt{\vert I\vert}}{N}
	\right)^{n}A_n
\right\vert =\mathcal{O}{}\left(
	\left(
		\frac{\sqrt{\vert I\vert}}{N}
	\right)^{n}
	N^{-\vartheta_n}
\right). 
\end{equation}
\end{theorem}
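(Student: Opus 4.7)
The plan is to argue by induction on $n$, mirroring the maximum-principle/Gronwall strategy of the $n=2$ case. The base cases $n=0$ (trivially $f^{\mathrm{Fer}}_s \equiv 1 = A_0$) and $n=1$ (where $f^{\mathrm{Fer}}_s(k) = \EL{p_{kk}}$ is small by Proposition \ref{lem:queedge}, matching $A_1 = 0$), together with the $n=2$ case of the preceding subsection, furnish the base. For the induction step, assume the estimate for all $(n-2)$-tuples, pick $\mathbf{k}^m$ achieving the piecewise-constant-in-$s$ supremum of $|f^{\mathrm{Fer}}_s(\mathbf{k}) - (\sqrt{|I|}/N)^n A_n|$, apply \eqref{eq:emf} with spectral resolution $\eta = s^2 N^{-\delta_2}$, and aim for a Gronwall inequality of the form
\[
\partial_s \bigl(f^{\mathrm{Fer}}_s(\mathbf{k}^m) - (\sqrt{|I|}/N)^n A_n\bigr) \leqslant -\frac{c}{\sqrt{\eta}} \bigl(f^{\mathrm{Fer}}_s(\mathbf{k}^m) - (\sqrt{|I|}/N)^n A_n\bigr) + (\text{admissible errors}).
\]

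The algebraic heart of the argument is a double cofactor expansion of $\det P_s(\mathbf{k}^{m,i}(\ell))$. Expanding first along the $\ell$-row produces a diagonal contribution $p_{\ell\ell}\det P_s(\mathbf{k}^{m,(i)})$ plus off-diagonal contributions $(-1)^{i+j}p_{\ell k_j^m}\det M^{(i,j)}(\ell)$, where $M^{(i,j)}(\ell)$ still contains an $\ell$-dependent column. Summed against $1/(N(\lambda_\ell - z_{k_i}))$ with $z_{k_i} := \lambda_{k_i^m} + \I\eta$, the diagonal piece collapses to $\det P_s(\mathbf{k}^{m,(i)}) \cdot \frac{1}{N}\sum_{\alpha\in I}(G^s_{\alpha\alpha}(z_{k_i}) - m(z_{k_i}))$, which is small by the averaged local law. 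Expanding each $\det M^{(i,j)}(\ell)$ once more along its $\ell$-column and summing over $\ell$, the remaining contributions become
\[
\frac{1}{N}\sum_{\alpha,\beta\in I}u^s_{k_j^m}(\alpha)u^s_{k_{j'}^m}(\beta)\,\Im G^s_{\alpha\beta}(z_{k_i}) \cdot \det P_s(\mathbf{k}^{m,(i,j')}).
\]
The diagonal part $\alpha=\beta$ of this double sum contributes $\delta_{jj'}\frac{|I|}{N^2}\Im m(z_{k_i})$ by QUE; summing the $(n-1)$ choices $j = j'$ with $j \neq i$, tracking the cofactor signs, and invoking the induction hypothesis $\EL{\det P_s(\mathbf{k}^{m,(i,j')})} = (\sqrt{|I|}/N)^{n-2}A_{n-2}(1+o(1))$, one verifies via the recursion $A_n = -(n-1)A_{n-2}$ extracted from the proof of Lemma \ref{lem:detgauss} that the source term in the Gronwall inequality matches the target value $(\sqrt{|I|}/N)^n A_n$ exactly.

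The main obstacle is controlling the off-diagonal ($\alpha\neq\beta$) part of the above double sum. For $n=2$ this was the role of Proposition \ref{prop:decorr}, but that estimate has no direct $n$-particle analogue. One is reduced to the crude bound, combining eigenvector delocalization with the isotropic local law,
\[
\left|\frac{1}{N}\sum_{\substack{\alpha,\beta\in I\\\alpha\neq\beta}}u^s_{k_j^m}(\alpha)u^s_{k_{j'}^m}(\beta)\,\Im G^s_{\alpha\beta}(z_{k_i})\right| \lesssim \frac{|I|^2 N^{3\xi}}{N^3}\sqrt{\frac{\Im m(z_{k_i})}{N\eta}},
\]
which, multiplied by the induction bound on the $(n-2)$-particle determinant (of order $(\sqrt{|I|}/N)^{n-2}$), must be dominated by $N^{-\vartheta_n}(\sqrt{|I|}/N)^n \cdot \sqrt{\eta}$. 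The comparison forces $|I|^2 \ll N$, i.e.\ $|I| \leqslant N^{1/2-\vartheta}$, which is precisely the stricter hypothesis of Theorem \ref{theo:resultinterm}.

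Combining these estimates with the control of higher-order perturbations in the cofactor expansion (handled via Young-type inequalities applied to Lemma \ref{lem:holder} and Proposition \ref{lem:queedge} for products of $p_{ij}$'s) and executing the Gronwall argument on $[N^{-\theta_n}, 1]$ with $\theta_n = \theta_n(n, \vartheta, \xi, \delta_1, \omega)$ chosen small enough yields \eqref{eq:resultinterm}.
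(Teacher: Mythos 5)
Your proposal is correct and takes essentially the same route as the paper: the double cofactor/Laplace expansion you use is exactly the reorganization of the paper's Leibniz-formula partition of $\mathfrak{S}_n$ into fixed points at $i$ (giving the $p_{\ell\ell}$ averaged-local-law term), transpositions through $i$ (giving the $(n-1)$-fold sum of principal $(n-2)$-minors, matched to the recursion $A_n=-(n-1)A_{n-2}$ from Lemma~\ref{lem:detgauss}), and longer cycles through $i$ (the error term (III), whose crude off-diagonal bound is what forces $|I|\ll\sqrt N$). The only small inaccuracy is that for $j\neq j'$ your double expansion leaves a \emph{non-principal} $(n-2)\times(n-2)$ minor, not $\det P_s(\mathbf{k}^{m,(i,j')})$; but since those contributions are entirely absorbed into the error term, this does not affect the argument.
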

Note that in the proof we always do a maximum principle in order to obtain our leading order but the same estimates can be done on the infimum of our observable so that we get our result.
\begin{proof}[Proof of Theorem \ref{theo:resultinterm}]
We reason by induction and use a recursion formula in order to obtain the value of our Fermionic observable, we thus need to obtain an estimate on the observable for small $n$ (the size of the determinant). We already obtained an estimate for $n=2$ (which also holds in the particular case of $\vert I\vert\leqslant N^{1/2-\vartheta}$). We now describe the estimate we need for $n=1$. 

Let $\xi>0$. In this case we have that
$f^{\mathrm{Fer}}_s(k)=\mathds{E}\left[
	p_{kk}\middle\vert\bm{\lambda} 
\right]
$. We can obtain an estimate by using a maximum principle on $f^{\mathrm{Fer}}_s$. Consider $k_m$ the index, chosen similarly as in the proofs above, such that 
\[
f^{\mathrm{Fer}}_s(k_m)=\sup_{k\in\unn{1}{N}} f^{\mathrm{Fer}}_s(k).
\]
Then we have, since $f_s^{\mathrm{Fer}}$ follows the dynamics $\eqref{eq:emf}$, that for any $\eta>0,$
\[
\partial_s f_s^{\mathrm{Fer}}(k_m)=2\sum_{\ell\neq k_m}\frac{f^{\mathrm{Fer}}_s(\ell)-f_s^{\mathrm{Fer}}(k_m)}{N(\lambda_\ell-\lambda_{k_m})^2}
\leqslant
\frac{2}{\eta}\mathds{E}\left[
	\frac{1}{N}
	\sum_{\ell\neq k_m}
\frac{(p_{\ell\ell}-p_{k_mk_m})\eta}{(\lambda_\ell-\lambda_{k_m})^2+\eta^2}
	\middle\vert\bm{\lambda}
\right].
\]
Now, one can see that 
\[
p_{k_mk_m}\frac{1}{N}\sum_{\ell\neq k_m}\frac{\eta}{(\lambda_\ell-\lambda_{k_m})^2+\eta^2}=p_{k_mk_m}\Im m(z_{k_m})+\mathcal{O}{}\left(\frac{N^\xi}{N\eta}\Psi_1(s)\right)
\]
where we reintroduced the notation $z_{k_i}=\lambda_{k_i}+\I\eta$. For the other term, we use the entryise local law from \eqref{eq:isolocal} from the event $\mathcal{A}_1$,
\begin{multline*}
\frac{1}{N}\sum_{\ell\neq k_m}\frac{p_{\ell\ell}\eta}{(\lambda_\ell-\lambda_{k_m})^2+\eta^2}=
\frac{1}{N}\sum_{\alpha\in I}G_{\alpha\alpha}(z_{k_m})-\frac{\vert I\vert}{N}\Im m(z_{k_m})+\mathcal{O}{}\left(
	\frac{N^\xi}{N\eta}\Psi_1(s)
\right)\\
=
\mathcal{O}{}\left(
	\frac{N^\xi\vert I\vert}{N\sqrt{N\eta}}+\frac{N^\xi}{N\eta}\Psi_1(s)
\right).
\end{multline*}
Thus, we obtain the following Gronwall type inequality,
\[
\partial_s f^{\mathrm{Fer}}_s(k_m)
\leqslant
-\frac{2\Im m(z_{k_m})}{\eta}f^{\mathrm{Fer}}_s(k_m)+\mathcal{O}{}\left(
\frac{\vert I\vert N^\xi}{N\eta\sqrt{N\eta}}+\frac{N^\xi}{N\eta^2}\Psi_1(s)
\right)
\]
which gives us that, as long as we consider $\theta_1, \delta_2, \omega$  small enough, we have  $\eta=s^2N^{-\delta_2}\gg N^{-2/3}$, 
\[
f_s^{\mathrm{Fer}}(k_m)=\mathcal{O}{}\left(
	\frac{\vert I\vert N^\xi}{N^{3/2}\eta}+\frac{N^\xi}{N\eta^{3/2}}\Psi_1(s)
\right).
\]
Thus, we can choose $\theta_1, \xi,\delta_2$ small enough to obtain an estimate such that the error is smaller than $\sqrt{\vert I\vert}/N$ for $s\in[N^{-\theta_1},1]$. w
e need to consider $s$ such that 
Note that the choices of our parameters is consistent since we consider $\vert I\vert\ll \sqrt{N}\ll N$. The condition $\vert I\vert\ll \sqrt{N}$ is actually only needed for the $n\geqslant 3$ particles case. Thus, the case $n=1$ goes in the direction of Lemma \ref{lem:detgauss}.

Consider now the case where $n$ is an integer greater than 2. For the general case, we develop our Fermionic observable via the Leibniz formula, for $\mathbf{k}$ such that $\vert \mathbf{k}\vert=n$ and $\mathfrak{S}_n$ the set of permutations of $\unn{1}{n}$,
\[
f^{\mathrm{Fer}}_s(\mathbf{k})
=
\mathds{E}\left[
	\det P_s(\mathbf{k})
	\middle\vert
	\bm{\lambda}
\right]=\sum_{\sigma\in \mathfrak{S}_n}\epsilon(\sigma)\mathds{E}\left[
	\prod_{i=1}^n p_{k_ik_{\sigma(i)}}(s)
	\middle\vert
	\bm{\lambda}
\right].
\]
As earlier, we use a maximum principle technique in order to obtain the leading order for Theorem \ref{theo:resultinterm}. Consider $\mathbf{k^m}$ maximizing $f^\mathrm{Fer}_s(\mathbf{k})$ chosen as before and write
\begin{multline}\label{eq:interm2}
\partial_s f_s^{\mathrm{Fer}}(\mathbf{k^m})
=
2\sum_{i=1}^n
\sum_{\ell\notin\{k_1^m,\dots,k_n^m\}}\frac{f_s^{\mathrm{Fer}}((\mathbf{k^m})^i(\ell))-f_s^{\mathrm{Fer}}(\mathbf{k^m})}{N(\lambda_{k_i^m}-\lambda_\ell)^2}
\\
\leqslant
\frac{2}{\eta}\sum_{i=1}^n
\!\sum_{\ell\notin\{k_1^m,\dots,k_n^m\}}\frac{(f_s^{\mathrm{Fer}}((\mathbf{k^m})^i(\ell))-f_s^{\mathrm{Fer}}(\mathbf{k^m}))\eta}{N((\lambda_{k_i^m}-\lambda_\ell)^2+\eta^2)}.
\end{multline}
Now, we can also write that since $n$ is fixed independent of $N$,
\[
\frac{1}{N}\sum_{\ell\notin \{k_1^m,\dots,k_n^m\}}f_s^{\mathrm{Fer}}(\mathbf{k^m})\frac{\eta}{(\lambda_{k_i^m}-\lambda_\ell)^2+\eta^2}
=
f_s^{\mathrm{Fer}}(\mathbf{k^m})\Im m(z_{k_i^m})
+
\mathcal{O}{}\left(
	\frac{N^{(n+1)\xi}}{N\eta}\Psi_1^n(s)
\right)
\]
where we used the fact that $f_s^{\mathrm{Fer}}(\mathbf{k^m})\leqslant N^{n\xi}\Psi_1^n(s)$ and the local law \eqref{eq:avelocal}. 

In order to control $f^{\mathrm{Fer}}_s((\mathbf{k^m})^i(\ell))$, we partition $\mathfrak{S}_n$ into three sets which give different contributions to the result, note that we make the permutations on the set given by the indices in $(\mathbf{k^m})^i(\ell)$ but since the number of indices stay constant and is equal to $n$, this dependence does not matter in our computations,
\begin{align*}
\mathfrak{S}^{(1)}_n(i)&=\left\{
	\sigma\in \mathfrak{S}_n:\, \sigma(i)=i
\right\},\\
\mathfrak{S}^{(2)}_n(i)&=\left\{
	\sigma\in \mathfrak{S}_n:\, \sigma(i)=\sigma^{-1}(i)\quad\text{and}\quad \sigma(i)\neq i
\right\},\\
\mathfrak{S}^{(3)}_n(i)&=\mathfrak{S}_n\setminus(\mathfrak{S}_n^{(1)}\sqcup\mathfrak{S}_n^{(2)}).
\end{align*}
Now see that we can write, for a fixed $i\in\unn{1}{n},$
\[
\frac{1}{N}\sum_{\ell\notin\{k_1^m,\dots,k_n^m\}}\frac{\eta f_s^{\mathrm{Fer}}((\mathbf{k^m})^i(\ell))}{(\lambda_{k_i^m}-\lambda_\ell)^2+\eta^2}=\frac{1}{N}\Im \sum_{\ell=1}^N\frac{f_s^{\mathrm{Fer}}((\mathbf{k^m})^i(\ell))}{(\lambda_{\ell}-z_{k_i^m})}
+
\mathcal{O}{}\left(
	\frac{N^{n\xi}}{N\eta}\Psi^n_1(s)
\right).
\]
By developing $f_s^{\mathrm{Fer}}((\mathbf{k^m})^i(\ell))$ according to the Leibniz formula and separating this sum in three terms with respect to the prior partition of $\mathfrak{S}_n$, we now have to control three terms. The first one can be written as
\[
\mathrm{(I)}:=\sum_{\sigma\in\mathfrak{S}^{(1)}_n(i)}\epsilon(\sigma)
\Im \sum_{\ell=1}^N\frac{p_{\ell\ell}}{N(\lambda_\ell-z_{k_i})}
\prod_{\substack{j=1\\j\neq i}}^np_{k_jk_{\sigma(j)}}
=
\mathcal{O}\left(
	\frac{N^{n\xi}\vert I\vert}{N\sqrt{N\eta}}\Psi_1^{n-1}(s)
\right)
\]
using the local law from \eqref{eq:isolocal} and Proposition \ref{lem:queedge}. Now, for the contribution of $\mathfrak{S}_n^{(2)}$ we have to control
\begin{equation*}
\mathrm{(II)}
:=
\sum_{\sigma\in\mathfrak{S}_n^{(2)}(i)}\epsilon(\sigma)
\Im\sum_{\ell=1}^N\frac{p_{\ell k_{\sigma(i)}}^2}{N(\lambda_{\ell}-z_{k_i})}
\prod_{\substack{j\neq i\\j\neq \sigma(i)}}p_{k_jk_{\sigma(j)}}.
\end{equation*}
Recall the definition of $p_{\ell k_i}$ \eqref{eq:defpkk2}, we can write the previous term as a resolvent in the following way
\begin{multline*}
\Im\sum_{\ell=1}^N\frac{p_{\ell k_{\sigma(i)}}^2}{N(\lambda_{\ell}-z_{k_i})}
=
\frac{1}{N}
\sum_{\alpha,\beta\in I}
u_{k_{\sigma(i)}}(\alpha)
u_{k_{\sigma(i)}}(\beta)
\Im G_{\alpha\beta}(z_{k_i})
\\=
\frac{\vert I\vert}{N^2}\Im m(z_{k_i})
+
\O{
	\frac{N^\xi\Psi_1(s)}{N}
	+
	\frac{N^{\xi}\vert I\vert}{N^2\sqrt{N\eta}}
	+
	\frac{N^{2\xi}\vert I\vert^2}{N^2\sqrt{N\eta}}
}.
\end{multline*}
The leading contribution in the previous equation comes from the diagonal terms in the sum. The first error term comes from applying Proposition \ref{lem:queedge}, the second from applying \eqref{eq:isolocal} and the last one from combining \eqref{eq:deloc} and \eqref{eq:isolocal} to bound the off-diagonal terms. Note that the second error term is always smaller than the last one.

 It is possible to see (II) as a sum over the possible $\sigma(i)$ in the product so that we can write it as a sum of determinant of size $n-2$ in order to conclude later by induction. Indeed, we have
\[
\mathrm{(II)}=-\frac{\vert I\vert}{N^2}\Im m(z_{k_i})\sum_{\substack{i_0=1\\i_0\neq i}}^n\det {P_s}_{(i,i_0)}^{(i,i_0)}
+\mathcal{O}{}\left(
	\left(
		\frac{N^\xi\Psi_1(s)}{N}
		+
		\frac{N^{2\xi}\vert I\vert^2}{N^2\sqrt{N\eta}}
	\right)
	N^{(n-2)\xi}\Psi_1^{n-2}(s).
\right).
\]
Note that in the previous equation we obtain a minus sign from the signatures of the permutations. Indeed, as the estimate removed the cycle $(k_ik_{\sigma(i)})$, it removed two elements from the set so that if one writes the signature as $\epsilon(\sigma)=(-1)^{n-\mathscr{C}(\sigma)}$ with $\mathscr{C}(\sigma)$ the number of cycles of the permutation $\sigma$, the new signature becomes $(-1)^{n-2-\mathscr{C}(\sigma)+1}=-\epsilon(\sigma).$ It remains to bound the last term coming from $\mathfrak{S}_n^{(3)}$,
\[
\mathrm{(III)}= \sum_{\sigma\in\mathfrak{S}_n^{(3)}(i)}\epsilon(\sigma)
\Im \sum_{\ell=1}^N\frac{p_{\ell k_{\sigma(i)}}p_{k_{\sigma^{-1}(i)}\ell}}{N(\lambda_\ell-z_{k_i})}
\prod_{\substack{j\neq i\\j \neq \sigma^{-1}(j)}}p_{k_jk_{\sigma(j)}}.
\]
Now, we can write the last sum as,
\begin{multline*}
\Im \sum_{\ell=1}^N\frac{p_{\ell k_{\sigma(i)}}p_{k_{\sigma^{-1}(i)}\ell}}{N(\lambda_\ell-z_{k_i})}
=
\frac{1}{N}\sum_{\alpha,\beta\in I}u_{k_{\sigma(i)}}(\alpha)u_{k_{\sigma^{-1}(i)}}(\beta)\Im G_{\alpha\beta}(z_{k_i}) 
\\
= \frac{1}{N}\Im m(z_{k_i})p_{k_{\sigma(i)}k_{\sigma^{-1}}(i)}
+
\mathcal{O}{}\left(
	\frac{N^{2\xi}}{N\sqrt{N\eta}}\Psi_1(s)+\frac{N^{2\xi}\vert I\vert^2}{N^2\sqrt{N\eta}}
\right)
=
\mathcal{O}{}\left(
	\frac{N^{2\xi}}{N}\Psi_1(s)+\frac{N^{2\xi}\vert I\vert^2}{N^2\sqrt{N\eta}}
\right)
\end{multline*}
which gives us that 
\[
\mathrm{(III)}=\mathcal{O}{}\left(
	\frac{N^{n\xi}}{N}\Psi_1^{n-1}(s)+\frac{N^{n\xi}\vert I\vert^2}{N^2\sqrt{N\eta}}\Psi_1^{n-2}(s)
\right).
\]
Finally, putting all these estimates together in \eqref{eq:interm2} , we obtain the following inequality
\begin{multline*}
\partial_sf^{\mathrm{Fer}}_s(\mathbf{k^m})\leqslant
-C\sum_{i=1}^n
\frac{\Im m(z_{k_i})}{\eta}\left(
	f_s^{\mathrm{Fer}}(\mathbf{k}^m)+\frac{\vert I\vert}{N^2}\sum_{\substack{i_0=1\\i_0\neq i}}^n
	\mathds{E}\left[
		\det {P_s}_{(i,i_0)}^{(i,i_0)}
		\middle\vert
		\bm{\lambda}
	\right]
\right)\\
+
\mathcal{O}{}\left(
	\frac{N^{(n+1)\xi}\Psi_1^n(s)}{N\eta^2}
	+
	\frac{N^{n\xi}\vert I\vert\Psi_1^{n-1}(s)}{(N\eta)^{3/2}}
	+
	\frac{N^{n\xi}\Psi_1^{n-1}(s)}{N\eta}
	+
	\frac{N^{n\xi}\vert I\vert^2\Psi_1^{n-2}(s)}{N^{5/2}\eta^{3/2}}
\right).
\end{multline*}
Now, we are going to use our induction hypothesis, since $\mathds{E}\left[\det {P_s}_{(i,i_0)}^{(i,i_0)}\middle\vert\bm{\lambda}\right]$ corresponds to the Fermionic observable with a configuration of $n-2$ particles (we removed a particle in $i$ and in $i_0$), thus we suppose that we can consider $\theta_{n-2},\xi,\delta_1,\omega$ sufficiently small such that there exists a $\vartheta_{n-2}$ such that for any $i$ and $i_0$
\[
\mathds{E}\left[
	\det {P_s}_{(i,i_0)}^{(i,i_0)}
	\middle\vert
	\bm{\lambda}
\right]=\left(
	\frac{\sqrt{\vert I\vert}}{N}
\right)^{(n-2)}A_{n-2}
+
\mathcal{O}{}\left(
	\left(
		\frac{\sqrt{\vert I\vert}}{N}
	\right)^{n-2}
	N^{-\vartheta_{n-2}}
\right).
\]
Since we obtained the correct initial conditions earlier, we obtain that 
\begin{multline*}
\partial_s f_s^{\mathrm{Fer}}(\mathbf{k^m})\leqslant
-C\sum_{i=1}^n\frac{\Im m(z_{k_i})}{\eta}\left(
	f_s^{\mathrm{Fer}}(\mathbf{k^m})
	-
	\left(
		\frac{\sqrt{\vert I\vert}}{N}
	\right)^nA_n
\right)
\\
+
\mathcal{O}{}\left(
	\frac{N^{(n+1)\xi}\Psi_1^n(s)}{N\eta^2}
	+
	\frac{N^{n\xi}\vert I\vert\Psi_1^{n-1}(s)}{(N\eta)^{3/2}}
	+
	\frac{N^{n\xi}\Psi_1^{n-1}(s)}{N\eta}
	+
	\frac{N^{n\xi}\vert I\vert^2\Psi_1^{n-2}(s)}{N^{5/2}\eta^{3/2}}
	+
	\frac{N^{-\vartheta_{n-2}}}{\eta}
	\left(
		\frac{\sqrt{\vert I\vert}}{N}
	\right)^{n}
\right).
\end{multline*}
If we take $\theta_n>0$ and $\omega>0$ small enough, we can consider $\delta_2>0$ such that $\eta = s^2N^{-\delta_2}\gg N^{-2/3}$ and $z_k\in\mathcal{D}_\omega$ for $s\in[N^{-\theta_n},1]$. In this case, by the rigidity of eigenvalues from $\mathcal{A}_1$ we have that $\Im m(z_{k_i})\geqslant \sqrt{\eta}$. By an application of Gronwall's lemma, we have for any $D>0$,
\begin{multline*}
f_s^{\mathrm{Fer}}(\mathbf{k^m})
	=
\left(
	\frac{\sqrt{\vert I\vert}}{N}
\right)^nA_n
+
\mathcal{O}{}\left(
	\frac{N^{(n+1)\xi+3\delta_2/2}\Psi_1^n(s)}{Ns^{3}}
	+
	\frac{N^{n\xi+\delta_2}\vert I\vert\Psi_1^{n-1}(s)}{N^{3/2}s^2}
	+
	\frac{N^{n\xi+\delta_2/2}\Psi_1^{n-1}(s)}{N{s}}
	+
	\right.
	\\
	\left.
	+
	\frac{N^{n\xi+\delta_2}\vert I\vert^2\Psi_1^{n-2}(s)}{N^{5/2}s^2}
	+
	\frac{N^{-\vartheta_{n-2}+\delta_2/2}}{s}
	\left(
		\frac{\sqrt{\vert I\vert}}{N}
	\right)^{n}
	+N^{-D}
\right).
\end{multline*}
Thus for the error terms to be of order less than $(\sqrt{\vert I\vert}/N)^n$, we can consider $\theta_n(n,\vartheta),\xi(n,\vartheta)$ and $\delta_1(n,\vartheta)$ sufficiently small such that for $s\in [N^{-\theta_n},1]$ there exists $\vartheta_2(n,\xi, \delta_1, \delta_2, s)>0$ such that we obtain the result. 
Note that this choice is consistent by the additional assumption that  $\vert I\vert \ll \sqrt{N}$ for the fourth error term above.
\end{proof}

Now that we have the leading order for our Fermionic observable, we can obtain Theorem \ref{theo:mainresult} for the class of matrices given by $H_s$ for a class of $N^{-\theta}\ll s\ll 1$ for some $\theta>0$,
\begin{proposition}
There exists a (small) constant $\theta>0$ such that for any $s\in[N^{-\theta}, 1]$, Theorem \ref{theo:mainresult} holds for $H_s$.
\end{proposition}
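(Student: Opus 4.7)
The bridge from the evaluation \eqref{eq:resultinterm1} of the two-particle Fermionic observable to Theorem \ref{theo:mainresult} passes through the companion Bosonic observable. For a configuration with single particles at distinct sites $k,\ell$, the $2\times 2$ determinant and the $4\times 4$ Hafnian expand, respectively, as
\begin{equation*}
f^{\mathrm{Fer}}_s(k,\ell) = \EL{p_{kk} p_{\ell\ell} - p_{k\ell}^2}
\quad\text{and}\quad
f^{\mathrm{Bos}}_s(k,\ell) = \EL{p_{kk} p_{\ell\ell} + 2\, p_{k\ell}^2},
\end{equation*}
so the pair $(f^{\mathrm{Fer}}_s, f^{\mathrm{Bos}}_s)$ is an invertible linear image of $(\EL{p_{kk}p_{\ell\ell}}, \EL{p_{k\ell}^2})$. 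Once both observables are known to leading order, one recovers $\EL{p_{kk}p_{\ell\ell}}$ (the decorrelation) and $\EL{p_{k\ell}^2}$ (the variance of the overlap) separately.

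\textbf{Bosonic asymptotics.} The first step is to carry out, for $f^{\mathrm{Bos}}_s$, the exact analogue of the maximum-principle plus Gronwall argument that produced \eqref{eq:resultinterm1}. One selects a maximizer $\mathbf{k}^m$ of $f^{\mathrm{Bos}}_s$ over two-particle configurations, introduces the spectral resolution $\eta = s^2 N^{-\delta_2}\gg N^{-2/3}$, rewrites the jump-rate sum in \eqref{eq:discs2} in terms of $\Im m(z_{k^m_i})$ and matrix entries of the resolvent, and controls the off-diagonal quadratic term $\Im \sum_\ell p_{\ell k^m_j}^2/(N(\lambda_\ell - z_{k^m_i}))$ through quantum unique ergodicity (Proposition \ref{lem:queedge}) together with the eigenvector-resolvent decorrelation (Proposition \ref{prop:decorr}). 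The only structural difference with the Fermionic case is that the jump coefficients $2\xi_k(1+2\xi_\ell)$ in \eqref{eq:discs2} and the positive sign in the Hafnian expansion drive the centering toward $+2|I|/N^2$ rather than $-|I|/N^2$, so the resulting Gronwall inequality yields, in the same range of parameters as in \eqref{eq:resultinterm1},
\begin{equation*}
\sup_{k\neq\ell}\left| f^{\mathrm{Bos}}_s(k,\ell) - \frac{2|I|}{N^2}\right| = \O{N^{-\vartheta^{\mathrm{Bos}}}\,\frac{|I|}{N^2}}
\end{equation*}
for some positive $\vartheta^{\mathrm{Bos}}$.

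\textbf{Extraction and conclusion.} Combining the two asymptotics algebraically gives, conditionally on $\bm{\lambda} \in \mathcal{A}_2$ and for $s\in[N^{-\theta},1]$ with $\theta$ chosen sufficiently small,
\begin{equation*}
\EL{p_{k\ell}^2} = \tfrac{1}{3}\bigl(f^{\mathrm{Bos}}_s - f^{\mathrm{Fer}}_s\bigr) = \frac{|I|}{N^2}\bigl(1+\O{N^{-\kappa}}\bigr),
\end{equation*}
\begin{equation*}
\EL{p_{kk}p_{\ell\ell}} = \tfrac{1}{3}\bigl(2 f^{\mathrm{Fer}}_s + f^{\mathrm{Bos}}_s\bigr) = \O{N^{-\kappa}\,\frac{|I|}{N^2}},
\end{equation*}
for some $\kappa>0$. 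Taking total expectation (the complement of $\mathcal{A}_2$ contributes negligibly, by the overwhelming-probability bound of Lemma \ref{lem:goodset} combined with the crude $L^2$ control from the complete delocalization \eqref{eq:deloc}), multiplying the first identity by $N^2/|I|$ yields \eqref{eq:varres} with $\delta_2 = \kappa$, and multiplying the second by $N^2/(2|I|)$ yields \eqref{eq:correlres} with $\delta_1 = \kappa$. Finally, \eqref{eq:partialoverlap} is immediate from \eqref{eq:varres} by Markov's inequality applied to the nonnegative random variable $(N/\sqrt{|I|})^2 p_{k\ell}^2$.

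\textbf{Main obstacle.} All the new analytical content is concentrated in the Bosonic evaluation; once both observables are available, the passage to Theorem \ref{theo:mainresult} is purely algebraic through the $2\times 2$ linear system above. Within the Bosonic Gronwall step, the delicate point is identical to the one in the Fermionic proof: the off-diagonal sum $\sum_{\alpha\neq\beta\in I}\EL{u_k(\alpha)u_k(\beta)G_{\alpha\beta}(z)}$ must be controlled beyond what the isotropic local law alone produces, which is precisely the content of Proposition \ref{prop:decorr}; it is this decorrelation input that permits the optimal window $|I|\leq N^{1-\vartheta}$. Without it, one would lose a factor of $\sqrt{N}$ and be restricted to $|I|\ll\sqrt{N}$, as in Theorem \ref{theo:resultinterm} for the general $n$-particle case.
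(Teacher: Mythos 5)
Your proposal is correct and follows essentially the same route as the paper: the paper's own proof uses exactly the two-observable linear system (Fermionic determinant $= p_{kk}p_{\ell\ell} - p_{k\ell}^2$, Bosonic Hafnian $= p_{kk}p_{\ell\ell} + 2p_{k\ell}^2$), asserts the Bosonic asymptotics $2|I|/N^2$ "by a similar analysis," and extracts the two moments by elimination. Your write-up merely makes explicit the maximum-principle/Gronwall ingredients of the Bosonic step (including the role of Proposition \ref{prop:decorr} for the optimal window $|I|\leq N^{1-\vartheta}$) and the Markov deduction of \eqref{eq:partialoverlap}, which the paper leaves implicit.
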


\begin{proof}
By the analysis of the Fermionic observable in Theorem \ref{theo:resultinterm}, we know that there exists $\delta>0$ such that for any $D>0$,
\begin{equation}\label{eq:system1}
\mathds{E}\left[
	p_{kk}p_{\ell\ell}
\right]
-
\mathds{E}\left[
	p_{k\ell}^2
\right]
=
-\frac{\vert I\vert}{N^2}+\mathcal{O}\left(\frac{\vert I\vert}{N^2}N^{-\delta}+N^{-D}\right)
\end{equation}
where the expectation is taken over $H_0$ and $(B_{k\ell}(t))_{t\in[0,1]}$, the last error comes from bounding the expectation over $\mathcal{A}_1^c$.  Now, while we studied our Fermionic observable it was also possible to study the Bosonic observable from \cite{bourgade2018random} which in the case of two particles consists of, for $k$ and $\ell$ two distinct indices in $\unn{1}{N}$,
\[
f_s^{\mathrm{Bos}}(k,\ell)=\mathds{E}\left[
	p_{kk}p_{\ell\ell}
	+
	2p_{k\ell}^2
	\middle\vert
	\bm{\lambda}
\right]\quad \text{and}\quad
f_s^{\mathrm{Bos}}(k,k)=
\frac{1}{3}\mathds{E}\left[
	p_{kk}^2
	\middle\vert
	\bm{\lambda}
\right]
\]
and it follows the usual eigenvector moment flow so that by a similar analysis, we can obtain
\begin{equation}\label{eq:system2}
\mathds{E}\left[
p_{kk}p_{\ell\ell}
\right]
+
2\mathds{E}\left[
	p_{k\ell}^2
\right]=2\frac{\vert I\vert}{N^2}+\mathcal{O}\left(\frac{\vert I\vert}{N^2}N^{-\delta^\prime}\right)
\end{equation}
for some positive $\delta^\prime.$ So that, combining \eqref{eq:system1} and \eqref{eq:system2}, we obtain our result for the eigenvector of the matrix $H_s$.
\end{proof}

\section{Proof of Theorem \ref{theo:mainresult}}

We have now our result for the Gaussian divisible ensemble
\[
H_s(W)=\mathrm{e}^{-s/2}W+\sqrt{1-\mathrm{e}^{-s}}\mathrm{GOE}
\]
with $s$ a small parameter (in particular $s\leqslant N^{-\varepsilon}$ for some $\varepsilon$) and any $W$ being a generalized Wigner matrix. The point of this section is to remove the Gaussian term in order to obtain the result for our original matrix $W$. We do so by using a moment matching scheme and the density of the Gaussian divisible ensemble. The main point being that we can find a generalized Wigner matrix $W_0$ such that $H_s(W_0)$ has the same first few moments as $W$ and finish the proof by a Green function comparison theorem. We give this theorem now, a variant of \cite{knowles2013eigenvector}*{Theorem 1.10} which can be found in \cite{bourgade2017eigenvector}*{Theorem 5.2}. It needs as an assumption a level repulsion estimate. The following theorem states that the level repulsion estimate holds for generalized Wigner matrices, it can be found in \cites{erdos2015gap, bourgade2014edge}.
\begin{theorem}[\cites{erdos2015gap, bourgade2014edge}]
Consider $W$ a generalized Wigner matrix and $\lambda_1\leqslant\dots\leqslant \lambda_N$ its ordered eigenvalues. There exists $\alpha_0 >0$ such that for any $0<\alpha<\alpha_0$, there exists $\delta>0$ such that for any $E\in(-2,2)$ such that we have $\gamma_{k}\leqslant E\leqslant \gamma_{k+1}$ for some $k\in\unn{1}{N},$ we have
\[
\mathds{P}\left(
	\left\vert
		\{i,\,\lambda_i\in[E-N^{-2/3}\hat{k}^{-1/3},E+N^{-2/3}\hat{k}^{-1/3}]\}
	\right\vert
	\geqslant 2
\right)\leqslant N^{-\alpha-\delta}
\]
with $\hat{k}=\min(k,N-k+1).$
\end{theorem}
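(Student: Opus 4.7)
The plan is to follow the three-step strategy used throughout this paper. I would first establish the level repulsion estimate for a Gaussian divisible ensemble $H_t(W_0) = e^{-t/2}W_0 + \sqrt{1-e^{-t}}\,\mathrm{GOE}$ at a short time $t = N^{-1+\omega}$ for an appropriately chosen generalized Wigner initial condition $W_0$, and then transfer the bound to the original matrix $W$ via a Green function comparison with four-moment matching, in the spirit of \cite{knowles2013eigenvector}.

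For the Gaussian divisible step, I would condition on $W_0$ belonging to the rigidity event $\mathcal{A}_1(\xi)$ from Lemma \ref{lem:goodset}. On this event, by the Harish--Chandra--Itzykson--Zuber integral, the conditional eigenvalue density of $H_t(W_0)$ has an explicit form giving rise to a Pfaffian point process whose correlation kernel admits a Brezin--Hikami type contour integral representation. A saddle-point analysis of this kernel at $E$ (the bulk sine saddle for $\hat{k} \gg 1$, the Airy saddle for $\hat{k} = \O{1}$) produces an upper bound on the two-point correlation function of the form $\rho_2(x, y) \leqslant C \Delta^{-3}|x-y|$ for $x, y$ in a neighborhood of $E$ of size $\Delta = N^{-2/3}\hat{k}^{-1/3}$; the factor $|x-y|$ is the Vandermonde repulsion that survives the conditioning (exponent $\beta = 1$ for the symmetric class). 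Combined with the union bound
\[
\mathds{P}\bigl(|\{i:\lambda_i \in [E-\eta,E+\eta]\}| \geqslant 2\bigr) \leqslant \sum_{i \neq j} \mathds{P}\bigl(\lambda_i, \lambda_j \in [E-\eta,E+\eta]\bigr) \leqslant \int\!\!\int_{[E-\eta, E+\eta]^2} \rho_2(x,y) \, \D x\, \D y,
\]
the extra $|x - y|$ from level repulsion produces the polynomial improvement $N^{-\delta}$ over the naive Wegner estimate.

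For the transfer step, I would pick $W_0$ by a four-moment matching: the entries of $W_0$ match those of $(1-e^{-t})^{-1/2}(W - e^{-t/2}\,\mathrm{GOE}')$ up to the fourth moment, where $\mathrm{GOE}'$ is an auxiliary independent Gaussian matrix. The Green function comparison theorem, applied to a smoothed version of the functional
\[
\mathds{1}\bigl(|\{i:\lambda_i\in[E-\eta,E+\eta]\}| \geqslant 2\bigr) \leqslant \bigl(2\eta\,\Im\, s(E + \I\eta')\bigr)^2
\]
with mollification scale $\eta' \ll \eta$, transfers the level repulsion bound from $H_t(W_0)$ to $W$ up to an additive error $N^{-D}$ controlled by the moment matching.

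The main obstacle is obtaining uniformity across the spectrum in $\hat{k}^{-1/3}$: the bulk case ($\hat{k} \sim N$) follows from the standard sine-kernel saddle-point analysis of the Brezin--Hikami formula, but the edge case ($\hat{k} = \O{1}$) requires the full Airy-kernel edge universality machinery of \cite{bourgade2014edge}, where the saddle contours must be deformed carefully near the spectral edge and the Gaussian fluctuations of the extreme eigenvalues of $W_0$ enter as an additional source of error that needs tracking. A secondary difficulty is ensuring that the smoothed indicator used in the Green function comparison still captures the event of two eigenvalues in the window at the natural scale, which forces $\eta'$ to be taken only slightly smaller than $\eta$ and requires an optimal local law input.
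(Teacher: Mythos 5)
This theorem is not proved in the paper at all: it is quoted verbatim from \cites{erdos2015gap, bourgade2014edge} as an external input (and the remark immediately following it even notes that the cited references only technically cover the bulk regime and the edge regime $\hat{k}\leqslant N^{1/4}$, with the intermediate regime requiring minor modifications that are attributed to \cite{bourgade2017eigenvector}). So there is no proof in the paper to compare your attempt against, and a correct response here is simply to invoke the references.

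That said, your sketch contains a real gap as a stand-alone argument. The central step of your Gaussian-divisible computation --- asserting that the conditional eigenvalue density of $H_t(W_0)$ is an explicit Pfaffian point process with a Brezin--Hikami contour-integral kernel via the HCIZ integral --- is only valid for the \emph{Hermitian} class. For \emph{symmetric} matrices the HCIZ integral is over the orthogonal group, which has no closed determinantal or Pfaffian formula; the conditional eigenvalue density of $W_0$ plus an independent GOE is not given by any tractable explicit kernel. This is precisely the obstruction that forced the bulk universality program for $\beta=1$ to abandon explicit-kernel saddle-point analysis in favor of dynamical methods. Indeed, the level repulsion estimates in \cite{erdos2015gap} are obtained through parabolic regularity (De Giorgi--Nash--Moser) for the Dyson Brownian motion and log-Sobolev/Dirichlet-form techniques on the local equilibrium measure, not through Brezin--Hikami asymptotics; \cite{bourgade2014edge} similarly works with $\beta$-ensemble log-gas estimates near the edge. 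Your second step (a Green function comparison with a smoothed two-point functional and four-moment matching) is in the right spirit and does appear in the literature, but it cannot be launched from the explicit-kernel input you propose in the $\beta=1$ setting. If you intend your argument only for the Hermitian case, you should say so explicitly --- but then it does not cover the situation treated in the paper, which is primarily symmetric.
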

\begin{remark}
Note that this result has only been technically proved in the regime where either $\hat{k}\leqslant N^{1/4}$ for the edge case or in the bulk of the spectrum. But as remarked in \cite{bourgade2017eigenvector}, this estimate can be proved to any regime of $k$ with minor modifications in the proof. 
\end{remark}
This uniform level repulsion estimate for $W$ allows us to use the generalization of the following Green function comparison theorem
\begin{theorem}[\cite{bourgade2017eigenvector}]\label{theo:greencomp}
Consider $W$ and $W^\prime$ two generalized Wigner ensembles such that the first three moments of off-diagonal entries of $W$ and $W^\prime$ are equal and that the first two moments of diagonal entries of $W$ and $W^\prime$ are equal. Suppose also that there exists a positive $a$ such that for any $i\neq j$, 
\[
\left\vert
	\mathds{E}[w_{ij}^4]
	-
	\mathds{E}[{w^\prime_{ij}}^4]
\right\vert\leqslant N^{-2-a}.
\]
Let $\alpha>0$, then there exists $\varepsilon=\varepsilon(a)>0$ such that for any $k\in\mathbb{N}$ and any $\mathbf{q}_1,\dots,\mathbf{q}_k$ and any indices $j_1,\dots,j_k\in\unn{1}{N}$ we have
\[
\left(
	\mathds{E}^W-\mathds{E}^{W^\prime}
\right)O\left(
	N\scp{\mathbf{q}_1}{u_{j_1}}^2,\dots,N\scp{\mathbf{q}_k}{u_{j_k}}^2
\right)
=
\mathcal{O}(N^{-\varepsilon})
\]
for any smooth function $O$ with polynomial growth,
\[
\vert\partial^mO(x)\vert\leqslant C(1+\vert x\vert)^C
\]
for some $C$ and for any $m\in\mathbb{N}^k$ such that $\vert m\vert\leqslant 5$.
\end{theorem}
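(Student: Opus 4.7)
The plan is a Lindeberg-type replacement argument, following the strategy of \cite{knowles2013eigenvector} and its refinement in \cite{bourgade2017eigenvector}. The key analytic inputs are the isotropic local law \eqref{eq:isolocal}, complete delocalization \eqref{eq:deloc}, eigenvalue rigidity \eqref{eq:rigidity}, and the uniform level repulsion estimate stated just before the theorem.

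\textbf{Step 1 (Spectral reduction to the resolvent).} I would first express each eigenvector observable $N\scp{\mathbf{q}_a}{u_{j_a}}^2$ through the resolvent. Introduce a smooth indicator $\chi_{j_a}$ equal to $1$ on an interval of size $\hat{j}_a^{-1/3}N^{-2/3-\alpha'}$ around the classical location $\gamma_{j_a}$ and supported in a slightly larger such interval, with $0<\alpha'<\alpha$ to be chosen. By rigidity \eqref{eq:rigidity}, $\lambda_{j_a}$ lies inside $\{\chi_{j_a}=1\}$ with overwhelming probability, while by the level repulsion estimate no other eigenvalue lies in the support of $\chi_{j_a}$ with probability at least $1-N^{-\alpha-\delta}$. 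Hence
\[
N\scp{\mathbf{q}_a}{u_{j_a}}^2 \;=\; N\sum_k \chi_{j_a}(\lambda_k)\scp{\mathbf{q}_a}{u_k}^2 \;+\; (\text{error supported on a rare event}),
\]
and the sum on the right is, up to a negligible $N\eta$-correction using the Helffer--Sjöstrand or Poisson representation, a spectral average of $\Im\scp{\mathbf{q}_a}{G(E+\I\eta)\mathbf{q}_a}$ integrated against a smoothed version of $\chi_{j_a}$, taken at the microscopic scale $\eta = N^{-1-\varepsilon_0}$ for a small $\varepsilon_0>0$ to be tuned.

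\textbf{Step 2 (Lindeberg swapping).} Enumerate the $N(N+1)/2$ independent entries and interpolate between $W$ and $W'$ by swapping one entry at a time, producing an intermediate family $W^{(\gamma)}$. After composing $O$ with the spectral representation of Step 1, we obtain a smooth functional $F$ of the matrix entries, and need only control $\E{F(W)}-\E{F(W')}$ through the per-swap difference. The per-swap analysis uses the resolvent identity $\partial_{w_{ij}}G = -G(E^{(ij)}+E^{(ji)})G$, so that any derivative of $F$ of fixed order is a polynomial in entries of $G(E+\I\eta)$; by \eqref{eq:isolocal} and \eqref{eq:avelocal} these are bounded by $N^{C\varepsilon_0}$ with overwhelming probability.

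\textbf{Step 3 (Moment-matching cancellation).} For each swap of entry $(i,j)$, Taylor expand $F$ in $w_{ij}$ to fourth order. The first three moments of $w_{ij}$ match between $W$ and $W'$ off-diagonal (respectively the first two on the diagonal), so the contributions of orders $\leqslant 3$ cancel exactly in the difference. The fourth-order contribution is bounded by $\bigl\vert\E{w_{ij}^4}-\E{(w'_{ij})^4}\bigr\vert\cdot \Vert\partial_{w_{ij}}^4 F\Vert_\infty \leqslant N^{-2-a}\cdot N^{C\varepsilon_0}$. Summing over $\O{N^2}$ swaps yields a total fourth-order error of $\O{N^{-a+C\varepsilon_0}}$, which is $N^{-\varepsilon}$ once we pick $\varepsilon = a - C\varepsilon_0 > 0$. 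The fifth-order Taylor remainder is handled by the bounded-moment assumption on the entries of $W,W'$: it contributes $N^2\cdot N^{-5/2}\cdot N^{C\varepsilon_0}=\O{N^{-1/2+C\varepsilon_0}}$, negligible for small $\varepsilon_0$.

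\textbf{Main obstacle.} The delicate step is Step 1: one must reduce a single-eigenvector observable to a resolvent observable uniformly in the index $j_a$, including at the edge where $\hat{j}_a\ll N$. This is where the uniform level repulsion estimate is indispensable, since it guarantees that on a window of size $\hat{j}_a^{-1/3}N^{-2/3-\alpha'}$ there is exactly one eigenvalue (with probability $1-N^{-\alpha-\delta}$), so that the smoothed spectral sum cleanly isolates the contribution of $u_{j_a}$. A secondary subtlety is that $O$ and hence $F$ have only polynomial growth rather than bounded support; this is absorbed by the delocalization estimate \eqref{eq:deloc}, which ensures $\vert N\scp{\mathbf{q}_a}{u_{j_a}}^2\vert \leqslant N^\xi$ with overwhelming probability, so the polynomial growth of $O$ contributes only a factor $N^{C\xi}$, harmless once $\xi$ is taken small relative to $a$.
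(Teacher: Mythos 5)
This theorem is not proved in the paper; it is imported from \cite{bourgade2017eigenvector}*{Theorem 5.2}, itself a variant of \cite{knowles2013eigenvector}*{Theorem 1.10}, so there is no in-paper argument to compare against. Your blind sketch does follow the standard Lindeberg/Green-function-comparison template for eigenvector observables and gets the overall structure right: spectral isolation of a single eigenvalue via rigidity plus level repulsion, entry-by-entry swapping along an interpolating family $W^{(\gamma)}$, cancellation of low-order Taylor terms by moment matching, and a remainder bounded using the fourth-moment closeness and the bounded-moment hypothesis.

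There is, however, a genuine gap in the interplay between your Steps 1 and 2. The spectral reduction forces you to work at the scale $\eta = N^{-1-\varepsilon_0}$, but the local laws \eqref{eq:avelocal} and \eqref{eq:isolocal} that you invoke to control derivatives of $F$ are stated only on $\mathcal{D}_\omega$, i.e.\ for $\eta \geqslant N^{-1+\omega}$. Below the scale $N^{-1}$ the only deterministic bound is $\vert G_{\alpha\beta}(E+\I\eta)\vert \leqslant \eta^{-1} = N^{1+\varepsilon_0}$, which destroys the claimed $N^{C\varepsilon_0}$ per-swap control and makes the fourth- and fifth-order error estimates in Step 3 unjustified as written. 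The missing ingredient is a boosting lemma extending resolvent bounds slightly below scale $N^{-1}$, obtained in \cite{knowles2013eigenvector} from the monotonicity of $\eta \mapsto \eta\,\Im\scp{\mathbf{q}}{G(E+\I\eta)\mathbf{q}}$ combined with delocalization and eigenvalue counting in microscopic windows; you should state such a lemma, or an equivalent boosting argument, explicitly. A minor secondary omission: the complement of the level-repulsion event contributes an error of size $N^{-\alpha-\delta+C\xi}$ (after accounting for the polynomial growth of $O$ via delocalization), so the final $\varepsilon$ is constrained by both $a$ and the level-repulsion exponent, not by $a$ alone.
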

We can now give the proof of our main result.
\begin{proof}[Proof of Theorem \ref{theo:mainresult}]
We have proved that our result holds for any matrix $H_s(W_0)$ for any generalized Wigner matrix $W_0$ and $s\in[N^{-\theta},1]$ for some positive $\theta>0.$. Now, in order to use the Green function comparison theorem for eigenvector Theorem \ref{theo:greencomp}, we simply need to be able to construct a matrix $W_0$ such that the assumption of Theorem \ref{theo:greencomp} hold for the matrices $H_s(W_0)$ and $W$. Such a construction can be seen in \cite{erdos2011bernoulli}*{Lemma 3.4} and the result has been proved.
\end{proof}

\appendix
\section{Combinatorial proof of Theorem \ref{theo:emf}}\label{app:proof}
\sectionmark{Combinatorial proof of Theorem 3.1.10}
\tikzset{every loop/.style={min distance=10mm,in=60,out=120,looseness=10}}
\tikzset{every node/.style={draw,circle,fill=black, scale=.5}}
In this appendix, we give another proof of Theorem \ref{theo:emf} where we use the generator of the dynamics \eqref{eq:dysonvect} without any consideration of Grassmann variables. The proof is based on the expansion of the determinant and a careful bookkeeping on the action of the generator on permutations.
\begin{proof}[Proof of Theorem \ref{theo:emf}]
First define
\[
g(\bm{\xi})=\sum_{\sigma\in\mathfrak{S}_n}\epsilon(\sigma)\prod_{i=1}^np_{i_ki_{\sigma(k)}}\quad\text{so that}\quad f_s^{\mathrm{Fer}}(\bm{\xi})=\mathds{E}\left[g(\bm{\xi})\middle\vert\bm{\lambda}\right].
\]
We therefore need to show the following two equality
\begin{align}
\label{eq:nopart}X^2_{i_kj}g(\bm{\xi})&=2(g(\bm{\xi}^{i_kj})-g(\bm{\xi}))\quad\text{for}\quad k\in\{1\dots,n\},\,j\notin\{i_1,\dots,i_n\},\\
\label{eq:part}X^2_{i_ki_\ell}g(\bm{\xi})&=0\quad\text{for}\quad k,\ell\in\{1,\dots,n\}.
\end{align}
Part of the reasoning is done via induction. We first describe the proof for two particles. For simplicity, we describe one of the joint moments of the family $\left(p_{k\ell}\right)$ as a graph corresponding to a permutation in the determinant. For instance, for two particles, we have two distinct graphs.

\begin{center}
\begin{tabular}{|c|c|c|}
	\hline
		\begin{tikzpicture}
			\node[label={[font=\fontsize{17.28}{0}\selectfont]below:$i_1$}] (k) at (0,0) {};
			\node[label={[font=\fontsize{17.28}{0}\selectfont]below:$i_2$}] (l) at (1,0) {};
	
			\draw[-] (k) edge[loop above, thick] (k);
			\draw[-] (l) edge[loop above, thick] (l);
		\end{tikzpicture}		
	&
		\begin{tikzpicture}
			\node[label={[font=\fontsize{17.28}{0}\selectfont]below:$i_1$}] (k) at (0,0) {};
			\node[label={[font=\fontsize{17.28}{0}\selectfont]below:$i_2$}] (l) at (1,0) {};
			
			\draw[-] (k) edge[bend left, thick] (l);
			\draw[-] (k) edge[bend right, thick] (l);
		\end{tikzpicture}
		\\
		$p_{i_1i_1}p_{i_2i_2}$
		&
		$p_{i_1i_2}^2$
		\\{}&{}
		\\
		{\LARGE \ding{172}}
		&
		{\LARGE \ding{173}}
	\\
	\hline
\end{tabular}
\end{center}
Thus we can write our fermionic observable for these two particles as
\[
g_t(\bm{\xi})=p_{i_1i_1}p_{i_2i_2}-p_{i_1i_2}^2=\text{\ding{172}}-\text{\ding{173}}.
\]
Note that we have a sign difference between the terms because of the changing signature between these two permutations. Now, see that the generator $X$ operates on our family of overlaps $(p_{i_ki_\ell})$ in the following way
\begin{equation}\label{eq:algrel}
	\begin{gathered}
		X_{i_ki_\ell}p_{i_ki_k}=-2p_{i_ki_\ell}=-X_{i_ki_\ell}p_{i_\ell i_\ell},\,X_{i_ki_\ell}p_{i_ki_\ell}=p_{i_ki_k}-p_{i_\ell i_\ell},\\
		X_{i_ki_\ell}p_{i_kj}=-p_{i_\ell j},\,X_{i_ki_\ell}p_{i_\ell j}=p_{kj}.
	\end{gathered}
\end{equation}
With these algebraic relations, one can easily see that we have \eqref{eq:nopart} for $g_t(\bm{\xi})$. One can also easily deduce \eqref{eq:part} with these simple relations, however, in order to explain the more detailed approach of the case of $n$ particles, we disclose the proof in more details. We can first operate  $X$ on both of the terms in $g_t(\bm{\xi})$ and see that 
\begin{equation}\label{eq:2part}
X^2_{i_1i_2}\left(
	p_{i_1i_1}p_{i_2i_2}
\right)=
2\left(
	p_{i_1i_1}^2+p_{i_2i_2}^2-2(p_{i_1i_1}p_{i_2i_2}+2p_{i_1i_2}^2)
\right)=X_{i_1i_2}^2p_{i_1i_2}^2.
\end{equation}
First see that \eqref{eq:2part} gives us that $X_{i_1i_2}^2g_t(\bm{\xi})=0$ and the proof for two particles is clear. However, to introduce the notations we use in the case of $n$ particles, we write \eqref{eq:2part} as the following, using  the graphical representation from the previous table,
\[
X^2_{i_1i_2}\mding{172}=
2\left(
	p_{i_1i_1}^2+p_{i_2i_2}^2-2(\mding{172}+2\mding{173})
\right)=X^2_{i_1i_2}\mding{173}.
\]

Consider now the case of $n$ particles on $\{i_1,\dots,i_n\}$. By induction, we can only look at the permutations in the sum where either $\ell(i_k)+\ell(i_\ell)= n$ or $\ell(i_k)=\ell(i_\ell)=n$ where $\ell(j)$ is the length of the cycle containing $j$. Note that the second condition is there to take in account the fact that $i_k$ and $i_\ell$ can be in the same cycle.  Also see that by definition of $X_{i_ki_\ell}$, we are only interested in the sites $i_k,\,i_{\sigma(k)},\,i_{\sigma^{-1}(k)},\,i_\ell,\,i_{\sigma(\ell)}$ and $i_{\sigma^{-1}(\ell)}.$

First consider the permutations such that $\ell(i_k)$ or $\ell(i_\ell)$ is equal to $1$, such a permutation wil be represented by \ding{172} or \ding{173} in the following table.  

\begin{center}
\begin{tabular}{|c|c|c|c|}

	\hline
	\specialcell{
		\begin{tikzpicture}
			\node[label={[font=\fontsize{17.28}{0}\selectfont]below:$i_k$}] (1) at (0,0) {};
			\node[label={[font=\fontsize{17.28}{0}\selectfont, label distance=-.2cm]above:$i_{\sigma^{-1}(\ell)}$}] (2) at (1,0) {};
			\node[label={[font=\fontsize{17.28}{0}\selectfont]above:$i_\ell$}] (3) at (2,0) {};
			\node[label={[font=\fontsize{17.28}{0}\selectfont]above:$i_{\sigma(\ell)}$}] (4) at (3,0) {};
		
			\draw[-] (1) edge[loop above, thick] (1);
			\draw[-] (2) edge[bend left, thick] (3);
			\draw[-] (3) edge[bend left, thick] (4);
			\draw[-, red, dashed] (2) edge[out=310, in=220, thick] (4);
		\end{tikzpicture}
		\\
		\centering
		{\LARGE \ding{172}}
	}
	&
	\specialcell{
		\begin{tikzpicture}
			\node (1) at (4,0) {};	
			\node (2) at (5,0) {};
			\node (3) at (6,0) {};
			\node (4) at (7,0) {};
			\draw[-] (1) edge[bend left, thick] (2);
			\draw[-] (1) edge[bend right, thick] (4);
			\draw[-] (3) edge[loop above, thick] (3);
			\draw[-, red, dashed] (2) edge[out=340, in=200, thick] (4);
			\draw[-, opacity=0] (2) edge[out=310, in=220, thick] (4);

		\end{tikzpicture}
		\\
		\centering
		{\LARGE \ding{173}}
	}
	&
	\specialcell{
		\begin{tikzpicture}
			\node (1) at (8,0) {};
			\node (2) at (9,0) {};
			\node (3) at (10,0) {};
			\node (4) at (11,0) {};
			
			\draw[-] (1) edge[bend left, thick] (2);
			\draw[-] (1) edge[bend right, thick] (3);
			\draw[-] (3) edge[bend left, thick] (4);
			\draw[-, red, dashed] (2) edge[out = 60, in = 110, thick] (4);
			\draw[-, opacity=0] (2) edge[out=310, in=220, thick] (4);
			\draw[-, opacity=0] (3) edge[loop above, thick] (2);
		\end{tikzpicture}
		\\
		\centering
		{\LARGE \ding{174}}
	}
	&
	\specialcell{
		\begin{tikzpicture}
			\node (1) at (12,0) {};
			\node (2) at (13,0) {};
			\node (3) at (14,0) {};
			\node (4) at (15,0) {};
			
			\draw[-] (1) edge[out = 40, in = 140, thick] (4);
			\draw[-] (1) edge[out = 20, in = 160, thick] (3);
			\draw[-] (2) edge[bend right, thick] (3);
			\draw[-, red, dashed] (2) edge[out = 310, in = 220, thick] (4);
			\draw[-, opacity=0] (3) edge[loop above, thick] (3);
		\end{tikzpicture}
		\\
		\centering
		{\LARGE \ding{175}}
	}\\
	\hline
\end{tabular}
\end{center}
\paragraph{}These four graphs are the one involved when applying $X^2$ to \ding{172}. Note that while we display $i_{\sigma^{-1}(\ell)}$ and $i_{\sigma(\ell)}$ as distinct points, they could potentially be the same. The dashed red line represent the rest of the permutation, note also that we have two distinct cycles for the graphs \ding{172} and \ding{173} while there is a single cycle for the graphs \ding{174} and \ding{175} so that $\epsilon(\mding{172})=\epsilon(\mding{173})=-\epsilon(\mding{174})=-\epsilon(\mding{175})$.For simplicity, consider the notations
\begin{align*}
P^{(1)}_\ell &=p_{i_\ell i_\ell}p_{i_{\sigma^{-1}(\ell)}i_\ell}p_{i_\ell i_{\sigma(\ell)}},\\
P^{(1)}_k &=p_{i_k i_k}p_{i_{\sigma^{-1}(\ell)}i_k}p_{i_k i_{\sigma(\ell)}}.
\end{align*} 
Now, using the relations \eqref{eq:algrel} we obtain 
\[\begin{array}{ll}
X^2_{i_ki_\ell}\mding{172}
=
2\left(
	P^{(1)}_\ell+P^{(1)}_k
	-
	(2\mding{172}+2\mding{174}+2\mding{175})
\right),
&X^2_{i_ki_\ell}\mding{173}
=
2\left(
	P^{(1)}_\ell+P^{(1)}_k
	-
	(2\mding{173}+2\mding{174}+2\mding{175})
\right),\\
X^2_{i_ki_\ell}\mding{174}
=
2\left(
	P^{(1)}_\ell+P^{(1)}_k
	-
	(\mding{172}+\mding{173}+3\mding{174}+\mding{175})
\right),
&X^2_{i_ki_\ell}\mding{175}
=
2\left(
	P^{(1)}_\ell+P^{(1)}_k
	-
	(\mding{172}+\mding{173}+\mding{174}+3\mding{175})
\right).
\end{array}
\]

So that finally, taking in account the different signatures, we finally have
\[
X^2_{i_ki_\ell}\left(
	2\mding{172}+2\mding{173}-2\mding{174}-2\mding{175}
\right)
=0.
\]
Note that we have a coefficient of 2 in front of each graph because both $\sigma$ and $\sigma^{-1}$ follows the same graph.
Now, we consider permutations where $\ell(i_k)$ and $\ell(i_\ell)$ are greater than 1. Thus we consider such a permutation as the graph \ding{176} in the following table.
\begin{center}
\begin{tabular}{|c|c|c|}
	\hline
	\specialcell{
		\begin{tikzpicture}
			\node (1) at (0,0) {};
			\node[label={[font=\fontsize{14.4}{0}\selectfont]below:$i_k$}] (2) at (0.5,0) {};
			\node (3) at (1,0) {};
			\node (4) at (2,0) {};
		    \node[label={[font=\fontsize{14.4}{0}\selectfont]below:$i_\ell$}] (5) at (2.5,0) {};
		    \node (6) at (3,0) {};
		    
		    \draw[-] (1) edge[bend right, thick] (2);
		    \draw[-] (2) edge[bend right, thick] (3);
		    \draw[-] (4) edge[bend right, thick] (5);
		    \draw[-] (5) edge[bend right, thick] (6);
		    \draw[-,red,dashed] (3) edge[thick] (4);
		    \draw[-,red,dashed] (1) edge[out=90, in=90,thick] (6);
		    \draw[-,blue,dashed] (1) edge[out=70, in=110, thick] (4);
		    \draw[-,blue,dashed] (3) edge[out = 290, in=250, thick] (6);
		    \draw[-,black!40!green,dashed] (1) edge[out=50, in=130, thick] (3);
		    \draw[-,black!40!green,dashed] (4) edge[out=50, in=130, thick] (6);
		    \draw[-, opacity=0] (1) edge[out=300, in=240, thick] (5);

		\end{tikzpicture}
		\\
		\centering
		{\LARGE \ding{176}}
	}
	&
	\specialcell{
		\begin{tikzpicture}
			\node (1) at (4,0) {};
			\node (2) at (4.5,0) {};
			\node (3) at (5,0) {};
			\node (4) at (6,0) {};
		    \node (5) at (6.5,0) {};
		    \node (6) at (7,0) {};
		    
		    \draw[-] (1) edge[out=300, in=240, thick] (5);
		    \draw[-] (2) edge[out=320, in=220, thick] (4);
		    \draw[-] (2) edge[bend right, thick] (3);
		    \draw[-] (5) edge[bend right, thick] (6);
		    
		    \draw[-,red,dashed] (3) edge[thick] (4);
		    \draw[-,red,dashed] (1) edge[out=90, in=90,thick] (6);
		    \draw[-,blue,dashed] (1) edge[out=70, in=110, thick] (4);
		    \draw[-,blue,dashed] (3) edge[out = 70, in=110, thick] (6);
		    \draw[-,black!40!green,dashed] (1) edge[out=50, in=130, thick] (3);
		    \draw[-,black!40!green,dashed] (4) edge[out=50, in=130, thick] (6);
		\end{tikzpicture}
		\\
		\centering
		{\LARGE \ding{177}}
	}
	&
	\specialcell{
		\begin{tikzpicture}
			\node (1) at (8,0) {};
			\node (2) at (8.5,0) {};
			\node (3) at (9,0) {};
			\node (4) at (10,0) {};
		    \node (5) at (10.5,0) {};
		    \node (6) at (11,0) {};
		    
		    \draw[-] (1) edge[out=300, in=240, thick] (5);
		    \draw[-] (2) edge[bend right, thick] (3);
		    \draw[-] (4) edge[bend right, thick] (5);
		    \draw[-] (2) edge[out=60, in=120, thick] (6);
		    
		    \draw[-,red,dashed] (3) edge[thick] (4);
		    \draw[-,red,dashed] (1) edge[out=80, in=100,thick] (6);
		    \draw[-,black!40!green,dashed] (4) edge[out=50, in=130, thick] (6);
		    \draw[-,black!40!green,dashed] (1) edge[out=310, in=230, thick] (3);
		    \draw[-,blue,dashed] (1) edge[out=305, in=235, thick] (4);
		    \draw[-,blue,dashed] (3) edge[out = 55, in=125, thick] (6);
		\end{tikzpicture}
		\\
		\centering
		{\LARGE \ding{178}}
	}
	\\
	\hline
	\specialcell{
		\begin{tikzpicture}
			\node (1) at (0,0) {};
			\node (2) at (0.5,0) {};
			\node (3) at (1,0) {};
			\node (4) at (2,0) {};
		    \node (5) at (2.5,0) {};
		    \node (6) at (3,0) {};
		    
		    \draw[-] (1) edge[bend right, thick] (2);
		    \draw[-] (2) edge[bend right, thick] (4);
		    \draw[-] (3) edge[bend left, thick] (5);
		    \draw[-] (5) edge[bend right, thick] (6);
		    
		    \draw[-,red,dashed] (3) edge[thick] (4);
		    \draw[-,red,dashed] (1) edge[out=90, in=90,thick] (6);
		    \draw[-,blue,dashed] (3) edge[out=70, in=110, thick] (6);
		    \draw[-,blue,dashed] (1) edge[out = 290, in=250, thick] (4);
		    \draw[-,black!40!green,dashed] (1) edge[out=50, in=130, thick] (3);
		    \draw[-,black!40!green,dashed] (4) edge[out=310, in=230, thick] (6);
		    \draw[-, opacity=0] (1) edge[out=300, in=240, thick] (5);
		\end{tikzpicture}
		\\
		\centering
		{\LARGE \ding{179}}
	}
	&
	\specialcell{
		\begin{tikzpicture}
			\node (1) at (4,0) {};
			\node (2) at (4.5,0) {};
			\node (3) at (5,0) {};
			\node (4) at (6,0) {};
		    \node (5) at (6.5,0) {};
		    \node (6) at (7,0) {};
		    		    
		    \draw[-] (1) edge[bend right, thick] (2);
		    \draw[-] (2) edge[out=300, in=240, thick] (6);
		    \draw[-] (3) edge[bend right, thick] (5);
		    \draw[-] (5) edge[bend right, thick] (4);
		    
		    \draw[-,red,dashed] (3) edge[thick] (4);
		    \draw[-,red,dashed] (1) edge[out=90, in=90,thick] (6);
		    \draw[-,blue,dashed] (1) edge[out=70, in=110, thick] (4);
		    \draw[-,blue,dashed] (3) edge[out = 310, in=230, thick] (6);
		    \draw[-,black!40!green,dashed] (1) edge[out=50, in=130, thick] (3);
		    \draw[-,black!40!green,dashed] (4) edge[out=50, in=130, thick] (6);
		\end{tikzpicture}
		\\
		\centering
		{\LARGE \ding{180}}
	}
	&
	\specialcell{
		\begin{tikzpicture}
			\node (1) at (8,0) {};
			\node (2) at (8.5,0) {};
			\node (3) at (9,0) {};
			\node (4) at (10,0) {};
		    \node (5) at (10.5,0) {};
		    \node (6) at (11,0) {};
		    
		    \draw[-] (1) edge[out=300, in=240, thick] (5);
		    \draw[-] (2) edge[out=60, in=120, thick] (6);
		    \draw[-] (3) edge[out=320, in=220, thick] (5);
		    \draw[-] (2) edge[out=40, in=140, thick] (4);
		    
		    \draw[-,red,dashed] (3) edge[thick] (4);
		    \draw[-,red,dashed] (1) edge[out=90, in=90,thick] (6);
		    \draw[-,black!40!green,dashed] (4) edge[out=50, in=130, thick] (6);
		    \draw[-,black!40!green,dashed] (1) edge[out=310, in=230, thick] (3);
		    \draw[-,blue,dashed] (1) edge[out=305, in=235, thick] (4);
		    \draw[-,blue,dashed] (3) edge[out = 55, in=125, thick] (6);
		\end{tikzpicture}
		\\
		\centering
		{\LARGE \ding{181}}
	}\\
	\hline
\end{tabular}
\end{center}
\paragraph{}In this table, we represented all the permutations which are relevant when applying $X_{i_ki_\ell}$ to a general permutation of type \ding{176}. The different colors explains the different behavior of the permutation on the rest of the sites that are not seen by the operator $X$ but are relevant when counting the signatures on the different graphs. We first introduce the following notations as earlier 
\begin{align*}
P_\ell^{(2)}&=
p_{i_{\sigma^{-1}(k)}i_\ell}
p_{i_\ell i_{\sigma(k)}}
p_{i_{\sigma^{-1}(\ell)}i_\ell}
p_{i_\ell i_{\sigma(\ell)}},\\
P_k^{(2)}&=
p_{i_{\sigma^{-1}(k)}i_k}
p_{i_ki_{\sigma(k)}}
p_{i_{\sigma^{-1}(\ell)}i_k}
p_{i_k i_{\sigma(\ell)}}.
\end{align*}
Now, if we apply $X_{i_ki_\ell}$ to a permutation such that the cycle of $i_k$ and of $i_\ell$ are greater than 1 we obtain the following set of equations:
\[
\begin{array}{ll}
X^2_{i_ki_\ell}\mding{176}
=
2\left(
	P_k^{(2)}+P_\ell^{(2)}-(2\mding{176}+\mding{177}+\mding{178}+\mding{179}+\mding{180})
\right),
\\
X^2_{i_ki_\ell}\mding{177}
=
2\left(
	P_k^{(2)}+P_\ell^{(2)}-(2\mding{177}+\mding{176}+\mding{178}+\mding{179}+\mding{181})
\right),
\\
X^2_{i_ki_\ell}\mding{178}
=
2\left(
	P_k^{(2)}+P_\ell^{(2)}-(2\mding{178}+\mding{176}+\mding{177}+\mding{180}+\mding{181})
\right),
\\
X^2_{i_ki_\ell}\mding{179}
=
2\left(
	P_k^{(2)}+P_\ell^{(2)}-(2\mding{179}+\mding{176}+\mding{177}+\mding{180}+\mding{181})
\right),
\\
X^2_{i_ki_\ell}\mding{180}
=
2\left(
	P_k^{(2)}+P_\ell^{(2)}-(2\mding{180}+\mding{176}+\mding{178}+\mding{179}+\mding{180})
\right),
\\
X^2_{i_ki_\ell}\mding{181}
=
2\left(
	P_k^{(2)}+P_\ell^{(2)}-(2\mding{181}+\mding{177}+\mding{178}+\mding{179}+\mding{181})
\right),
\end{array}
\]
Now, in order to put all these equations together, one needs to see the number of permutations following these graphs and their respective signature. Both of these values depend on the number of cycles, which is equal to 1 or 2 in these cases, of the permutations and thus depend on the corresponding color in the previous table. We finally have
\[\arraycolsep=10pt\def\arraystretch{1.4}
\begin{array}{ll}
\text{Green case:}&\epsilon(\mding{176})X^2_{i_ki_\ell}(2\mding{176}-\mding{177}-\mding{178}-\mding{179}-\mding{180}+2\mding{181})=0.
\\
\text{Red Case:}&\epsilon(\mding{177})X^2_{i_ki_\ell}(2\mding{177}-\mding{176}-\mding{178}-\mding{179}+2\mding{180}-\mding{181})=0.
\\
\text{Blue Case:}&\epsilon(\mding{178})X^2_{i_ki_\ell}(2\mding{178}-\mding{176}-\mding{177}+2\mding{179}-\mding{180}-\mding{181})=0.
\end{array}
\]

Combining this result with the case where $\ell(i_k)$ or $\ell(i_\ell)$ is equal to 1 gives us the result for any permutation which finally gives
\[
X^2_{i_ki_\ell}g_t(\bm{\xi})=0.
\]
\end{proof}

\section{The Hermitian case}\label{app:hermi}
In this paper, we focused and developed the proof for symmetric random matrices, but the proof holds for Hermitian matrices as well. While the maximum principle technique can clearly be directly applied to the Hermitian case, we focused here in the definition of the Fermionic observable for the Hermitian Dyson Brownian motion. The Dyson vector flow in this case has a different generator and it is not necessarily clear that the determinant is still the correct one. Indeed, the Bosonic observable has a different form for Hermitian matrices \cite{bourgade2018random}*{Appendix} since we obtain the permanent of a matrix instead of a Hafnian. We now give the Dyson flow of eigenvalues and eigenvectors for Hermitian matrices.

\begin{definition}\label{def:dysonh}
Let $B$ be a Hermitian $N\times N$ matrix such that $\Re B_{ij},\Im B_{ij}$ for $i<j$ and $B_{ii}/\sqrt{2}$ are standard independent brownian motions. The Hermitian Dyson Brownian motion is given by the stochastic differential equation 
\begin{equation}\label{eq:dysonh}
\D H_s=\frac{\D B_s}{\sqrt{2N}}-\frac{1}{2} H_s\D t.
\end{equation}
Besides, it induces the following dynamics on eigenvalues and eigenvectors,
\begin{align}
\mathrm{d}\lambda_k&=\frac{\mathrm{d}\widetilde{B}_{kk}}{\sqrt{2N}}+\left(
	\frac{1}{N}\sum_{\ell\neq k}\frac{1}{\lambda_k-\lambda_\ell}-\frac{\lambda_k}{2}
\right)\D s,\\
\mathrm{d}u_k&=\frac{1}{\sqrt{2N}}\sum_{\ell\neq k}\frac{\mathrm{d}\widetilde{B}_{k\ell}}{\lambda_k-\lambda_\ell}u_\ell-\frac{1}{2N}\sum_{\ell\neq k}\frac{\mathrm{d}s}{(\lambda_k-\lambda_\ell)^2}u_k\label{eq:dysonvecth}
\end{align}
where $\widetilde{B}$ is distributed as $B$.
\end{definition}
The generator for the Hermitian Dyson vector flow is also known and given in the following proposition.

\begin{proposition}[\cite{bourgade2017eigenvector}]\label{prop:generatorh}
The generator acting on smooth functions of the diffusion \eqref{eq:dysonvecth} is given by 
\begin{equation}
L_t = \frac{1}{2}\sum_{1\leqslant k<\ell\leqslant N}\frac{1}{N(\lambda_k-\lambda_\ell)^2}
\left(
	X_{k\ell}\overline{X}_{k\ell}+\overline{X}_{k\ell}X_{k\ell}
\right)
\end{equation}
with the operator $X_{k\ell}$ defined by
\begin{equation*}
X_{k\ell}=\sum_{\alpha=1}^N
\left(
	u_k(\alpha)\partial_{u_\ell(\alpha)}
	-
	\overline{u}_\ell(\alpha)\partial_{\overline{u}_\ell(\alpha)}
\right)
\quad
\text{and}
\quad
\overline{X}_{k\ell}=\sum_{\alpha=1}^N
\left(
	\overline{u}_k(\alpha)\partial_{\overline{u}_\ell(\alpha)}
	-
	{u}_\ell(\alpha)\partial_{{u}_\ell(\alpha)}
\right).
\end{equation*}
\end{proposition}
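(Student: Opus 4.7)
The plan is to derive the generator directly via Itô's formula applied to smooth functions of the eigenvector matrix, and then to match the resulting expression with the quadratic form in $X_{k\ell}$ and $\overline{X}_{k\ell}$. The key preliminary computation is the quadratic covariation structure of the coordinates $u_k(\alpha)$ under the noise driving \eqref{eq:dysonvecth}. Using the Hermitian symmetry $\widetilde{B}_{m\ell'}=\overline{\widetilde{B}_{\ell' m}}$, so that $\D\widetilde{B}_{k\ell}\D\widetilde{B}_{m\ell'}=2\,\delta_{k\ell'}\delta_{\ell m}\,\D s$ for $k\neq\ell$, $m\neq\ell'$, I obtain
\[
\langle \D u_k(\alpha),\D\overline{u_m(\beta)}\rangle
=\frac{\delta_{km}}{N}\sum_{\ell\neq k}\frac{u_\ell(\alpha)\overline{u_\ell(\beta)}}{(\lambda_k-\lambda_\ell)^2}\D s,
\qquad
\langle \D u_k(\alpha),\D u_m(\beta)\rangle
=-\frac{\mathds{1}_{k\neq m}}{N(\lambda_k-\lambda_m)^2}u_m(\alpha)u_k(\beta)\,\D s.
\]
The second identity is a genuine novelty of the Hermitian case: in the real symmetric setting the corresponding covariation picks up an extra $\delta_{km}$ piece, but here the $(dz)^2$-type term survives off the diagonal because $\widetilde{B}$ and its transpose are conjugates rather than equal.

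Next I would apply Itô's formula to $f(\mathbf{u},\overline{\mathbf{u}})$, reading off $L_t f$ as the drift of $\D f$. The contributions are: a first-order piece $-\frac{1}{2N}\sum_k\sum_{\ell\neq k}(\lambda_k-\lambda_\ell)^{-2}\sum_\alpha u_k(\alpha)\partial_{u_k(\alpha)}f$ plus its conjugate coming from the explicit drift of \eqref{eq:dysonvecth}; a pure $\partial_u\partial_u$ piece and its conjugate coming from $\langle \D u,\D u\rangle$; and a $\partial_u\partial_{\overline u}$ piece coming from $\langle \D u,\D\overline u\rangle$. Re-indexing over unordered pairs $\{k,\ell\}$ produces
\[
L_t f=\sum_{k<\ell}\frac{1}{N(\lambda_k-\lambda_\ell)^2}\,\mathcal{D}_{k\ell}f
\]
for a second-order operator $\mathcal{D}_{k\ell}$ supported on the $(k,\ell)$-block of derivatives, which collects all of the above.

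The final step is an algebraic identity: I would expand $X_{k\ell}\overline{X}_{k\ell}+\overline{X}_{k\ell}X_{k\ell}$ using the Leibniz rule and verify $\mathcal{D}_{k\ell}=\tfrac12(X_{k\ell}\overline{X}_{k\ell}+\overline{X}_{k\ell}X_{k\ell})$. The pure second-order part of each product (where neither derivative lands on the coefficient of the other) reproduces the quadratic-variation terms above; the first-order commutator pieces (where $\partial_{u_\ell(\alpha)}$ lands on a $u_\ell$ or $\overline{u}_\ell$ coefficient, etc.) produce exactly the $\sum_\alpha u_k\partial_{u_k}+u_\ell\partial_{u_\ell}$ drift, together with its conjugate. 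Symmetrising over the two orderings is essential: individually $X_{k\ell}\overline{X}_{k\ell}$ and $\overline{X}_{k\ell}X_{k\ell}$ carry extra imaginary first-order pieces that cancel only in the anticommutator, reflecting the fact that the generator must be real and formally self-adjoint with respect to the Haar measure on $U(N)$.

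The main obstacle is precisely this bookkeeping of Leibniz contributions. One has to keep track of four differential operators (the $u$ and $\overline{u}$ parts of $X_{k\ell}$ and $\overline{X}_{k\ell}$) and expand sixteen compositions without losing the sign from $-\overline{u}_\ell\partial_{\overline{u}_k}$ vs.\ $u_k\partial_{u_\ell}$; the cancellations that send the stray first-order pieces into the Itô drift and that eliminate imaginary components rely on this symmetrisation. Once this verification is done the proposition follows from Itô's formula. An entirely analogous bookkeeping, without the anticommutator but with the identity $X_{k\ell}=-X_{\ell k}$, recovers Proposition \ref{prop:generator} in the real symmetric case, so the same scheme works uniformly in both settings.
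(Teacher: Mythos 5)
The paper itself offers no proof of this proposition---it is quoted from \cite{bourgade2017eigenvector}---so there is no internal argument to compare against; your It\^o-formula derivation is the standard route and is essentially how the cited result is obtained. Your covariation computations are correct: from $\D\widetilde{B}_{k\ell}\,\D\widetilde{B}_{\ell k}=2\,\D s$ and $\D\widetilde{B}_{k\ell}\,\D\overline{\widetilde{B}_{k\ell}}=2\,\D s$ one gets exactly the two brackets you state, the $\langle\D u,\D u\rangle$ bracket surviving only off the diagonal and the $\langle\D u,\D\overline{u}\rangle$ bracket only on it, which is indeed the structural difference from the symmetric case. The matching step also works as you describe: $X_{k\ell}\overline{X}_{k\ell}$ and $\overline{X}_{k\ell}X_{k\ell}$ have the same principal (second-order) part, which reproduces the quadratic variations after regrouping over unordered pairs, and they differ by the first-order commutator $[X_{k\ell},\overline{X}_{k\ell}]$; averaging the two orderings produces exactly the It\^o drift $-\tfrac{1}{2N}\sum_{\ell\neq k}(\lambda_k-\lambda_\ell)^{-2}\sum_\alpha\bigl(u_k(\alpha)\partial_{u_k(\alpha)}+\overline{u}_k(\alpha)\partial_{\overline{u}_k(\alpha)}\bigr)$. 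Your phrase ``extra imaginary first-order pieces'' is loose---the stray first-order terms of each ordering are not imaginary, they simply fail to be symmetric under conjugation and $k\leftrightarrow\ell$---but the point that only the anticommutator matches the drift is right. One caveat you should make explicit: the verification $\mathcal{D}_{k\ell}=\tfrac12\bigl(X_{k\ell}\overline{X}_{k\ell}+\overline{X}_{k\ell}X_{k\ell}\bigr)$ goes through for $X_{k\ell}=\sum_\alpha\bigl(u_k(\alpha)\partial_{u_\ell(\alpha)}-\overline{u}_\ell(\alpha)\partial_{\overline{u}_k(\alpha)}\bigr)$ and $\overline{X}_{k\ell}=\sum_\alpha\bigl(\overline{u}_k(\alpha)\partial_{\overline{u}_\ell(\alpha)}-u_\ell(\alpha)\partial_{u_k(\alpha)}\bigr)$; the formulas displayed in the proposition carry an index typo in the second derivative of each operator ($\partial_{\overline{u}_\ell(\alpha)}$ and $\partial_{u_\ell(\alpha)}$ should be $\partial_{\overline{u}_k(\alpha)}$ and $\partial_{u_k(\alpha)}$), and with the operators as literally printed the identity fails, so your bookkeeping implicitly uses the corrected form.
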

We see that the determinant of fluctuations is again an observable which follows the Fermionic eigenvector moment flow. In the Hermitian case, if one considers $(u_1,\dots,u_N)$ the eigenvectors associated to the eigenvalues $\lambda_1\leqslant \dots\leqslant\lambda_N$ of $H_s$ given by \eqref{eq:dysonh}, we define the fluctuations and mixed overlap by, for a family $( \mathbf{q}_\alpha)_{\alpha\in I}\in (\mathbb{R}^N)^{\vert I\vert}$,
\[
p_{kk}=\sum_{\alpha\in I}\vert \scp{\mathbf{q}_\alpha}{u_k}\vert^2-\frac{\vert I\vert}{2N}
\quad\text{and}\quad
p_{k\ell}=\sum_{\alpha\in I}\scp{\mathbf{q}_\alpha}{u_k}\scp{\mathbf{q}_\alpha}{\overline{u}_\ell}
\hspace{.5em}\text{for}\hspace{.5em} k\neq \ell.
\]
Note in particular that we have $p_{k\ell}\neq p_{\ell k}$ but $p_{k\ell}=\overline{p_{\ell k}}.$ Now, we define the same observable, for $\mathbf{k}=(k_1,\dots,k_n)$, with $k_i\neq k_j$,
\begin{equation}\label{eq:detherm}
f^{\mathrm{Fer}}_s(\mathbf{k})=\mathds{E}\left[
	\det P_s(\mathbf{k})
	\middle\vert
	\bm{\lambda}
\right]
\end{equation}
with $P_s(\mathbf{k})$ given by \eqref{eq:defmatrix}, note that it becomes a Hermitian matrix instead of a symmetric matrix in the symmetric case. We then have the same fact that $f_s^{\mathrm{Fer}}$ follows the eigenvector moment flow.
\begin{theorem}\label{theo:emfh}
Let $(\mathbf{u},\bm{\lambda})$ be the solution to the coupled flows as in Definition \ref{def:dysonh} and let $f_{s}^{\mathrm{Fer}}$ be as in \eqref{eq:detherm}, it satisfies the following equation, for $\mathbf{k}$ a pairwise distinct set of indices such that $\vert\mathbf{k}\vert=n$,
\begin{equation}\label{eq:emfh}
\partial_s f_s^{\mathrm{Fer}}(\mathbf{k})=\sum_{i=1}^n\sum_{\substack{\ell\in[\![1,N]\!]\\\ell\notin\{k_1,\dots,k_n\}}}
\frac{f_s^{\mathrm{Fer}}(\mathbf{k}^i(\ell))-f_s^{\mathrm{Fer}}(\mathbf{k})}{N(\lambda_{k_i}-\lambda_\ell)^2}.
\end{equation}
\end{theorem}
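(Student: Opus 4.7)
The plan is to adapt the Grassmann construction from Section \ref{sec:grassmann} to the Hermitian setting. The key change is that the entries of $P_s(\mathbf{k})$ are no longer symmetric but Hermitian, with $p_{k\ell}=\overline{p_{\ell k}}$ involving $u_k$ and $\overline{u}_\ell$ separately, so the Grassmann observable should pair each eigenvector with its conjugate. Concretely, keeping the four families of Grassmann variables $\{\eta_i,\xi_i,\varphi_i,\psi_i\}$ and the super-expectation \eqref{eq:superexpect} from Section \ref{sec:grassmann}, I would define
\[
g_s^{\mathrm{Fer,Her}}(\mathbf{k})\coloneqq \mathds{E}\Big[\prod_{i=1}^{n} \langle u_{k_i}^s\rangle_{\bm{\eta}+\I\sqrt{C_0}\bm{\varphi}}\,\langle \overline{u}_{k_i}^s\rangle_{\bm{\xi}+\I\sqrt{C_0}\bm{\psi}}\,\Big|\,\bm{\lambda}\Big],
\]
where $\langle v\rangle_{\bm{\eta}}=\sum_{\alpha}v(\alpha)\eta_\alpha$ as in \eqref{eq:projection}.

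The first step is to verify that $g_s^{\mathrm{Fer,Her}}$ satisfies \eqref{eq:emfh} under the Hermitian generator of Proposition \ref{prop:generatorh}. Using the basic algebraic identities
\[
X_{k\ell}\langle u_\ell\rangle_{\bm{\eta}}=\langle u_k\rangle_{\bm{\eta}},\quad X_{k\ell}\langle \overline{u}_k\rangle_{\bm{\xi}}=-\langle \overline{u}_\ell\rangle_{\bm{\xi}},\quad X_{k\ell}\langle u_k\rangle_{\bm{\eta}}=X_{k\ell}\langle \overline{u}_\ell\rangle_{\bm{\xi}}=0,
\]
and the conjugate identities for $\overline{X}_{k\ell}$, one checks by direct computation that, for any $k_i$ in the configuration and any $\ell\notin\{k_1,\dots,k_n\}$,
\[
\tfrac{1}{2}\bigl(X_{k_i\ell}\overline{X}_{k_i\ell}+\overline{X}_{k_i\ell}X_{k_i\ell}\bigr)\bigl(\langle u_{k_i}\rangle_{\bm{\eta}}\langle \overline{u}_{k_i}\rangle_{\bm{\xi}}\bigr)=\langle u_\ell\rangle_{\bm{\eta}}\langle\overline{u}_\ell\rangle_{\bm{\xi}}-\langle u_{k_i}\rangle_{\bm{\eta}}\langle\overline{u}_{k_i}\rangle_{\bm{\xi}},
\]
with analogous identities for the $(\bm{\varphi},\bm{\psi})$ part by linearity. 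This produces exactly the jump term in \eqref{eq:emfh}.

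The key obstacle, as in Lemma \ref{lem:emf}, is to show that the ``internal'' terms with both indices in $\{k_1,\dots,k_n\}$ vanish. In the symmetric case this followed from $X_{k_ik_j}^2$ producing a square of a single Grassmann projection. Here the computation is slightly more delicate because the generator is $\frac{1}{2}(X_{k_ik_j}\overline{X}_{k_ik_j}+\overline{X}_{k_ik_j}X_{k_ik_j})$ and acts on products of four projections $\langle u_{k_i}\rangle_{\bm{\eta}}\langle \overline{u}_{k_i}\rangle_{\bm{\xi}}\langle u_{k_j}\rangle_{\bm{\eta}}\langle \overline{u}_{k_j}\rangle_{\bm{\xi}}$. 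Applying $X_{k_ik_j}$ by the Leibniz rule produces two surviving terms, each of which either has a repeated $\langle u_{k_i}\rangle_{\bm{\eta}}$ or a repeated $\langle\overline{u}_{k_j}\rangle_{\bm{\xi}}$, which is zero by the anticommutativity $\eta_\alpha^2=\xi_\alpha^2=0$. The same vanishing holds for $\overline{X}_{k_ik_j}$, and hence for the Hermitian generator; the computation is essentially the Hermitian analogue of the six-line cancellation in Lemma \ref{lem:emf}.

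Finally, one applies the Fermionic Wick theorem (Lemma \ref{lem:wick}) with the covariance matrix $\Delta_{ij}=\sum_{\alpha\in I}\mathbf{q}_\alpha(i)\overline{\mathbf{q}_\alpha(j)}$ and $C_0=|I|/(2N)$; multilinearity of the determinant then identifies $\mathcal{E}^\Delta[g_s^{\mathrm{Fer,Her}}(\mathbf{k})]$ with $f_s^{\mathrm{Fer}}(\mathbf{k})=\mathds{E}[\det P_s(\mathbf{k})|\bm{\lambda}]$ exactly as in the proof after Lemma \ref{lem:emf}, now for the Hermitian matrix $P_s(\mathbf{k})$ with entries $p_{k_\alpha k_\beta}(s)=\sum_{\gamma\in I}\langle \mathbf{q}_\gamma,u_{k_\alpha}\rangle\langle\mathbf{q}_\gamma,\overline{u}_{k_\beta}\rangle-C_0\mathds{1}_{k_\alpha=k_\beta}$. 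Since the Hermitian $L_t$ commutes with $\mathds{E}[\,\cdot\,|\bm{\lambda}]$ in the same way, \eqref{eq:emfh} for $f_s^{\mathrm{Fer}}$ follows.
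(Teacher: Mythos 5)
Your proposal is correct and is precisely the Grassmann-variable argument that the paper alludes to at the end of Appendix B but does not write out: the observable $g_s^{\mathrm{Fer,Her}}$ pairing $u_{k_i}$ with $\overline{u}_{k_i}$, the computation of $\tfrac12(X_{k_i\ell}\overline X_{k_i\ell}+\overline X_{k_i\ell}X_{k_i\ell})$ on the projection products, and the Fermionic Wick theorem with $\Delta_{ij}=\sum_{\alpha\in I}\mathbf{q}_\alpha(i)\mathbf{q}_\alpha(j)$, $C_0=|I|/(2N)$, all reproduce \eqref{eq:emfh} exactly as in the symmetric case. One small observation worth keeping: in the Hermitian setting the vanishing of the internal terms $X_{k_ik_j}\overline X_{k_ik_j}+\overline X_{k_ik_j}X_{k_ik_j}$ already occurs at first order, since $X_{k_ik_j}\bigl(\langle u_{k_i}\rangle_{\bm\eta}\langle\overline u_{k_i}\rangle_{\bm\xi}\langle u_{k_j}\rangle_{\bm\eta}\langle\overline u_{k_j}\rangle_{\bm\xi}\bigr)=0$ directly from $\langle u_{k_i}\rangle_{\bm\eta}^2=\langle\overline u_{k_j}\rangle_{\bm\xi}^2=0$, so the cancellation is even simpler than the six-term one in Lemma \ref{lem:emf}.
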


The proof of Theorem \ref{theo:emfh} can also be done using Grassmann variables and a Fermionic Wick theorem as in Section \ref{sec:grassmann} or by carefully expanding the determinant and following the contribution of each permutation as in Appendix \ref{app:proof}. We do not develop the proof here as it is very similar but it is interesting to note that the determinant and the Fermionic eigenvector moment flow is universal regarding the symmetry of the system contrary to the Bosonic observable. Indeed, we saw the definition of the Bosonic observable via \eqref{eq:perfobs} for the symmetric Dyson flow, but the Bosonic observable in the Hermitian case is different. 

While we can also define it as a sum over (colored) graphs similarly to \eqref{eq:perfobs} another possible definition can be given in the following way: Let $\bm{\xi}$ be a configuration of $n$ particles, denote the position of the sites where each particle is situated as $(k_1,\dots,k_n)$  (note that we can have $k_i=k_j$ for some $i$'s and $j$'s) then we can define
\[
f^{\mathrm{Bos}}(\bm{\xi})=\frac{1}{\mathcal{M}(\bm{\xi})}\mathds{E}\left[
	\per P_s(\bm{\xi})
	\middle\vert
	\bm\lambda
\right]
\quad\text{with}\quad P_s(\bm{\xi})=\left(
p_{k_ik_j}
\right)_{1\leqslant i,j\leqslant n}
\quad\text{and}\quad
\mathcal{M}(\bm{\xi})=\prod_{i=1}^N \eta_i!
\]
where $\per$ denote the permanent of the matrix,
\[
\per A=\sum_{\sigma\in\mathfrak{S}_n} \prod_{i=1}^nA_{i,\sigma(i)}.
\]
\begin{bibdiv}
\begin{biblist}

\bib{Anderson}{article}{
   author={Anderson, P.},
   title={Absences of diffusion in certain random lattices},
   journal={Phys. Rev.},
   year={1958},
   pages={1492-1505}
}

\bib{bao2014universality}{article}{
   author={Bao, Z.},
   author={Pan, G.},
   author={Zhou, W.},
   title={Universality for a global property of the eigenvectors of {W}igner
   matrices},
   journal={J. Math. Phys.},
   volume={55},
   date={2014},
   issn={0022-2488},
}

\bib{benaych2012universality}{article}{
  title={A universality result for the global fluctuations of the eigenvectors of {W}igner matrices},
  author={Benaych-Georges, F.},
  journal={Random Matrices Theory Appl.},
  volume={1},
  number={04},
  year={2012},
  publisher={World Scientific}
}

\bib{benigni2017eigenvectors}{article}{
  title={Eigenvectors distribution and quantum unique ergodicity for deformed Wigner matrices},
  author={Benigni, L.},
  journal={arXiv preprint},
  year={2017}
}
\bib{bloemendal2014isotropic}{article}{
   author={Bloemendal, A.},
   author={Erd\H{o}s, L.},
   author={Knowles, A.},
   author={Yau, H.-T.},
   author={Yin, J.},
   title={Isotropic local laws for sample covariance and generalized {W}igner
   matrices},
   journal={Electron. J. Probab.},
   volume={19},
   date={2014},
   pages={no. 33, 53},
   issn={1083-6489},
}

\bib{bourgade2018extreme}{article}{
  title={Extreme gaps between eigenvalues of Wigner matrices},
  author={Bourgade, P.},
  journal={arXiv preprint},
  year={2018}
}

\bib{bourgade2014edge}{article}{
   author={Bourgade, P.},
   author={Erd\H{o}s, L.},
   author={Yau, H.-T.},
   title={Edge universality of beta ensembles},
   journal={Comm. Math. Phys.},
   volume={332},
   date={2014},
   number={1},
   pages={261--353},
   issn={0010-3616},
}
\bib{bourgade2017huang}{article}{
   author={Bourgade, P.},
   author={Huang, J.},
   author={Yau, H.-T.},
   title={Eigenvector statistics of sparse random matrices},
   journal={Electron. J. Probab.},
   volume={22},
   date={2017},
   pages={Paper No. 64, 38},
   issn={1083-6489},
}

\bib{bourgade2017eigenvector}{article}{
   author={Bourgade, P.},
   author={Yau, H.-T.},
   title={The eigenvector moment flow and local quantum unique ergodicity},
   journal={Comm. Math. Phys.},
   volume={350},
   date={2017},
   number={1},
   pages={231--278},
   issn={0010-3616},
}

\bib{bourgade2018random}{article}{
  title={Random band matrices in the delocalized phase, I: {Q}uantum unique ergodicity and universality},
  author={Bourgade, P.},
  author={Yau, H.-T.},
  author={Yin, J.},
  journal={arXiv preprint},
  year={2018}
}

\bib{cook}{article}{
	title={Private communication},
	author={Cook, N. A.},
	date={2018},
}

\bib{diaconis1987dozen}{article}{
   author={Diaconis, P.},
   author={Freedman, D.},
   title={A dozen de Finetti-style results in search of a theory},
   journal={Ann. Inst. H. Poincar\'{e} Probab. Statist.},
   volume={23},
   date={1987},
   number={2, suppl.},
   pages={397--423},
   issn={0246-0203},
}

\bib{donati2012truncations}{article}{
   author={Donati-Martin, C.},
   author={Rouault, A.},
   title={Truncations of {H}aar distributed matrices, traces and bivariate
   {B}rownian bridges},
   journal={Random Matrices Theory Appl.},
   volume={1},
   date={2012},
   number={1},
   issn={2010-3263},
}

\bib{erdos2010bulk}{article}{
   author={Erd\H{o}s, L.},
   author={P\'{e}ch\'{e}, S.},
   author={Ram\'{i}rez, J. A.},
   author={Schlein, B.},
   author={Yau, H.-T.},
   title={Bulk universality for Wigner matrices},
   journal={Comm. Pure Appl. Math.},
   volume={63},
   date={2010},
   number={7},
   pages={895--925},
   issn={0010-3640},
}

\bib{erdos2009semicircle}{article}{
   author={Erd\H{o}s, L.},
   author={Schlein, B.},
   author={Yau, H.-T.},
   title={Semicircle law on short scales and delocalization of eigenvectors
   for Wigner random matrices},
   journal={Ann. Probab.},
   volume={37},
   date={2009},
   number={3},
   pages={815--852},
   issn={0091-1798},
}

\bib{erdos2011universality}{article}{
   author={Erd\H{o}s, L.},
   author={Schlein, B.},
   author={Yau, H.-T.},
   title={Universality of random matrices and local relaxation flow},
   journal={Invent. Math.},
   volume={185},
   date={2011},
   number={1},
   pages={75--119},
   issn={0020-9910},
}

\bib{erdos2015gap}{article}{
   author={Erd\H{o}s, L.},
   author={Yau, H.-T.},
   title={Gap universality of generalized Wigner and $\beta$-ensembles},
   journal={J. Eur. Math. Soc. (JEMS)},
   volume={17},
   date={2015},
   number={8},
   pages={1927--2036},
   issn={1435-9855},
}

\bib{erdos2017dynamical}{book}{
   author={Erd\H{o}s, L.},
   author={Yau, H.-T.},
   title={A dynamical approach to random matrix theory},
   series={Courant Lecture Notes in Mathematics},
   volume={28},
   publisher={Courant Institute of Mathematical Sciences, New York; American
   Mathematical Society, Providence, RI},
   date={2017},
}

\bib{erdos2011bernoulli}{article}{
   author={Erd\H{o}s, L.},
   author={Yau, H.-T.},
   author={Yin, J.},
   title={Universality for generalized Wigner matrices with Bernoulli
   distribution},
   journal={J. Comb.},
   volume={2},
   date={2011},
   number={1},
   pages={15--81},
   issn={2156-3527},
}

\bib{erdos2012bulk}{article}{
   author={Erd\H{o}s, L.},
   author={Yau, H.-T.},
   author={Yin, J.},
   title={Bulk universality for generalized Wigner matrices},
   journal={Probab. Theory Related Fields},
   volume={154},
   date={2012},
   number={1-2},
   pages={341--407},
   issn={0178-8051},
}

\bib{erdos2012rigidity}{article}{
   author={Erd\H{o}s, L.},
   author={Yau, H.-T.},
   author={Yin, J.},
   title={Rigidity of eigenvalues of generalized Wigner matrices},
   journal={Adv. Math.},
   volume={229},
   date={2012},
   number={3},
   pages={1435--1515},
   issn={0001-8708},
}

\bib{jiang2006how}{article}{
   author={Jiang, T.},
   title={How many entries of a typical orthogonal matrix can be
   approximated by independent normals?},
   journal={Ann. Probab.},
   volume={34},
   date={2006},
   number={4},
   pages={1497--1529},
   issn={0091-1798},
}

\bib{knowles2013eigenvector}{article}{
   author={Knowles, A.},
   author={Yin, J.},
   title={Eigenvector distribution of Wigner matrices},
   journal={Probab. Theory Related Fields},
   volume={155},
   date={2013},
   number={3-4},
   pages={543--582},
   issn={0178-8051},
}

\bib{orourke2016eigenvectors}{article}{
   author={O'Rourke, S.},
   author={Vu, V.},
   author={Wang, K.},
   title={Eigenvectors of random matrices: a survey},
   journal={J. Combin. Theory Ser. A},
   volume={144},
   date={2016},
   pages={361--442},
   issn={0097-3165},
}

\bib{rudelson2013hanson}{article}{
   author={Rudelson, M.},
   author={Vershynin, R.},
   title={Hanson-Wright inequality and sub-Gaussian concentration},
   journal={Electron. Commun. Probab.},
   volume={18},
   date={2013},
   pages={no. 82, 9},
   issn={1083-589X},
}

\bib{rudelson2015delocalization}{article}{
   author={Rudelson, M.},
   author={Vershynin, R.},
   title={Delocalization of eigenvectors of random matrices with independent
   entries},
   journal={Duke Math. J.},
   volume={164},
   date={2015},
   number={13},
   pages={2507--2538},
}

\bib{rudelson2016nogaps}{article}{
   author={Rudelson, M.},
   author={Vershynin, R.},
   title={No-gaps delocalization for general random matrices},
   journal={Geom. Funct. Anal.},
   volume={26},
   date={2016},
   number={6},
   pages={1716--1776},
   issn={1016-443X},
}

\bib{rudnick1994behaviour}{article}{
   author={Rudnick, Z.},
   author={Sarnak, P.},
   title={The behaviour of eigenstates of arithmetic hyperbolic manifolds},
   journal={Comm. Math. Phys.},
   volume={161},
   date={1994},
   number={1},
   pages={195--213},
   issn={0010-3616},
}

\bib{silverstein1990weak}{article}{
   author={Silverstein, J. W.},
   title={Weak convergence of random functions defined by the eigenvectors
   of sample covariance matrices},
   journal={Ann. Probab.},
   volume={18},
   date={1990},
   number={3},
   pages={1174--1194},
   issn={0091-1798},
}

\bib{Tao2011random}{article}{
   author={Tao, T.},
   author={Vu, V.},
   title={Random matrices: universality of local eigenvalue statistics},
   journal={Acta Math.},
   volume={206},
   date={2011},
   number={1},
   pages={127--204},
   issn={0001-5962},
}

\bib{Tao2012random}{article}{
   author={Tao, T.},
   author={Vu, V.},
   title={Random matrices: universal properties of eigenvectors},
   journal={Random Matrices Theory Appl.},
   volume={1},
   date={2012},
   number={1},
   issn={2010-3263},
}

\bib{vu2015random}{article}{
   author={Vu, V.},
   author={Wang, K.},
   title={Random weighted projections, random quadratic forms and random
   eigenvectors},
   journal={Random Structures Algorithms},
   volume={47},
   date={2015},
   number={4},
   pages={792--821},
   issn={1042-9832},
}

\bib{zinn1989quantum}{book}{
   author={Zinn-Justin, J.},
   title={Quantum field theory and critical phenomena},
   series={International Series of Monographs on Physics},
   volume={77},
   note={Oxford Science Publications},
   publisher={The Clarendon Press, Oxford University Press, New York},
   date={1989},
}
\end{biblist}
\end{bibdiv}
\end{document}